\newcommand{\HH}{\mathcal H}
\newcommand{\mm}{\mathfrak{m}}
\renewcommand{\SS}{\mathbb{S}}
\newcommand{\R}{\mathbb{R}}
\newcommand{\Q}{\mathbb{Q}}
\newcommand{\N}{\mathbb{N}}
\newcommand{\FF}{\mathcal{F}}
\newcommand{\KK}{\mathcal{K}}
\newcommand{\CS}{\mathcal{S}}
\newcommand{\po}{\partial}
\newcommand{\wto}{\rightharpoonup}
\newcommand{\ve}{\varepsilon}
\newcommand{\la}{\langle}
\newcommand{\ra}{\rangle}
\newcommand{\loc}{{\text{loc}}}
\newcommand{\X}{\times}
\renewcommand{\d}{\delta}
\renewcommand{\l}{\lambda}
\renewcommand{\a}{\alpha}
\newcommand{\s}{\sigma}
\newcommand{\g}{\gamma}
\newcommand{\z}{\zeta}
\newcommand{\Id}{\operatorname{Id}}
\newcommand{\sol}{\operatorname{sol}}
\newcommand{\pot}{\operatorname{pot}}
\newcommand{\mv}[1]{\overline{#1}}
\newcommand{\Om}{\Omega}
\newcommand{\supp}{\text{\rm supp}\,}
\newcommand{\re}{\mathbb{R}}
\newcommand{\M}{{\mathcal M}}
\renewcommand{\div}{\text{\rm div}\,}
\renewcommand{\subset}{\subseteq}
\newcommand{\Nu}{\mathcal{V}}
\newcommand{\BUC}{\operatorname{BUC}}
\newcommand{\AP}{\operatorname{AP}}
\newcommand{\WAP}{\operatorname{WAP}}
\newcommand{\util}[1]{\underline{#1}}
\newcommand{\DD}{{\mathcal D}}
\newcommand{\BB}{{\mathcal B}}
\renewcommand{\AA}{{\mathcal A}}
\newcommand{\medint}{{\mbox{\vrule height3.5pt depth-2.8pt
          width4pt}\mkern-13mu\int\nolimits}}
\newcommand{\Medint}{\mkern12mu\mbox{\vrule height4pt
         depth-3.2pt
          width5pt}\mkern-16.5mu\int\nolimits}
\renewcommand{\supp}{\operatorname{supp}}
\theoremstyle{plain}
\newtheorem{theorem}{Theorem}[section]
\newtheorem{lemma}[theorem]{Lemma}
\theoremstyle{definition}
\newtheorem{definition}[theorem]{Definition}
\newtheorem{remark}[theorem]{Remark}
\theoremstyle{remark}
\numberwithin{equation}{section}
\begin{document}
\title[Stefan Problem]
{Homogenization of a generalized  Stefan Problem\\ in the context of ergodic algebras}
\author{Hermano Frid}
\address{Instituto de Matem\'atica Pura e Aplicada - IMPA\\ Estrada Dona Castorina, 110\\
Rio de Janeiro, RJ, 22460-320, Brazil}
\email{hermano@impa.br}

\author{Jean Silva}
\address{Instituto de Matem\'atica, Universidade Federal do Rio de Janeiro}
\email{jean@im.ufrj.br}

\author{Henrique Versieux}
\address{Instituto de Matem\'atica, Universidade Federal do Rio de Janeiro}
\email{henrique@im.ufrj.br}

\keywords{two-scale Young Measures, homogenization, algebra with mean
value, stefan problem}
\subjclass{Primary: 35B40, 35B35; Secondary: 35L65, 35K55}
\date{}

\begin{abstract}
We address the deterministic homogenization, in the general context of ergodic algebras, of a doubly nonlinear problem which
generalizes the well known Stefan model, and includes the classical porous medium equation. It may be represented by the differential  inclusion, for a real-valued function $u(x,t)$,  
$$
 \frac{\partial}{\partial t}\partial_u \Psi(x/\ve,x,u)-\nabla_x\cdot \nabla_\eta\psi( x/\ve,x,t,u,\nabla u) \ni  f(x/\ve,x,t, u), 
$$
on a bounded domain $\Om\subset \R^n$, $t\in(0,T)$, together with initial-boundary conditions, where  $\Psi(z,x,\cdot)$ is strictly convex and $\psi(z,x,t,u,\cdot)$ is a $C^1$ convex function, both with quadratic growth,
satisfying some additional technical hypotheses. As functions of the oscillatory variable, $\Psi(\cdot,x,u),\psi(\cdot,x,t,u,\eta)$ and  $f(\cdot,x,t,u)$ belong to the generalized Besicovitch space $\BB^2$ associated with an arbitrary ergodic algebra $\AA$. The periodic case was addressed by Visintin (2007), based on the two-scale convergence technique. Visintin's analysis for the periodic case relies heavily on the possibility of reducing two-scale convergence to the usual $L^2$ convergence in the cartesian product $\Pi\X\R^n$, where $\Pi$ is the periodic cell. This reduction is no longer possible in the case of a general ergodic algebra.   To overcome this difficulty, we make essential use of the concept of two-scale Young measures for algebras with mean value, associated with  bounded  sequences in $L^2$. 

\end{abstract}

\maketitle

\section{Introduction} \label{S:1}

In this paper we are concerned with the homogenization, in the context of a general ergodic algebra $\AA$ on $\R^n$, of the following initial-boundary value problem, on $\Om\X(0,T)$, with
$\Om\subset\R^n$ a bounded open set,
 \begin{equation}\label{e0.1}
 \begin{aligned}
 &\po_t w_\ve -\nabla\cdot \a(\frac{x}{\ve},x,t, u_\ve,\nabla u_\ve)=f(\frac{x}{\ve}, x, u_\ve),\\
 & w_\ve(x,t)\in \partial \Psi(\frac{x}{\ve},x,u_\ve),\qquad \text{a.e.\ in $\Om\X(0,T)$},\\
 &w_\ve(x,0)=w_0(\frac{x}{\ve},x),\qquad x\in\Om,\\
 & u_\ve=0,\qquad \text{in $\po\Om\X(0,T)$}.
 \end{aligned}
\end{equation}
Here $\Psi(z,x,\cdot)$ is a strictly convex function for a.e.\ $(z,x)\in\R^n\X\Om$, $\po\Psi(z,x,v)$ denotes the subdifferential of $\Psi$ with respect to $v$,   and for all $v\in\R$, $\Psi(\cdot,\cdot,v) \in \BB^2(\R^n;L(\Om))$, where $\BB^2$ denotes the generalized Besicovitch space associated with the ergodic algebra $\AA$, with topology provided by the semi-norm given as the mean value of the square of the absolute value. Also, $\a(z,x,t,v,\eta)=\nabla_\eta\psi(z,x,t,v,\eta)$, where $\psi(z,x,t,v,\cdot)$ is $C^1$ and convex in $\R^n$, for all $v\in\R$ and a.e.\ $(z,x,t)\in\R^n\X\Om\X(0,T)$. In particular,
$\a(z,x,t,v,\cdot)$ is a montone operator which is also assumed to satisfy a coercivity condition such as $\a(z,x,t,v,\eta)\cdot\eta\ge c_0|\eta|^2+ h(x,t)$, with $c_0>0$ and $h\in L^1(\Om\X(0,T))$, as usual. Further, $\a(\cdot,\cdot,\cdot,v,\eta) \in\BB^2(\R^n;L^2(\Om\X(0,T))$ and $f(\cdot,\cdot,v)\in\BB^2(\R^n; L^2(\Om))$, for all $v\in\R$ and all $\eta\in\R^n$. Additional technical conditions are assumed over $\Psi,\psi,\a,f$, see {\bf($\mathbf{\Psi1}$)}--{\bf($\mathbf{\Psi4}$)}, {\bf($\mathbf{\psi1}$)}--{\bf($\mathbf{\psi4}$)}, {\bf($\mathbf{\a1}$)}--{\bf($\mathbf{\a5}$)}, in Section~\ref{S:4}, and {\bf($\mathbf{f1}$)}--{\bf($\mathbf{f3}$)} in Section~\ref{S:5}, below. The corresponding case where $\AA$ is the algebra of the continuous periodic functions on $\R^n$ was addressed in the pioneering paper by Visintin,  \cite{MR2349874}, and the main point of the present paper is to extend the results in  \cite{MR2349874}  to the general case where $\AA$ is an arbitrary ergodic algebra. The latter is a concept introduced by Zhikov and Krivenko in \cite{MR704444}, which abstracts the properties satisfied by realizations of continuous functions in general compact topological spaces, endowed with a probability measure $\mu$, under the action of a continuous dynamical system for which $\mu$ is invariant. It includes the  almost periodic functions, introduced by Bohr,  as well as the more general weak almost periodic functions, introduced by Eberlein, which strictly include the Fourier-Stieltjes transforms (see, e.g.,  \cite{MR0068029}, \cite{MR0036455}, and the discussion in Section~\ref{S:2}, below). The main difficulty in achieving this extension is concerned with the most challenging nonlinearity in this problem, represented by the subdifferential of the strictly convex function $\Psi$.  In \cite{MR2349874}, Visintin uses the method of the two-scale convergence  introduced by Nguetseng in  \cite{MR990867}, and further developed by Allaire in \cite{MR1185639}. To treat the main nonlinearity, just mentioned, his analysis is based on the fact that, in the periodic case, it is possible to introduce certain maps $S_\ve:\Pi\X\R^n\to \R^n$, where $\Pi$ denotes the periodic cell in $\R^n$,  such that,  $S_\ve(y,x)\to x$, as $\ve\to0$,  in $\Pi\X\R^n$, uniformly with respect to $y\in\Pi$, and, for a large class of measurable functions $f:\R^n\X\R^n\to \R$, $f(y,x)$ periodic in $y$, with period cell $\Pi$,  one has $\int_{\R^n}f(x/\ve,x)\,dx=\int_{\Pi\X\R^n}f(y, S_\ve(y,x))\,dy\,dx$, which allows to reduce the two-scale convergence to the usual  convergence in  Lebesgue spaces. This reduction is no longer possible in the case where the oscillatory coefficients belong to general ergodic algebras. Instead of using only the two-scale convergence, we make essential use of the two-scale Young measures, 
introduced in the periodic case by E \cite{MR1151269}, whose basic existence result  for algebras with mean value was established in \cite{MR2498566} (see also \cite{MR2481061}).  The main existence result for two-scale convergence in algebras with mean value was obtained by Casado-Diaz and Gayte in \cite{MR1899822}, while the corresponding result for stochastic homogenization was obtained by Bourgeat, Mikeli\'c and Wright, in \cite{MR1301450}. 

In general, for problem \eqref{e0.1}, the a priori bounds available give boundedness of $u_\ve$ in $L^2(0,T;H_0^1(\Om))$ and of $w_\ve$ in $L^2(0,T;L^2(\Om))\cap H^1(0,T;H^{-1}(\Om))$, uniformly in $\ve>0$. We apply the theorem on the existence of two-scales Young measures for bounded sequences in $L^2$ for the sequence $u_\ve$. Since $\Psi(z,x,u)$ has quadratic growth in $u$, and the representation formula for the two-scale Young measures, in principle, only applies to functions with strictly sub-quadratic growth, the difficulty to be overcome here is to show that this representation formula may be particularly extended to apply to $\Psi$.  Once this is done, we then show that the Young measures reduce to 
Dirac measures concentrated at the weak limit $u(x,t)$ of some weakly converging subsequence of $u_\ve$, which gives the strong convergence of this subsequence of $u_\ve$
to $u$, in $L^p(\Om\X(0,T)$, for any $1\le p<2$.   
  
The general equation in \eqref{e0.1} generalizes classical models in porous medium equation (see, e.g., \cite{MR877986,MR2286292}) and the Stefan problem  
(see, e.g., \cite{MR0125341,MR0141895,MR0181840,MR0227625,MR0487015,MR687720,MR670933,MR1423808} and references therein). Homogenization of the  Stefan  problem, in the periodic setting, was first studied by Damlamian, see \cite{MR613313,MR633807}.  For a discussion about the physical background leading to \eqref{e0.1} we refer to the introduction of Visintin's paper cited above. Existence of solution for the general problem \eqref{e0.1} is also addressed in  \cite{MR2349874}. Uniqueness is not known in general. However, uniqueness is indeed known in the case where the equation in \eqref{e0.1} has the simpler form
\begin{equation}\label{e0.2}
\po_tw_\ve-\nabla\cdot (K(x/\ve,x)\cdot\nabla u_\ve)=0,
\end{equation}
where $K(x/\ve,x)$ is a measurable $n\X n$-matrix-valued function of $x\in\Om$, satisfying an ellipticity condition of the form
\begin{equation}\label{e0.3}
\g_0|\xi|^2\le [K(x/\ve,x)\cdot\xi]\cdot \xi\le \g_1|\xi|^2, \quad \text{for all $\xi\in\R^n$ and a.e.\ $x\in\Om$},
\end{equation}
for some constants $\g_1>\g_2>0$.  This includes the classical models for the porous medium equation and for the Stefan problem, mentioned above.  

Also, for \eqref{e0.2}, the homogenized problem can be explicitly obtained, and has the form
\begin{equation}\label{e0.4}
 \begin{aligned}
&\po_t w-\nabla\cdot (K_0(x)\cdot\nabla u)=0, \\
 & w(x,t)\in \partial \mv{\Psi}(x,u),\qquad \text{a.e.\ in $\Om\X(0,T)$},\\
 &w(x,0)=\mv{w_0}(x),\qquad x\in\Om,\\
 & u=0,\qquad \text{in $\po\Om\X(0,T)$},
 \end{aligned}
\end{equation}
where $\mv{\Psi}(x,u)$ and $\mv{w_0}(x)$ are the mean values of $\Psi(z,x,u)$ and $w_0(z,x)$ with respect to the variable $z$, and $K_0(x)$ is an $n\X n$-matrix valued measurable function satisfying an ellipticity condition similar to \eqref{e0.3}, and is easily obtained by following the classical procedure for homogenizing linear elliptic and parabolic equations. We remark that the uniqueness result also applies to the homogenized problem, so that, in this case we get a complete characterization for the homogenization limit.   We will discuss this point in detail in Section~\ref{S:6}.

In the general case, our main result gives an homogenized problem of the form 
\begin{equation}\label{e0.5}
\begin{aligned}
& w_t-\nabla\cdot \mv{q}= \mv{f}\left(x,t,u\right), \\
& w(x,t)\in \partial \mv{\Psi}\left(x,u(x,t)\right),\\
&\mv{q}\in\partial{\psi}_0\left (x,t,u(x,t), \nabla u(x,t)\right),\quad \text{for a.e. $(x,t)\in \Omega\X(0,T)$},\\
&w(x,0)=\mv w_0(x), \quad x\in \Omega,\quad u=0,\quad\text{in $\partial\Omega\X(0,T)$},
\end{aligned}
\end{equation}
where,  $\mv{\Psi}(x,u), \mv f,  \mv w_0,   \mv{\psi_0}$ are the mean values of $\Psi(\cdot,x,u), f(\cdot,x,t,u), w_0(\cdot,x), \psi_0(\cdot,x,t, u,\eta)$ for each $(x,t,u,\eta)$ in  
$\Omega\X(0,T)\X\R\X\R^n$,  $\partial \mv{\Psi}, \partial\mv{\psi_0}$ represent the subdifferentials of the convex functions $\mv{\Psi}(x, \cdot), \mv{\psi_0}(x,u,t,\cdot)$ for each $(x,t)\in \Omega\X(0,T)$, $u\in\R$, and  the function $\psi_0(x,t,u,\eta)$ is obtained through the minimization of the convex integral functional with integrand $\psi(z,x,t, u(x,t), -\nabla u(x,t)-\eta(z))$, where  $\eta(z)$ runs along the potential fields with mean zero in the compactification $\KK$ of $\R^n$ induced by the algebra $\AA$ (see Sections~\ref{S:4} and \ref{S:5}).   This is the analogue of the homogenized problem obtained by Visintin in the periodic case, in \cite{MR2349874}.

We remark that instead of the null Dirichlet boundary condition appearing in \eqref{e0.1}, we could as well, without any relevant change in our analysis, consider a more general homogeneous boundary condition, as in \cite{MR2349874}, say, imposing the null Dirichlet condition in a part of the boundary $\Gamma_0$, and the zero flux Neumann condition on the complementary part $\po\Om\setminus\Gamma_0$. Since here we are mainly concerned with the homogenization problem, just for the neatness of the exposition,  we restrict ourselves to the simplest case of the null Dirichlet condition.  

This paper is organized as follows. In Section~\ref{S:2}, we make a brief review of the main facts about ergodic algebras that are going to be used in the subsequent sections.
In Section~\ref{S:3}, we recall the main existence result for two-scale Young measures in algebras with mean value. We make the statement in a more general fashion, suitable for bounded sequences of functions in $L^p$, for any $1<p\le\infty$, and we also show how the proof given originally in \cite{MR2481061} can be easily adapted to handle this more general case. Also in the same section, we recall the definition of two-scale convergence and show how the general theorem on the existence of two-scale Young measures can be applied to imply the main existence result for two-scale convergence, in algebras with mean value, originally obtained in \cite{MR1899822}; we actually establish a more general new result of interest in its own ({\em cf.} Theorem~\ref{T:3.2}). In Section~\ref{S:4}, we prove the two main results that serve as essential tools in our homogenization analysis. The first one, Theorem~\ref{T:4.1}, shows how to handle the more challenging nonlinearity, represented by the subdifferential of the strictly convex function $\Psi(z,x,\cdot)$. The second one, Theorem~\ref{T:4.2}, shows how to address the second nonlinearity represented by the monotone operator $\a(z,x,t,u,\cdot)$. These results are combined in Section~\ref{S:5} to establish our main homogenization result for the problem \eqref{e0.1}. Finally, in Section~\ref{S:6}, we discuss the special case where $\a(z,x,t,u,\eta)=K(z,x,t,u)\cdot\eta$, for some positive definite matrix-valued function $K(z,x,t,u)$, and review, in particular, the case where $K$ only depends on $(z,x)$, for which uniqueness is available for both the homogenizing and the homogenized problems.

\section{Ergodic Algebras}\label{S:2}

In this section we recall the basic facts concerning algebras with mean values and, in particular, ergodic algebras. To begin with, we recall
the notion of mean value for functions defined in $\re^n$.

\begin{definition}\label{D:3} Let $g\in L_\loc^1(\R^n)$. A number $\overline{g}$ is called the {\em mean value of $g$} if
\begin{equation}\label{e1.2}
\lim_{\ve \to0} \int_Ag(\ve^{-1}x)\,dx=|A|\overline{g}
\end{equation}
for any Lebesgue measurable bounded set $A\subset\R^n$, where $|A|$ stands for the Lebesgue measure of $A$.
This is equivalent to say that $g(\ve^{-1}x)$ converges, in the
duality with $L^\infty$ and compactly supported functions, to the constant $\mv{g}$.
Also,
if $A_t:=\{x\in\R^n\,:\, t^{-1}x\in A\}$ for $t>0$ and $|A|\ne0$, \eqref{e1.2} may be written as
\begin{equation}\label{e1.3}
\lim_{t\to\infty}\frac1{t^n|A|}\int_{A_t}g(x)\,dx=\overline{g}.
\end{equation}
\end{definition}
Also, we will use the notation $\medint_{A}g\,dx$ for the average or mean value of $g$ on the measurable set $A$, and $\medint_{\R^n}g\,dx$ or $M(g)$ for $\overline{g}$, given by \eqref{e1.3}.

We recall now the definition of algebras with mean value introduced in \cite{MR704444}. As usual, we denote by $\BUC(\R^n)$ the
space of the bounded uniformly continuous real-valued functions in $\R^n$.

\begin{definition}\label{D:5} Let $\AA$ be a linear subspace of $\BUC(\R^n)$.
We say that $\AA$ is an {\em algebra with mean value} (or {\em
algebra w.m.v.}, in short), if the following conditions are
satisfied:
\begin{enumerate}
\item[(A)] If $f$ and $g$ belong to $\AA$, then the product $fg$ belongs to $\AA$.
\item[(B)] $\AA$ is invariant with respect to translations $\tau_y$ in $\R^n$.
\item[(C)] Any $f\in\AA$ possesses a mean value.
\item[(D)] $\AA$ is closed in $\BUC(\R^n)$ and contains the unity, i.e., the function $e(x):=1$ for $x\in\R^n$.
\end{enumerate}
\end{definition}


For the development of the homogenization theory in algebras
with mean value, as is done in \cite{MR704444,MR1329546} (see also \cite{MR1987520}),
in similarity with the case of almost periodic functions, one
introduces, for $1\leq p<\infty$, the space  $\BB^p$ as the abstract
completion of the algebra
$\AA$ with respect to the Besicovitch seminorm
$$
|f|_p^p:=\limsup_{L\to\infty}\frac{1}{(2L)^n}\int_{[-L,L]^n}|f|^p\,dx.
$$
Both the action of translations and the mean value
extend by continuity to $\BB^p$, and we will keep using the notation
$f(\cdot+y)$ and $M(f)$ even when $f\in\BB^p$ and $y\in\R^n$. Furthermore,
for $p>1$ the product in the algebra extends to a bilinear operator from $\BB^p\times\BB^q$ into $\BB^1$,
with $q$ equal to the dual exponent of $p$, satisfying
$$
|fg|_1\leq |f|_p|g|_q.
$$
In particular, the operator $M(fg)$ provides a nonnegative definite
bilinear form on $\BB^2$.

Since there is an obvious inclusion between this family of spaces,
we may define the space $\BB^\infty$ as follows:
$$
\BB^\infty=\{f\in \bigcap_{1\leq p<\infty}\BB^p\,:\,\sup_{1\le p<\infty}|f|_p<\infty\},
$$
We endow $\BB^\infty$  with the (semi)norm
$$
|f|_\infty:=\sup_{1\le p<\infty}|f|_p.
$$
Obviously the corresponding quotient spaces for all these spaces
(with respect to the null space of the seminorms) are Banach spaces,
and we get a Hilbert space in the case $p=2$. We denote by
$\overset{\BB^p}{=}$, the equivalence relation given by the equality
in the sense of the $\BB^p$ semi-norm.

\begin{remark}\label{R:0.1} A classical argument going back to Besicovitch~\cite{MR0068029} (see also \cite{MR1329546}, p.239) shows that the elements of $\BB^p$ can be represented
by functions in $L_{\loc}^p(\R^n)$, $1\le p<\infty$.
\end{remark}

We next recall a result established in \cite{MR2498566} which provides a connection between algebras with mean value and compactifications  of $\R^n$ endowed with a group of ``translations'' and an invariant probability measure.

\begin{theorem}[cf.\ \cite{MR2498566}]\label{T1}
For an algebra w.m.v.\ $\AA$, we have:
\begin{enumerate} 
\item[(i)] There exist a compact space
${\mathcal K}$ and an isometric isomorphism $i$ identifying $\AA$ with the
algebra $C({\mathcal K})$ of continuous functions on ${\mathcal K}$. By abuse of notation we will make the identification $i(f)\equiv f$, for all $f\in\AA$. 
\medskip

\item[(ii)] The translations $T(y):\R^n\to\R^n$, $T(y)x=x+y$,
extend to a group of homeomorphisms $T(y):{\mathcal K}\to{\mathcal K}$, $y\in\R^n$. The map $T:\R^n \X\KK\to\KK$, given by $T(y,z):=T(y)z$ is continuous. In other words, $T(y)$, $y\in\R^n$, is a ($n$-dimensional) dynamical system over $\KK$. 
\medskip

\item[(iii)] There exists a Radon probability measure ${\mathfrak m}$ on
${\mathcal K}$ which is invariant by the
group of transformations $T(y)$, $y\in\R^n$, such that
$$
\Medint_{\R^n}f\,dx=\int_{\mathcal K} f\,d\mathfrak m.
$$
\medskip
\item[(iv)]For $1\le p\le \infty$, the
Besicovitch space $\BB^p\big/\overset{\BB^p}{=}$ is
isometrically isomorphic to $L^p({\mathcal K}, {\mathfrak m})$.
\end{enumerate}
Actually, {\rm(i)} and {\rm(ii)}  hold independently of the mean value property {\rm(C)} in the definition of algebra w.m.v. 
\end{theorem}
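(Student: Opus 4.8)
The plan is to prove Theorem~\ref{T1} by realizing the algebra $\AA$ as an algebra of continuous functions on its maximal ideal space (Gelfand spectrum) and then transporting the structure of $\R^n$ and its mean value to this compactification. First I would observe that by hypotheses (A) and (D), $\AA$ is a commutative $C^*$-algebra with unit (the complexification, or more directly a real Banach algebra with the sup norm that is uniformly closed and closed under products). The Gelfand--Naimark theory then furnishes a compact Hausdorff space $\KK$, namely the space of nonzero multiplicative linear functionals (characters) on $\AA$ endowed with the weak-$*$ topology, together with an isometric isomorphism $i:\AA\to C(\KK)$ given by the Gelfand transform $i(f)(\chi)=\chi(f)$. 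That the transform is isometric uses that $\AA$ is uniformly closed and self-adjoint, so $\|i(f)\|_{C(\KK)}=\|f\|_{\BUC}$. This establishes (i), and I would note explicitly that this step used only (A) and (D), not the mean value property, consistent with the final sentence of the statement.

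For part (ii), each point $x\in\R^n$ defines an evaluation character $\delta_x\in\KK$ via $\delta_x(f)=f(x)$, giving a canonical map $\R^n\to\KK$; the translation invariance (B) means that $\tau_y$ acts on $\AA$ as an algebra automorphism, hence its adjoint acts on $\KK$ as a homeomorphism $T(y)$ satisfying $T(y)\delta_x=\delta_{x+y}$, and the group law $T(y)T(y')=T(y+y')$ follows from $\tau_y\tau_{y'}=\tau_{y+y'}$. The density of $\{\delta_x\}$ in $\KK$ (which holds because the only character vanishing on separating evaluations is trivial) lets me extend continuity: I would verify joint continuity of $T:\R^n\X\KK\to\KK$ by testing against generators $f\in\AA$, where $(y,z)\mapsto i(f)(T(y)z)$ corresponds to $(y,x)\mapsto f(x+y)$ and is continuous since $f\in\BUC(\R^n)$ guarantees uniform continuity of translation in $\BUC$. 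This again uses only (A), (B), (D).

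For part (iii), the mean value property (C) gives a positive linear functional $f\mapsto M(f)$ on $\AA$ with $M(e)=1$, which transports under $i$ to a positive unital linear functional on $C(\KK)$; by the Riesz representation theorem this is integration against a unique Radon probability measure $\mm$ on $\KK$, yielding the identity $\Medint_{\R^n}f\,dx=\int_\KK f\,d\mm$. Invariance of $\mm$ under $T(y)$ follows from the translation invariance of the mean value, $M(\tau_y f)=M(f)$, which I would establish from the definition of mean value via a change of variables in \eqref{e1.3}. Finally, for (iv) I would argue that $\AA$ is dense in each quotient $\BB^p/\overset{\BB^p}{=}$ by construction (it is the completion), that the Besicovitch seminorm $|f|_p$ coincides with $\|i(f)\|_{L^p(\KK,\mm)}$ for $f\in\AA$ by applying the mean-value identity of (iii) to $|f|^p\in\AA$, and that $C(\KK)$ is dense in $L^p(\KK,\mm)$; hence the completion $\BB^p/\overset{\BB^p}{=}$ is isometrically isomorphic to $L^p(\KK,\mm)$, with the case $p=\infty$ handled by the supremum of the $L^p$ norms matching the essential supremum.

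The main obstacle I anticipate is part (iv), specifically the identification of the Besicovitch seminorm with the $L^p(\KK,\mm)$ norm: the equality $|f|_p^p=\int_\KK|i(f)|^p\,d\mm$ requires that the limsup defining the Besicovitch seminorm be an honest limit equal to the mean value of $|f|^p$, which is exactly what (C) guarantees \emph{for elements of $\AA$}, since $|f|^p\in\AA$ by (A) and the completeness of $\AA$ under the sup norm. The delicate point is passing from $\AA$ to the completion: one must check that the resulting isometry is surjective onto $L^p(\KK,\mm)$, for which the density of $C(\KK)$ in $L^p$ and the uniqueness of completions suffice, but care is needed so that the $\BB^p$-equivalence relation corresponds precisely to $\mm$-a.e.\ equality. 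I would therefore devote most attention to verifying that $M(|f|^p)$ is genuinely the limit (not merely the limsup) and that this extends by continuity and density to give the full isometric isomorphism.
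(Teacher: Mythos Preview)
The paper does not supply its own proof of Theorem~\ref{T1}; it is stated with a reference to \cite{MR2498566} and used as background. Your outline via Gelfand theory is the standard argument for this kind of result and is essentially correct: (i) is the Gelfand--Naimark representation of a commutative unital uniformly closed algebra, (ii) comes from the adjoint action of the translation automorphisms on the spectrum together with uniform continuity, (iii) is the Riesz representation of the mean-value functional, and (iv) follows by density once the seminorm identity is checked on $\AA$.

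One small point worth tightening: to conclude that $|f|^p\in\AA$ for $f\in\AA$ you should make explicit that this follows from Weierstrass approximation of $t\mapsto|t|^p$ by polynomials on the range of $f$, combined with (A) and the sup-norm closure in (D); this avoids any appearance of circularity with (i). With that, the limsup in the Besicovitch seminorm is indeed a limit for $f\in\AA$ by (C), and the isometric extension to the completion goes through as you describe.
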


A group of unitary operators
$T(y):\BB^2\to\BB^2$ is then defined by setting $[T(y)f](\cdot) := f(T(y,\cdot))$. Since the elements of $\AA$ are uniformly continuous in $\R^n$,
the group $\{T(y)\}$ is strongly continuous, i.e. $T(y)f\to f$
in $\BB^2$ as $y\to 0$ for all $f\in\BB^2$. The notion of invariant
function is introduced then by simply
saying that a function in $\BB^2$ is {\em invariant} if
$T(y)f\overset{\BB^2}{=} f$, for all $y\in\R^n$. More clearly,
$f\in\BB^2$ is invariant if
\begin{equation}\label{e1.INV}
M\bigl(|T(y)f-f|^2\bigr)=0,\qquad \forall y\in\R^n.
\end{equation}
The concept of ergodic algebra is then introduced as follows.

\begin{definition}\label{D:6} An algebra $\AA$ w.m.v.\ is called {\em ergodic} if any invariant function
$f$ belonging to the corresponding space $\BB^2$ is equivalent (in $\BB^2$) to a constant.
\end{definition}

In \cite{MR1329546}  it is also given an alternative definition of ergodic
algebra which is shown therein to be equivalent to
Definition~\ref{D:6}, by using  von~Neumann mean ergodic theorem.
We state that as the following lemma, whose detailed proof may be
found in \cite{MR1329546}, p.247.

\begin{lemma}\label{L:1.6} Let $\AA$
be an algebra with mean value on $\R^n$. Then $\AA$ is ergodic
if and only if
\begin{equation}\label{eL1.6}
\lim_{t\to\infty}M_y\left(\bigl|\frac{1}{|B(0;t)|}
\int_{B(0;t)}f(x+y)\,dx-M(f)\bigr|^2\right)=0
\qquad\forall f\in\AA.
\end{equation}
\end{lemma}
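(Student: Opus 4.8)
The plan is to read \eqref{eL1.6} as the assertion that a family of spatial averages converges in $\BB^2$ to the mean value, and then to identify that limit by means of the von~Neumann mean ergodic theorem for the $\R^n$-action generated by the unitary group $\{T(y)\}$. Since $M_y(|h|^2)=|h|_2^2$ is the square of the $\BB^2$ (semi)norm and $\la f,g\ra:=M(fg)$ is the inner product on $\BB^2$ (with $\la f,1\ra=M(f)$ and $\|1\|_{\BB^2}^2=M(1)=1$ by Theorem~\ref{T1}(iii)), the quantity inside the limit in \eqref{eL1.6} is exactly $\|A_tf-M(f)\|_{\BB^2}^2$, where
\[
A_tf:=\frac{1}{|B(0;t)|}\int_{B(0;t)}T(x)f\,dx
\]
is the ball-averaging operator, understood as a Bochner integral in $\BB^2$ and using $[T(x)f](y)=f(x+y)$. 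Two elementary facts will be used repeatedly: $\|A_t\|\le1$ (each $T(x)$ is unitary and $A_t$ is an average), and $A_t$ fixes the constants (since $T(x)\,1=1$), so that $A_t(f-M(f))=A_tf-M(f)$. Thus \eqref{eL1.6} is equivalent to $A_tf\to M(f)$ in $\BB^2$ for every $f\in\AA$.

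The main step, and the one I expect to be the real work, is the mean ergodic theorem: for every $f\in\BB^2$ one has $A_tf\to Pf$ in $\BB^2$ as $t\to\infty$, where $P$ is the orthogonal projection onto the subspace $\CI$ of invariant functions. I would prove it by the standard F\o lner argument. Let $\NN:=\overline{\mathrm{span}}\{T(y)h-h:\ y\in\R^n,\ h\in\BB^2\}$; unitarity of $T(y)$ (so $T(y)^*=T(-y)$) gives $\NN^\perp=\CI$, hence the orthogonal decomposition $\BB^2=\CI\oplus\NN$. On $\CI$ one has $A_t=\Id$, so $A_tf=f=Pf$ there. On a generator $T(y_0)h-h$ of $\NN$, a change of variables turns $A_t(T(y_0)h-h)$ into $|B(0;t)|^{-1}$ times an integral of $T(x)h$ over the symmetric difference $B(y_0;t)\triangle B(0;t)$; since $|B(y_0;t)\triangle B(0;t)|/|B(0;t)|=O(|y_0|/t)\to0$, this tends to $0$. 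By linearity, the uniform bound $\|A_t\|\le1$, and density, $A_tf\to0$ for all $f\in\NN$. Decomposing $f=Pf+(f-Pf)$ then yields $A_tf\to Pf$ for every $f\in\BB^2$.

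Finally I would read off the equivalence from $A_tf\to Pf$. If $\AA$ is ergodic (Definition~\ref{D:6}), then $\CI$ consists of constants, so $P$ is the projection onto the line spanned by $1$, namely $Pf=\la f,1\ra\,1=M(f)$; hence $A_tf\to M(f)$ in $\BB^2$ for every $f\in\BB^2$, in particular for $f\in\AA$, which is \eqref{eL1.6}. Conversely, assume \eqref{eL1.6}, i.e.\ $A_tf\to M(f)$ for $f\in\AA$; comparing with the mean ergodic limit and using uniqueness of limits gives $Pf=M(f)\,1$ first for $f\in\AA$ and then, by density of $\AA$ in $\BB^2$ together with continuity of $P$ and $M$, for all $f\in\BB^2$. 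For an invariant $g\in\BB^2$ we have $Pg=g$, whence $g=M(g)\,1$ is constant, so $\AA$ is ergodic. The only delicate points are the F\o lner estimate on the symmetric difference of balls and the harmless density passage from $\AA$ to $\BB^2$, both of which are routine.
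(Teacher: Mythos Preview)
Your proposal is correct and follows exactly the route the paper indicates: the paper does not give its own proof but refers to \cite{MR1329546}, p.~247, noting only that the equivalence is obtained ``by using von~Neumann mean ergodic theorem''. Your argument---identifying the expression in \eqref{eL1.6} as $\|A_tf-M(f)\|_{\BB^2}^2$, proving $A_tf\to Pf$ via the F\o lner estimate and the decomposition $\BB^2=\CI\oplus\overline{\mathrm{span}}\{T(y)h-h\}$, and then reading off the equivalence---is precisely the standard proof of that theorem in this setting, so there is nothing to add.
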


We next recall some important facts in the theory of ergodic algebras. Let $\CS(\R^n)$ denote the Schwartz space of fast decreasing $C^\infty$ functions. Given $f\in L^\infty(\R^n)$, let us denote by $\hat f$ the distributional Fourier transform of $f$, defined by
$$
\la \hat f,\varphi\ra:=\la f, \hat\varphi\ra,\quad\text{where}\quad \hat \varphi (\xi):=\frac1{(2\pi)^{n/2}}\int_{\R^n} \varphi(x) e^{-\xi\cdot x}\,dx,
$$
for all $\varphi\in\CS(\R^n)$, the last identity being the usual definition of Fourier transform in $\CS(\R^n)$. 

Given an ergodic algebra $\AA$, let us denote 
\begin{equation}\label{eZ(A)}
Z(\AA):= \{f\in\AA\,:\,\text{ $\hat f$ has compact support in $\R^n$ and $0\notin \supp \hat f$} \}.
\end{equation}
The following is a fundamental result in the theory of ergodic algrebras by Zhikov and Krivenko \cite{MR704444}.

\begin{lemma}\label{L:ZK} For any ergodic algebra $\AA$, $Z(\AA)$ is dense in $V=\{ f\in\BB^2\,:\, M(f)=0\}$, with respect to   the  $\BB^2$-norm.
\end{lemma}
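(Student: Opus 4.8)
The plan is to show that every $f\in V$ can be approximated in the $\BB^2$-norm by functions of the form $T_\varphi g$, where $g\in\AA$ and $\varphi\in\CS(\R^n)$ is chosen so that $\hat\varphi$ is compactly supported and vanishes near the origin; such functions lie in $Z(\AA)$. Here $T_\varphi$ denotes the mollification operator $T_\varphi h:=\int_{\R^n}\varphi(y)\,T(y)h\,dy$, understood as a Bochner integral in the Hilbert space $\BB^2$. For $\varphi\in L^1(\R^n)$ this is a bounded operator. Two elementary facts are needed. First, if $g\in\AA$ then $T_\varphi g\in\AA$: the integral is a $\BUC$-limit of Riemann sums $\sum_i\varphi(y_i)\,T(y_i)g\,\Delta y_i$, each a finite combination of translates of $g$, so the limit belongs to $\AA$ by properties (B) and (D) of Definition~\ref{D:5}. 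Second, since $T_\varphi g=\tilde\varphi * g$ with $\tilde\varphi(y):=\varphi(-y)$, its distributional Fourier transform satisfies $\widehat{T_\varphi g}=(2\pi)^{n/2}\,\hat\varphi(-\cdot)\,\hat g$, so $\supp\widehat{T_\varphi g}\subseteq-\supp\hat\varphi$. Consequently, if $\hat\varphi$ is compactly supported with $0\notin\supp\hat\varphi$, then $T_\varphi g\in Z(\AA)$.

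The heart of the argument is a spectral truncation, and this is where ergodicity enters. The family $\{T(y)\}_{y\in\R^n}$ is a strongly continuous group of unitary operators on $\BB^2\cong L^2(\KK,\mm)$, so by the SNAG (Stone--Naimark--Ambrose--Godement) theorem it admits a projection-valued measure $E$ on $\R^n$ with $T(y)=\int_{\R^n}e^{iy\cdot\xi}\,dE(\xi)$; accordingly $T_\varphi=\int_{\R^n}m_\varphi(\xi)\,dE(\xi)$ with $m_\varphi(\xi)=\int_{\R^n}\varphi(y)e^{iy\cdot\xi}\,dy$. A function $f$ lies in the range of $E(\{0\})$ if and only if its spectral measure is concentrated at the origin, which by the computation $|T(y)f-f|_2^2=\int|e^{iy\cdot\xi}-1|^2\,d\mu_f(\xi)$ is equivalent to $T(y)f\overset{\BB^2}{=}f$ for all $y$, i.e.\ to $f$ being invariant in the sense of \eqref{e1.INV}. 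By Definition~\ref{D:6}, ergodicity forces the invariant functions to be exactly the constants, so $E(\{0\})$ is the orthogonal projection onto the constants and $E(\{0\})f$ equals the constant $M(f)$. In particular, for $f\in V$ we have $M(f)=0$, so the finite scalar measure $\mu_f:=\langle E(\cdot)f,f\rangle$ carries no mass at the origin.

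Given $f\in V$ and $\e>0$, I would then proceed as follows. Since $\mu_f(\{0\})=0$ and $\mu_f$ is finite, choose an annulus $K=\{\xi:\ r\le|\xi|\le R\}$ with $0<r<R<\infty$ such that $\mu_f(\R^n\setminus K)<(\e/3)^2$, and a smooth multiplier $m$ with $0\le m\le1$, $m\equiv1$ on $K$ and $\supp m$ a compact subset of $\R^n\setminus\{0\}$. Let $\varphi\in\CS(\R^n)$ be chosen so that $m_\varphi=m$; then $T_\varphi$ has operator norm at most $\sup|m|\le1$, and since $m\equiv1$ on $K$,
\[
|T_\varphi f-f|_2^2=\int_{\R^n}|m(\xi)-1|^2\,d\mu_f(\xi)\le\mu_f(\R^n\setminus K)<(\e/3)^2.
\]
Next pick $g\in\AA$ with $|g-f|_2<\e/3$, which is possible because $\AA$ is dense in $\BB^2$, and set $h:=T_\varphi g$. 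By the first paragraph $h\in Z(\AA)$, while
\[
|h-f|_2\le\|T_\varphi\|\,|g-f|_2+|T_\varphi f-f|_2<\tfrac\e3+\tfrac\e3<\e,
\]
which proves the density of $Z(\AA)$ in $V$.

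The main obstacle is the spectral truncation step: one must know that a mean-zero element of $\BB^2$ carries no spectral mass at the frequency origin, and this is precisely the point where ergodicity is indispensable, through the identification of the fixed space $\operatorname{range}E(\{0\})$ with the constants. Everything else is routine: the passage between the multiplier $m_\varphi$ and the distributional Fourier support of $T_\varphi g$ is a convolution computation, and the final approximation is a two-term estimate using the uniform bound $\|T_\varphi\|\le1$. An equivalent formulation avoids the explicit truncation by an orthogonality argument: if $h\in V$ satisfies $M(h\overline{g})=0$ for every $g\in Z(\AA)$, one shows $T_\varphi h=0$ for all admissible $\varphi$, whence the spectral measure of $h$ is concentrated at the origin and $h$ is invariant, so $h$ is constant and, being mean-zero, vanishes.
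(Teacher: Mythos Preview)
The paper does not supply its own proof of this lemma; it is stated as a result of Zhikov and Krivenko \cite{MR704444} and cited without argument. So there is nothing in the paper to compare against directly, and the relevant question is simply whether your argument is sound.

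Your proof is correct and is in fact the natural spectral-theoretic route to this result. The two ingredients are exactly the right ones: (i) mollification $T_\varphi g$ of an element $g\in\AA$ by $\varphi\in\CS(\R^n)$ with $\hat\varphi$ compactly supported away from $0$ lands in $Z(\AA)$, since $T_\varphi g$ stays in $\AA$ (closed under translations and uniform limits) while the convolution identity forces $\supp\widehat{T_\varphi g}\subset-\supp\hat\varphi$; and (ii) the SNAG spectral resolution of the strongly continuous unitary group $\{T(y)\}$ on $\BB^2$ identifies $T_\varphi$ with the spectral multiplier $m_\varphi$, so that $|T_\varphi f-f|_2^2=\int|m_\varphi-1|^2\,d\mu_f$. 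The place where ergodicity enters---showing that $E(\{0\})$ projects onto the constants, hence $\mu_f(\{0\})=0$ for $f\in V$---is identified precisely, and the final two-term estimate is clean. The alternative orthogonality formulation you sketch at the end is also valid and is closer in spirit to how such density results are often phrased in \cite{MR1329546}.

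One very minor point: when you invoke the BUC-Bochner integral to conclude $T_\varphi g\in\AA$, you are implicitly using that $y\mapsto T(y)g$ is continuous $\R^n\to\BUC(\R^n)$, which holds exactly because $g$ is uniformly continuous; this is worth saying explicitly, since it is what makes the Riemann-sum approximation converge in the $\sup$-norm rather than merely pointwise.
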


The following result from  \cite{MR2498566} is an easy consequence of Lemma~\ref{L:ZK}, and will be used later, in this paper, in our  analysis of the homogenization problem.
\begin{lemma}\label{Lapla}
Let $\AA$ be an ergodic algebra on $\R^n$ and $h\in \BB^2$ such that
$\overline{h\,\Delta f}=0$, for all $f\in \AA$ such that $\Delta f\in \AA$. Then,
$h$ is $\BB^2$-equivalent to a constant.
\end{lemma}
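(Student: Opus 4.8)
The plan is to deduce the conclusion from Lemma~\ref{L:ZK} by showing that the hypothesis $\overline{h\,\Delta f}=0$ forces $h$ to have vanishing ``Fourier coefficients'' along all nonzero frequencies, hence $h-M(h)$ is orthogonal to the dense set $Z(\AA)$ and must vanish in $\BB^2$. First I would recall that, by the density statement in Lemma~\ref{L:ZK}, it suffices to prove that $M(h\,g)=M(h)M(g)=0$ for every $g\in Z(\AA)$, since then $h-M(h)$ is $\BB^2$-orthogonal to the closure $V$ of $Z(\AA)$; but $h-M(h)$ itself lies in $V$, so $|h-M(h)|_2^2=M\bigl(|h-M(h)|^2\bigr)=0$, which is exactly the assertion that $h$ is $\BB^2$-equivalent to the constant $M(h)$.

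The key reduction, then, is to pass from the hypothesis, phrased in terms of the Laplacian $\Delta$, to orthogonality against each fixed $g\in Z(\AA)$. The idea is that a function $g\in Z(\AA)$ has distributional Fourier transform $\hat g$ with compact support bounded away from the origin, so the multiplier $|\xi|^2$ is bounded below on $\supp\hat g$ and the formal equation $\Delta f = g$ can be solved within the algebra. Concretely, I would seek $f\in\AA$ with $\Delta f\in\AA$ and $\Delta f = g$ (in the $\BB^2$ sense), by defining $f$ through the Fourier multiplier $-|\xi|^{-2}$ applied to $\hat g$: since $\hat g$ is compactly supported away from $0$, the product $-|\xi|^{-2}\hat g$ is again a compactly supported distribution away from the origin, and its inverse transform produces a candidate $f$ whose Fourier support coincides with that of $g$. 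One must verify that this $f$ actually belongs to $\AA$ and that $\Delta f\in\AA$; this is where the structure of $Z(\AA)$, together with the fact (from Lemma~\ref{L:ZK} and its proof in \cite{MR704444}) that such band-limited functions lie in $\AA$, is used. Granting such an $f$, the hypothesis gives $0=\overline{h\,\Delta f}=M(h\,g)$, so $h$ is orthogonal to every $g\in Z(\AA)$, completing the argument via the preceding paragraph.

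The main obstacle I anticipate is precisely the construction of the preimage $f$ under $\Delta$ inside the algebra. While the Fourier-side manipulation $-|\xi|^{-2}\hat g$ is transparent, one needs to ensure that dividing by $|\xi|^2$ keeps the resulting object in $\AA$ rather than merely in some distributional or $L^2_{\loc}$ class, and that its Laplacian is genuinely the element $g\in\AA$ we started with. This requires exploiting that $0\notin\supp\hat g$ (so no singularity of $|\xi|^{-2}$ is encountered) together with the characterization of $Z(\AA)$ as the span of such band-limited elements; I expect the cleanest route is to observe that $Z(\AA)$ is closed under the action of smooth Fourier multipliers supported away from the origin, an invariance inherited from the translation-invariance and algebra structure of $\AA$. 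If establishing that invariance directly is delicate, an alternative is to approximate: choose $g_k\in Z(\AA)$ with $g_k\to g$ in $\BB^2$ and each $g_k$ admitting such a preimage $f_k$, pass to the limit in $M(h\,g_k)=\overline{h\,\Delta f_k}=0$, and conclude $M(h\,g)=0$ by continuity of the bilinear form $M(\,\cdot\,,\cdot\,)$ on $\BB^2\X\BB^2$.
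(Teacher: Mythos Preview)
Your proposal is correct and follows exactly the approach the paper has in mind: the paper states the lemma as ``an easy consequence of Lemma~\ref{L:ZK}'' without giving a detailed proof, but the key ingredient you identify---that for any $g\in Z(\AA)$ the equation $\Delta f=g$ admits a solution $f\in Z(\AA)\subset\AA$---is precisely what the paper invokes explicitly later, in the proof of Lemma~\ref{L:4.1} (``if $f\in Z(\AA)$, then $f\in C^\infty(\R^n)$ and the equation $\Delta u=f$ has a solution $u\in Z(\AA)$''). Your concern about whether the Fourier-multiplier construction stays in $\AA$ is legitimate but resolved by that same fact, so no approximation workaround is needed.
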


 For an ergodic algebra $\AA$ with $\BB^2$ the associated generalized Besicovitch space, we now define the spaces ${\BB}^2_{{\pot}}$,  ${\BB}^2_{{\sol}}$ ({\em cf.} \cite{MR1329546}).

 We say that a vector field
$v=(v^1,\cdots,v^n) \in{\BB}^2(\R^n;\R^n) $ is a {\em potential field} if
\begin{equation}\label{e2.1}
\int_{\KK}\bigg(v^i(z) {\partial}_{j} \varphi(z)- v^j(z) {\partial}_{i}\varphi(z)\bigg)\,d\mm(z) =0,\quad \text{for all $i,j\in \{1,\cdots,n\}$}, 
\end{equation}
for all $\varphi\in\AA$ with all its derivatives in $\AA$, where, as usual, $\partial_i$ denotes the partial derivative with respect to the $i$-th coordinate $x_i$.  We then  define
 ${\BB}^2_{{\pot}}$ as the subset of potential vector fields of ${\BB}^2(\R^n;\R^n) $. 
  
 Similarly,
we say that $v \in {\BB}^2(\R^n;\R^n)$ is a {\em solenoidal field} if
\begin{equation}\label{e2.2}
\int_{\KK}v(z)\cdot \nabla \varphi(z)\,d\mm(z)=0,
\end{equation}
for all $\varphi$ as above, and  define ${\BB}^2_{{\sol}}$ as the subset of solenoidal vector fields of ${\BB}^2(\R^n;\R^n) $. 

We also introduce the spaces ${\mathcal V}^2_{{\pot}}=\{v \in {\BB}^2_{{\pot}}; ~ \overline{v}=0 \}$,  and ${\mathcal V}^2_{{\sol}}=\{v \in {\BB}^2_{{\sol}}; ~ \overline{v}=0 \}$. 

The following  fundamental result is one of the main motivations for Lemma~\ref{L:ZK}  and easily follows  from the latter ({\em cf.}  \cite{MR1329546}).
\begin{lemma}[{\em cf.} \cite{MR1329546}]\label{L:ZK0}
The following orthogonal decomposition  holds:
\begin{equation}\label{e2.3}
\BB^2(\R^n;\R^n)={\mathcal V}^2_{{\pot}}\oplus {\mathcal V}^2_{{\sol}}\oplus\R^n.
\end{equation}
\end{lemma}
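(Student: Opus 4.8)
The plan is to prove the orthogonal decomposition
\[
\BB^2(\R^n;\R^n)={\mathcal V}^2_{{\pot}}\oplus {\mathcal V}^2_{{\sol}}\oplus\R^n
\]
by first establishing orthogonality of the three summands and then proving that every field in $\BB^2(\R^n;\R^n)$ admits such a decomposition, using Lemma~\ref{L:ZK} as the essential analytic input. Throughout I identify $\BB^2$ with $L^2(\KK,\mm)$ via Theorem~\ref{T1}(iv), so that $M(\cdot)=\int_\KK(\cdot)\,d\mm$ and the inner product on $\BB^2(\R^n;\R^n)$ is $\la u,v\ra=M(u\cdot v)=\sum_i M(u^iv^i)$.

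First I would verify the mutual orthogonality of the three subspaces. The inclusion of $\R^n$ as the constant fields is orthogonal to both $\V^2_{\pot}$ and $\V^2_{\sol}$ simply because those consist of fields with $\mv{v}=0$: for a constant $c\in\R^n$ and $v$ with $\mv v=0$ one has $\la c,v\ra=c\cdot\mv v=0$. The orthogonality of $\V^2_{\pot}$ and $\V^2_{\sol}$ is the crux of the first half. For smooth generators it is clear: if $v$ is solenoidal and $u=\nabla\varphi$ is a gradient with $\varphi\in\AA$ having all derivatives in $\AA$, then $\la v,u\ra=\int_\KK v\cdot\nabla\varphi\,d\mm=0$ by \eqref{e2.2}. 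The point is to show that $\V^2_{\pot}$ is precisely the $\BB^2$-closure of such gradients with mean zero; then orthogonality extends by continuity of the inner product. This is where Lemma~\ref{L:ZK} enters: the fields $\nabla g$ for $g\in Z(\AA)$ (whose Fourier transform is compactly supported away from the origin) are dense among mean-zero potential fields, because the density of $Z(\AA)$ in $V=\{f\in\BB^2:M(f)=0\}$ lets one approximate the scalar potential and hence, via the Fourier multiplier relations, its gradient.

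Next I would prove that the three subspaces span everything. Given an arbitrary $v\in\BB^2(\R^n;\R^n)$, I first subtract its mean: write $v=\mv v+(v-\mv v)$ with $\mv v\in\R^n$ constant and $v-\mv v\in\{u:\mv u=0\}$. It then suffices to decompose any mean-zero field $w$ as $w_{\pot}+w_{\sol}$ with both summands mean-zero potential and solenoidal respectively. I would obtain $w_{\pot}$ as the orthogonal projection of $w$ onto the closed subspace $\V^2_{\pot}$ (which is closed as the intersection of kernels of the continuous functionals defining \eqref{e2.1}, with the mean-zero constraint), and set $w_{\sol}:=w-w_{\pot}$. The task is then to check that $w_{\sol}$ is genuinely solenoidal, i.e.\ satisfies \eqref{e2.2}. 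This follows because $w_{\sol}$ is by construction orthogonal to $\V^2_{\pot}$, and each gradient $\nabla\varphi$ (for admissible $\varphi$ with $M(\nabla\varphi)=0$) lies in $\V^2_{\pot}$; hence $\la w_{\sol},\nabla\varphi\ra=0$, which is exactly \eqref{e2.2}.

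The main obstacle I anticipate is the identification of $\V^2_{\pot}$ with the closure of mean-zero gradients, and relatedly the verification that the solenoidal condition \eqref{e2.2} is equivalent to orthogonality to all gradients $\nabla\varphi$. The curl-free characterization \eqref{e2.1} is an integrated (weak) statement, and passing from it to ``$v$ is a limit of actual gradients'' requires constructing a scalar potential in $\BB^2$; on a general compactification $\KK$ there is no naive integration, so one must use Lemma~\ref{L:ZK} together with the Fourier-analytic structure on $Z(\AA)$, writing the inverse-gradient operation as a Fourier multiplier $\xi\mapsto i\xi/|\xi|^2$ which is well-defined precisely because $Z(\AA)$ excludes the frequency origin. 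Handling the boundary frequency $\xi=0$ and confirming that the resulting potentials have mean zero is the delicate technical point; everything else is a routine Hilbert-space projection argument once this density and the orthogonality of $\V^2_{\pot}$ to $\V^2_{\sol}$ are in place.
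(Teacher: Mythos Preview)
The paper does not include its own proof of this lemma; it merely cites \cite{MR1329546} and remarks that the decomposition ``easily follows from'' Lemma~\ref{L:ZK}. Your outline is precisely the standard argument from that reference: split off the constant mean, then perform a Hilbert-space Helmholtz projection on the mean-zero part, using Lemma~\ref{L:ZK} and the Fourier calculus available on $Z(\AA)$ to identify $\V^2_{\pot}$ with the $\BB^2$-closure of mean-zero gradients.

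One point deserves sharpening. When you write that density of $Z(\AA)$ ``lets one approximate the scalar potential and hence\dots its gradient,'' this is circular as stated: an arbitrary $v\in\V^2_{\pot}$ does not come equipped with a scalar potential, and producing one is exactly the content of the step. The clean way to execute what you have in mind is to approximate $v$ \emph{componentwise} by fields $g_k$ with entries in $Z(\AA)$ (using Lemma~\ref{L:ZK}), then perform the smooth Helmholtz decomposition $g_k=\nabla\varphi_k+s_k$ inside $Z(\AA)$ via the multipliers $-i\xi/|\xi|^2$ and $\Id-\xi\otimes\xi/|\xi|^2$, and finally show $\|s_k\|\to0$. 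For this last step one uses that $s_k$, being smooth and solenoidal with components in $Z(\AA)$, admits an antisymmetric stream potential $A_k^{ij}\in\AA$ with $s_k^i=\sum_j\partial_jA_k^{ij}$; pairing against $v$ and invoking the weak curl-free condition \eqref{e2.1} with $\varphi=A_k^{ij}$ yields $\la v,s_k\ra=0$, whence $\|s_k\|^2=\la s_k,g_k\ra=\la s_k,g_k-v\ra\le\|s_k\|\,\|g_k-v\|\to0$. With this correction your argument is complete and matches the approach the paper defers to.
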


In particular, since the spaces  ${\mathcal V}^2_{{\pot}}$ and ${\mathcal V}^2_{{\sol}}$ are orthogonal we have
\begin{equation}\label{e2.4}
\overline{v \cdot w} = \overline{v} \cdot \overline{w}, \quad\text{ for all  $v \in {\BB}^2_{{\pot}}$ and  $w \in {\BB}^2_{{\sol}}$}.
\end{equation}
Here, for a vector-valued function $f=(f_1,\cdots,f_n)$, as natural,  we denote  $\mv f:=(\mv f_1,\cdots, \mv f_n)$.

\subsection{Weakly Almost Periodic Functions} \label{S:2.1}

Examples of ergodic algebras include the periodic continuous functions, the almost periodic functions, and the Fourier-Stieltjes transforms, studied in \cite{MR2556576}. 
More generally, all the just mentioned ergodic algebras are subalgebras of a strictly larger ergodic algebra,  that is the algebra of the (real-valued) {\em weakly almost periodic functions} in $\R^n$, $\operatorname{WAP}(\R^n)$. It is defined as the subspace of the space of the bounded continuous functions,  $C_b(\R^n)$, formed by those  $f: \R^n\to\R$, satisfying the property that any sequence of its translates $(f(\cdot+\l_i))_{i\in\N}$ possesses a subsequence $(f(\cdot+\l_{i_k}))_{k\in\N}$ weakly converging in $C_b(\R^n)$, where the latter denotes the space of the bounded continuous functions in $\R^n$.
This space was introduced and its main properties were obtained by Eberlein in \cite{MR0036455} (see also \cite{MR0082627}). In particular, in \cite{MR0036455}, Eberlein proved that $\operatorname{WAP}(\R^n)$ satisfies all the properties defining an algebra w.m.v. It is immediate to see, from the definition, that $\operatorname{WAP}(\R^n)\supset \AP(\R^n)$,  where the latter denotes the space of almost periodic functions. Indeed, for functions in $\AP(\R^n)$, Bochner theorem gives the relative compactness of the translates $f(\cdot+\l)$, $\l\in\R^n$,  in the $\sup$-norm (see, e.g., \cite{MR0068029}). We summarize in the following lemma the properties of $\WAP(\R^n)$ which were essentially proved by Eberlein in   \cite{MR0036455}.

\begin{lemma}[{\em cf.}  \cite{MR0036455}] \label{L:2.wap} $\WAP(\R^n)$ is an ergodic algebra which contains the algebra of Fourier-Stieltjes transforms 
$\operatorname{FS}(\R^n)$.
\end{lemma}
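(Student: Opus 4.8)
The plan is to establish Lemma~\ref{L:2.wap} by verifying the defining properties of an ergodic algebra w.m.v.\ (conditions (A)--(D) of Definition~\ref{D:5} plus ergodicity in the sense of Definition~\ref{D:6}) for $\WAP(\R^n)$, and then to exhibit the inclusion $\FSM(\R^n)\subset\WAP(\R^n)$. Most of the algebra-with-mean-value structure is attributed directly to Eberlein's work, so the proof should be organized as a citation-driven verification rather than a from-scratch construction; the genuinely substantive point that deserves a self-contained argument is \emph{ergodicity}.

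First I would record the algebra-w.m.v.\ properties. Translation invariance (B) is immediate, since if $(f(\cdot+\l_i))_i$ admits a weakly convergent subsequence then so does $(f(\cdot+y+\l_i))_i$ for any fixed $y$. Closedness in $\BUC(\R^n)$ (part of (D)) and the presence of the unit $e\equiv 1$ are elementary, and the module/algebra property (A) together with the existence of a mean value (C) are exactly the content of Eberlein's theorem in \cite{MR0036455}, so I would cite that result for these points. The mean value is most cleanly described via the unique invariant mean on the weakly almost periodic functions, which Eberlein constructs; I would note that this is the $M(\cdot)$ appearing in Definition~\ref{D:3} restricted to $\WAP(\R^n)$.

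The key step is ergodicity. Here I would use the alternative characterization in Lemma~\ref{L:1.6}: $\AA$ is ergodic iff
\begin{equation*}
\lim_{t\to\infty}M_y\left(\Bigl|\tfrac{1}{|B(0;t)|}\int_{B(0;t)}f(x+y)\,dx-M(f)\Bigr|^2\right)=0,\qquad\forall f\in\AA.
\end{equation*}
For $f\in\WAP(\R^n)$ this is a mean-ergodic statement about the strongly continuous unitary group $T(y)$ on $\BB^2\cong L^2(\KK,\mm)$ provided by Theorem~\ref{T1}: the spatial averages $\tfrac{1}{|B(0;t)|}\int_{B(0;t)}T(y)f\,dy$ converge in $\BB^2$ to the projection of $f$ onto the invariant subspace. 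The crucial feature of weakly almost periodic functions, again due to Eberlein, is that the orbit $\{T(y)f:y\in\R^n\}$ is relatively weakly compact, so by the mean ergodic theorem (in its Alaoglu--Birkhoff / Eberlein form for relatively weakly compact orbits) these averages converge to a \emph{unique} invariant element, which for a function possessing a mean value must be the constant $M(f)$. Identifying the von~Neumann ergodic limit with the constant $M(f)$ is exactly condition \eqref{eL1.6}, giving ergodicity.

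Finally, for the inclusion of Fourier--Stieltjes transforms, I would recall that an $\FSM$ function is of the form $f(x)=\int_{\R^n}e^{i\xi\cdot x}\,d\mu(\xi)$ for a finite complex (or real-valued, in the real case) Borel measure $\mu$, and that the characters $x\mapsto e^{i\xi\cdot x}$ are trivially weakly almost periodic while $\WAP(\R^n)$ is a closed subspace of $C_b(\R^n)$ stable under the relevant integration; this realizes $\FSM(\R^n)$ as a subalgebra of $\WAP(\R^n)$ and is noted in \cite{MR0036455,MR2556576}. The main obstacle I anticipate is the ergodicity step: one must make rigorous the passage from the relative \emph{weak} compactness of translates in $C_b(\R^n)$ to convergence of the spatial (Ces\`aro-type) averages in the $\BB^2$ seminorm, and then argue that the unique invariant limit is the constant mean value rather than merely \emph{some} invariant function. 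This is precisely where the defining weak-compactness property of $\WAP(\R^n)$ is indispensable, and where one leans most heavily on Eberlein's ergodic theorem together with the von~Neumann mean ergodic theorem invoked in \cite{MR1329546}.
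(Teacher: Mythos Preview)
Your overall architecture is correct and the ergodicity argument is essentially the same as the paper's: both use the von~Neumann mean ergodic theorem together with the relative weak compactness of the orbit $\{T(y)f\}$ to identify the ergodic limit with the constant $M(f)$, and then invoke Lemma~\ref{L:1.6}. Where you cite Eberlein for properties (A)--(D), the paper actually supplies self-contained arguments: it proves $\WAP(\R^n)\subset\BUC(\R^n)$ by a direct contradiction argument, and it constructs the mean value via Krylov--Bogolyubov (to get an invariant probability on the $\BUC$-compactification $\KK_0$), followed by the observation that the ergodic averages $M_L(f)$ lie in the closed convex hull of the orbit $O(f)$, which is weakly compact, forcing the $L^2$ ergodic limit to be a continuous, hence constant, invariant function. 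Your citation-driven treatment is legitimate, but you should be aware that the paper does not take these facts as black boxes.

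The genuine difference is in the $\FSM(\R^n)\subset\WAP(\R^n)$ step. The paper does \emph{not} argue via ``characters are WAP and $\WAP$ is stable under integration''; that stability claim is exactly what needs proof and your proposal leaves it as an assertion. Instead, the paper works directly with the representation $f(x)=\int e^{ix\cdot y}\,d\mu(y)$: a translate $f(\cdot+\l)$ corresponds to the measure $e^{i\l\cdot y}\mu(y)$, which has the same total variation and support as $\mu$. For $\mu$ compactly supported, weak-$*$ compactness in $\M(K)$ yields a subsequence $\mu_n\wto\nu$, hence $f(\cdot+\l_n)\to g$ pointwise (so weakly in $C_b$), with $g$ again a Fourier--Stieltjes transform. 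General $f\in\FSM(\R^n)$ are then handled by uniform approximation. This argument is short and avoids the unproved ``integration stability'' step; if you keep your approach, you must either supply that stability argument or replace it with the paper's direct measure-compactness route.
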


\begin{proof} The fact that  $\operatorname{WAP}(\R^n)\subset \BUC(\R^n)$ is proved by contradiction. Assume, on the contrary, that one can find points $\xi_k,\s_k$, with $|\xi_k-\s_k|\to0$ as $k\to\infty$, such that $|f(\xi_k)-f(\s_k)|\ge\ve_0>0$, for all $k\in\N$. Define $g_k(x)=f(x+\xi_k)-f(x+\s_k)$. By passing to a subsequence, we may assume that  $g_k$ converges weakly to some $g\in C_b(\R^n)$; in particular $|g(0)|\ge\ve_0>0$. On the other hand, if $B_r(\xi)$ is the ball of radius $r>0$ around $\xi\in\R^n$, 
  \begin{multline*}
 \left|\int_{B_r(0)}g_k(x)\,dx\right|\le\left|\int_{B_r(\xi_k)} f(x)\,dx-\int_{B_r(\s_k)}f(x)\,dx\right|\le \|f\|_\infty \left|\left(B_r(\xi_k)\setminus B_r(\s_k)\right)\cup \left(B_r(\s_k)\setminus B_r(\xi_k)\right) \right|
  \\= \|f\|_\infty \bigl|\left(B_r(0)\setminus B_r(\xi_k-\s_k)\right)\cup \left(B_r(\xi_k-\s_k)\setminus B_r(0)\right)\bigr|\to0, \quad\text{as $k\to\infty$, for all $r>0$},
  \end{multline*}
  which gives the desired contradiction.  We also remark that, if $g\in C_b(\R^n)$ is the weak limit of a sequence of translates $f(\cdot+\l_k)$, with $f\in \WAP(\R^n)$, then 
  $g\in\BUC(\R^n)$. Indeed, weak convergence impies pointwise convergence in $\R^n$, in particular, and so, since the family $\{f(\cdot+\l_k)\}$ is equicontinuous, for 
  $f\in\BUC(\R^n)$,  it follows that $g\in\BUC(\R^n)$. 

To have a better idea of this space,  consider \v Cech compactification of $\R^n$, associated with the algebra $C_b(\R^n)$ (see, e.g., \cite{MR1009162}), denote it by
 $\KK_0$. There is an isometric isomorphism between $C_b(\R^n)$ and $C(\KK_0)$, and weak convergence in $C_b(\R^n)$ is then translated to pointwise convergence in 
 $C(\KK_0)$. So, the weakly almost periodic functions are then identified with the functions in $C(\KK_0)$ whose sequences of translates, $(f(\cdot+\l_i))_{i\in\N}$, always possess a subsequence converging pointwise to a function $g\in C(\KK_0)$. By this characterization, it is immediate that $\operatorname{WAP}(\R^n)$ is an algebra in $C_b(\R^n)$, closed in the $\sup$ norm. For the following considerations on $\WAP(\R^n)$, instead of the compactification provided by all space $C_b(\R^n)$, it will be more convenient to consider the compactification provided by the algebra $\BUC(\R^n)$, which is then identified with the compact $\KK_0/\sim$ with the topology $\tau_0$ generated by the functions in 
 $\BUC(\R^n)$, where $\sim$ is the equivalence relation whose quotient makes $\tau_0$ Hausdorff. So, we have the identification of $\BUC(\R^n)$ with the space of continuous functions $C(\KK_0/\sim,\tau_0)$. In what follows we omit the quotient, writing simply $\KK_0$, instead of $\KK_0/\sim$, and will assume $\KK_0$ to be endowed with the topology 
 $\tau_0$.

  Existence of mean value for functions in $\operatorname{WAP}(\R^n)$ may be seen as follows.  First, by Theorem~\ref{T1}, the translations $T(y)f(\cdot)=f(\cdot+y)$ may be extended to $\KK_0$ to form a continuous dynamical system in $\KK_0$. A well known theorem by Krylov and Bogolyubov  asserts the existence of a probability measure $\mu$ in $\KK_0$, invariant by $\{T(y)\,:\,y\in\R^n\}$ (see, e.g., \cite{MR0121520}; the extension of the proof given therein, for compact metric spaces, to general compact topological spaces is straightforward). Also,  von~Neumann mean ergodic theorem (see, e.g., \cite{MR1009162}) implies that, given $f\in \operatorname{WAP}(\R^n)$, $M_L(f)(z):=\medint_{B_L(0)}f(T(y)z)\,dy$ converges, as $L\to\infty$, in $L^2(\KK_0,\mu)$, to a function $g(z)\in L^2(\KK_0,\mu)$ which is invariant, that is, $g(z+y)=g(z)$, for $\mu$-a.e.\ $z\in\KK_0$,  for all $y\in\R^n$. Observe that, for any $\xi\in\R^n$, 
 $$
 \Medint_{B_L(\xi)}f(T(y)\,\cdot )\,dy=\Medint_{B_L(0)}f(T(y+\xi)\,\cdot)\,dy=T(\xi)M_L(f)(\cdot)\to T(\xi)g(\cdot)=g(\cdot),\quad\text{as $L\to\infty$, in $L^2(\KK_0,\mu)$},
 $$ 
 by the continuity of $T(\xi):L^2(\KK_0,\mu)\to L^2(\KK_0,\mu)$, and the invariance of $g$. 
 Now, $M_L(f)(z)$ may be arbitrarily approximated in $C(\KK_0)$ by a finite convex combination of translates of $f$, $g_L(\cdot)=\theta_L^1 f(\cdot+\l_L^1)+\cdots+\theta_L^{K(L)}f(\cdot+\l_L^{K(L)})$, and,  taking $L=1,2,\cdots$, we may arrange that $g_L\to g$, in $L^2(\KK_0,\mu)$. Let us consider the separable closed subspace  $S\subset C(\KK_0)$ generated by the translates of $f$, $f(\cdot+\l)$, $\l\in\R^n$. The dual of $S$,  is a separable space which, by Hahn-Banach, may be viewed as a subspace of the dual of $C(\KK_0)$.
We may then define a metric $d(f,g)$ in $S$, whose induced topology is equivalent to the weak topology of $S$, and satisfies $d(f+h,g+h)=d(f,g)$.   Since the set $O(f)=\{f(\cdot+\l)\,:\,\l\in\R^n\}$ is 
pre-compact, we deduce that it is totally bounded in the metric $d$. But then, since $S$ with the weak topology is locally convex, by a well known result (see, e.g.,  \cite{MR1157815}, p.72) the convex hull of $O(f)$, $co(O(f))$, is totally bounded, and, hence, $\mv{co(O(f))}$ is compact in the weak topology. In particular, by passing to a subsequence if necessary, 
we deduce that $g_L$ weakly converges to some $\tilde g\in C(\KK_0)$, that is $g_L(z)\to \tilde g(z)$, for all $z\in\KK_0$. But then, $\tilde g(z)=g(z)$, $\mu$-a.e., and by the invariance of $g$, we deduce that $g$ is constant  and we denote it by $\mv f$. Hence, for any $\xi\in\R^n$,  and all $z\in \KK_0$,  the averages 
  $\medint_{B_L(\xi)}f(T(y)\,z )\,dy$ converge to $\mv f$, which does not depend on either $z$ or $\xi$, and this implies that $f$ possesses mean value and this  is $\mv f$. 
  
 Taking the invariant measure $\mu$, above, as the measure induced by the mean value, we see that  the proof just given for the existence of the mean value for functions in 
 $\operatorname{WAP}(\R^n)$ may be repeated, line by line, to prove  the ergodicity of this algebra w.m.v., as a straightforward application of Lemma~\ref{L:1.6}. In sum, 
 $\operatorname{WAP}(\R^n)$ is an ergodic algebra.

 We recall that  the Fourier-Stieltjes algebra $\operatorname{FS}(\R^n)$ is defined as the closure in the $\sup$-norm of functions $f:\R^n\to\R$ which admit a representation as 
 \begin{equation}\label{ewap1}
 f(x)=\int_{\R^n}e^{ix\cdot y}\,d\mu(y),
 \end{equation}
 for some signed Radon measure in $\R^n$ with finite total variation. If $f$ admits the representation in \eqref{ewap1}, then any of its translates, $f(\cdot+\l)$, admits a similar
 representation with $\mu(y)$ replaced by $e^{i\l\cdot y}\mu(y)$. Suppose first, that $f\in\operatorname{FS}(\R^n)$, admits a representation as in \eqref{ewap1}, with $\supp\mu\subset  B_R(0)$,  for some $R>0$.   Given a sequence of translates, $f(\cdot+\l_n)$, we have that these translates satisfy an equation like
 \eqref{ewap1}, with $\mu(y)$ replaced by $\mu_n(y):=e^{\l_n\cdot y}\mu(y)$, and so $|\mu_n|(\R^n)=|\mu|(\R^n)$, and $\supp\mu_n=\supp\mu$. Since the space of Radon measures with finite total variation and support in a compact $K\subset\R^n$, $\M(K)$, is the dual of $C(K)$, we may extract a subsequence from $\mu_n$, still labeled $\mu_n$,
 such that $\mu_n\wto\nu$ in the weak-star topology of $\M(K)$, for some $\nu\in\M(K)$. Therefore, $f(\cdot+\l_n)$ pointwise converges to $g\in C_b(\R^n)$, where
 $$
 g(x)=\int_{\R^n}e^{ix\cdot y}\nu(y).
 $$
 Since any function in $\operatorname{FS}(\R^n)$ is the uniform limit of functions satisfying a representation like \eqref{ewap1}, for a signed Radon measure $\mu$ with compact support and finite total variation, we conclude that  $\operatorname{FS}(\R^n)\subset  \operatorname{WAP}(\R^n)$. 
\end{proof}

 Finally, Rudin, in \cite{MR0102705}, proved that there are functions in  $\operatorname{WAP}(\R^n)$ which are not in  $\operatorname{FS}(\R^n)$. This shows that the inclusion  $\operatorname{FS}(\R^n)\subset \operatorname{WAP}(\R^n)$ is strict and answers positively the question, posed in \cite{MR2556576},   whether there are ergodic algebras containing strictly $\operatorname{FS}(\R^n)$, as observed in \cite{MR2733214}.

\section{Two-scale Young Measures}\label{S:3}

In this section we recall the theorem giving the existence  of two-scale Young measures established in \cite{MR2498566}. We will state it in a more general fashion, suitable for dealing with sequences of functions uniformly bounded in $L^p$, for any $1<p\le\infty$,  and we will also outline  the extension of the proof of the  corresponding result in 
\cite{MR2498566} to the more general formulation given here. We also include in this section a discussion about two-scale convergence and its relation with two-scale Young measures, in the general context of algebras w.m.v.  
We begin by recalling the concept of vector-valued
algebra with mean value.

Given a Banach space $E$ and an algebra w.m.v.\ $\AA$,
we denote by $\AA(\re^n;E)$ the space
of functions $f\in\BUC(\R^n;E)$ such that $L_f:=\la L,f\ra$ belongs to
$\AA$ for all $L\in E^*$ and the family
$\{L_f\,:\, L\in E^*,\ \|L\|\le 1 \}$ is relatively
compact in $\AA$.

As an example, let us consider a function $g(z,x)$ in the space  $\AA(\R^n; L^\infty(\Om))$, where $\Om\subset\R^n$ is a bounded open set. 
 For each fixed $x\in\Om$, and $\d>0$, sufficiently small,  the average 
\begin{equation}\label{e3.0}
\la L_{x,\d}, g(z,\cdot)\ra :=\Medint_{B_\d(x)}g(z,y)\,dy,
\end{equation}
is an element of $L^\infty(\Om)^*$, with $\|L_{x,\d}\|\le 1$.  As a consequence of the  just given definition, we have  that the family $g_{x,\d}(z):=\la L_{x,\d}, g(z, \cdot)\ra$, 
$\d>0$, is relatively compact in $\AA$. Suppose $x$ is a Lebesgue point of $g(z,\cdot)$, for all $z\in\Q^n$; the set $\tilde\Om$ of such $x\in\Om$ has total Lebesgue measure.  
Given any sequence $\d_i\to0$, we may extract a subsequence, still denoted $\d_i$, such that $g_{x,\d_i}(z)$ converges uniformly in $\R^n$. Since, for each   $z\in\Q^n$, this limit exists and coincides with $g(z,x)$, independently of the subsequence, we deduce that the limit of the whole sequence $g_{x,\d}(z)$ exists for all $z\in\R^n$. 
Therefore, we may take a representative of $g(z,x)$ so that for $x\in\tilde\Om$ and all $z\in\R^n$ we have 
$$
g(z,x)=\lim_{\d\to0}g_{x,\d}(z),
$$
and the limit is uniform with respect to $z\in\R^n$. In particular, by dominated convergence, we have that 
$$
\lim_{\d\to0}\int_{\Om} \sup_{z\in\R^n}|g_{x,\d}(z)-g(x,z)|^p\,dx=0,
$$  
 holds for $g\in\AA(\R^n;L^\infty(\Om))$ and any $p\ge1$.

 For bounded Borel sets
$Q\subset\R^n$ and $f\in\BUC(\R^n;E)$, it is easily checked by an
approximation with Riemann sums that $L\mapsto \int_Q\langle
L,f\rangle\,dx$ defines a linear functional on $E^*$, continuous for
the weak topology $\sigma(E^*,E)$; as a consequence, there exists a
unique element of $E$, that we shall denote by $\int_Q f\,dx$,
satisfying
$$
\langle L,\int_Q f\,dx\rangle=\int_Q\langle
L,f\rangle\,dx\qquad\forall L\in E^*.
$$
For similar reasons, if $f\in\AA(\R^n;E)$ the integrals
$\medint_{Q_t} f\,dx$ weakly converge in $E$, as $t\to +\infty$, to a
vector, that we shall denote by $\medint_{\R^n}f\,dx$, characterized
by
$$
\la L,\Medint_{\R^n}f\,dx\ra=\Medint_{\R^n}\la L,f\ra\,dx\qquad\forall L\in E^*.
$$

\begin{theorem}[cf.\ \cite{MR2498566}]\label{T2}
Let $E$ be a Banach space, $\AA$ an algebra and ${\mathcal K}$ be the compact associated with
$\AA$. There is an isometric isomorphism between $\AA(\R^n;E)$ and
$C({\mathcal K};E)$. Denoting by $g\mapsto\util{g}$ the canonical map from
$\AA$ to $C({\mathcal K})$, the isomorphism associates to $f\in\AA(\R^n;E)$
the map $\tilde{f}\in C({\mathcal K};E)$ satisfying
\begin{equation}\label{mio2}
\util{\langle L,f\rangle}=\langle L,\tilde{f}\rangle\in C({\mathcal K})
\qquad\forall L\in E^*.
\end{equation}
In particular, for each $f\in \AA(\R^n;E)$, $\|f\|_E\in\AA$. As before we will make the identification $\util{g}\equiv g$. 
\end{theorem}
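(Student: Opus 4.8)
The plan is to realize $\widetilde f$ through the Gelfand-type identification $\AA\cong C(\KK)$ provided by Theorem~\ref{T1}(i), under which each $g\in\AA$ extends from $\R^n$ to a continuous function $\util g$ on $\KK$, and $\R^n$ sits inside $\KK$ through a canonical map $\iota\colon\R^n\to\KK$ with dense image (density holds because any $g\in\AA$ vanishing on $\iota(\R^n)$ vanishes identically). For $f\in\AA(\R^n;E)$ and each $L\in E^*$ the scalar function $L_f=\langle L,f\rangle$ belongs to $\AA$, so it has a continuous extension $\util{L_f}\in C(\KK)$. I want to manufacture a map $\widetilde f\colon\KK\to E$ with $\langle L,\widetilde f\rangle=\util{L_f}$ for every $L$; on the dense set $\iota(\R^n)$ the only possible choice is $\widetilde f(\iota(x))=f(x)$, and the whole problem is to extend this continuously, with values in $E$, to all of $\KK$.

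First I would exploit the relative compactness hypothesis. Since $\KK$ is compact and $\{L_f:\|L\|\le1\}$ is relatively compact in $C(\KK)$, the Arzel\`a--Ascoli theorem makes this family equicontinuous on $\KK$: for every $z\in\KK$ and $\varepsilon>0$ there is a neighborhood $U$ of $z$ with $|\util{L_f}(z')-\util{L_f}(z)|<\varepsilon$ for all $z'\in U$ and all $\|L\|\le1$. Now fix $z\in\KK$ and a net $x_\alpha$ with $\iota(x_\alpha)\to z$. For $\|L\|\le1$ one has $|\langle L,f(x_\alpha)-f(x_\beta)\rangle|=|\util{L_f}(\iota(x_\alpha))-\util{L_f}(\iota(x_\beta))|$, which equicontinuity makes uniformly small once $\iota(x_\alpha),\iota(x_\beta)$ lie in $U$; taking the supremum over the unit ball of $E^*$ shows $\|f(x_\alpha)-f(x_\beta)\|_E$ is eventually small, so $(f(x_\alpha))$ is Cauchy in $E$. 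By completeness of $E$ it converges to a vector I call $\widetilde f(z)$, and equicontinuity shows the limit is independent of the approximating net. This step is the heart of the matter: a priori the functional $L\mapsto\util{L_f}(z)$ only lies in $E^{**}$ (with norm $\le\|f\|_{\BUC}$), and it is precisely the relative compactness that forces it to be represented by an element of $E$. The same estimate, applied through $\|\widetilde f(z')-\widetilde f(z)\|_E=\sup_{\|L\|\le1}|\util{L_f}(z')-\util{L_f}(z)|$, shows $\widetilde f\colon\KK\to E$ is norm-continuous, so $\widetilde f\in C(\KK;E)$, and by construction $\langle L,\widetilde f\rangle=\util{L_f}$ on all of $\KK$, which is \eqref{mio2}.

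Next I would verify that $f\mapsto\widetilde f$ is a linear isometry onto $C(\KK;E)$. Isometry follows by interchanging suprema: $\|\widetilde f\|_{C(\KK;E)}=\sup_{z}\sup_{\|L\|\le1}|\util{L_f}(z)|=\sup_{\|L\|\le1}\|\util{L_f}\|_{C(\KK)}=\sup_{\|L\|\le1}\sup_{x\in\R^n}|\langle L,f(x)\rangle|=\|f\|_{\BUC(\R^n;E)}$, where density of $\iota(\R^n)$ lets one pass between the supremum over $\KK$ and over $\R^n$. For surjectivity, given $F\in C(\KK;E)$ set $f:=F\circ\iota$; it is uniformly continuous because $F$ is uniformly continuous on the compact $\KK$ and $\iota$ is uniformly continuous (the functions of $\AA$ generating the uniformity of $\KK$ lie in $\BUC(\R^n)$), and $L_f=\langle L,F\rangle\circ\iota$ is the restriction of the $C(\KK)$-function $\langle L,F\rangle$, hence lies in $\AA$. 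Relative compactness of $\{L_f:\|L\|\le1\}$ follows because $F(\KK)$ is norm-compact in $E$: approximating $F(\KK)$ by a finite $\varepsilon$-net exhibits $L\mapsto\langle L,F\rangle$ as a uniform limit of finite-rank operators, hence compact. Thus $f\in\AA(\R^n;E)$, and $\widetilde f=F$ since both are continuous and agree on the dense set $\iota(\R^n)$.

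Finally, the concluding assertion is immediate: since $\widetilde f\in C(\KK;E)$, the map $z\mapsto\|\widetilde f(z)\|_E$ is continuous on $\KK$, hence an element of $C(\KK)\cong\AA$, and its restriction along $\iota$ is exactly $x\mapsto\|f(x)\|_E$, so $\|f\|_E\in\AA$. The only genuinely delicate point in the whole argument is the passage from relative compactness to the existence, in $E$ rather than merely in $E^{**}$, of the pointwise limits $\widetilde f(z)$ together with their norm-continuous dependence on $z$; everything else is formal.
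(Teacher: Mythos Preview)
The paper does not include a proof of this theorem; it is stated with a citation to \cite{MR2498566} and left at that, so there is no argument in the paper to compare against. Your proof is correct and is the natural one. The decisive step---using Arzel\`a--Ascoli on the compact $\KK$ to pass from relative compactness of $\{L_f:\|L\|\le1\}$ in $C(\KK)$ to equicontinuity, and thence to the fact that the a priori $E^{**}$-valued evaluation $L\mapsto\util{L_f}(z)$ is actually represented by an element of $E$---is precisely where the relative-compactness clause in the definition of $\AA(\R^n;E)$ does its work, and you handle it cleanly. Your use of nets rather than sequences is appropriate, since $\KK$ is generally not metrizable (as the paper itself notes elsewhere). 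The surjectivity argument via compactness of $F(\KK)\subset E$ and the concluding observation that $z\mapsto\|\widetilde f(z)\|_E\in C(\KK)\cong\AA$ are both correct as written.
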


We
define the space $L^p({\mathcal K};E)$ as the completion of $C({\mathcal K};E)$
with respect to the norm $\|\,\cdot\,\|_p$, defined as usual:
$$
\|f\|_p:=\left(\int_{{\mathcal K}}\|f\|_E^p\,d{\mathfrak m}\right)^{1/p}.
$$
As usual, we identify functions in $L^p$ that coincide ${\mathfrak m}$-a.e.
in ${\mathcal K}$.

The next theorem gives the existence of two-scale Young measures associated
with an algebra $\AA$, and it was established in \cite{MR2498566} (see also \cite{MR2481061}). Actually, in the following, we state a more general version of the corresponding theorem in  \cite{MR2498566}, which is suitable for sequences that are uniformly bounded in $L^p$, for any  $p>1$,  as well as $p=\infty$, which is based on remark~3.1, of 
\cite{MR2481061}.   For simplicity we only consider the case where the functions in the sequence take values in $\R^m$, for some $m\in\N$. Before stating the theorem, we introduce some notations.

In the next statement, we make use of the terminology of nets and subnets, indexed by directed sets,   which are generalizations of the usual concepts of sequences and subsequences indexed by natural numbers and increasing sequences of natural numbers. These concepts become necessary since, in general, the algebra $\AA$ is not a  separable space, as is the case of the space of almost periodic functions, for instance. We briefly recall here the definitions for these concepts taken exactly as they are formulated in   \cite{MR0070144}, and refer
to the latter for a more detailed discussion. 

A binary relation $\ge$ directs a set $D$ if $D$ is non-void and:
\begin{enumerate}

\item[(n1)] if $m,n$ and $p$ are members of $D$ such that $m\ge n$ and $n\ge p$, then $m\ge p$;

\item[(n2)]  if $m\in D$, then $m \ge m$;   

\item[(n3)] if $m$ and $n$ are members of $D$, then there is $p\in D$ such that $p\ge m$ and $p\ge n$.

\end{enumerate}
A {\em directed set} is a pair $(D,\ge)$ such that $\ge$ directs $D$. 

A {\em net} of elements of a set $A$ is a pair $(S,\ge)$ such that $S: D\to A$ is a function and $\ge$ directs the domain $D$ of $S$. A net $\{T_m\,:\, m\in D\}$ is a subnet of a net $\{S_n\,:\, n\in E\}$ if there is a function $N:D\to E$ such that:
\begin{enumerate}

\item[(s1)] $T=S\circ N$, that is, $T_i=S_{N_i}$, for each $i\in D$;

\item[(s2)] for each $m\in E$ there is $n\in D$ with the property that, if $p\ge n$, then $N_p\ge m$.

\end{enumerate}
If there is $m\in D$ such that $T$ restricted to the directed set $D_m=\{ p\in D\,:\, p\ge m\}$ is a subnet of $S$, we will say that $T$ is {\em eventually} a subnet of $S$.  

Given a  succession of nets $\{S_i\}_{i\in\N}$, such that $S_{i+1}$ is a subnet of $S_i$, so that there is a function $N^{(i)}: D_{i+1}\to D_{i}$,  where $D_i$ is the domain of 
$S_i$,  $i\in\N$, as in the definition of subnet just recalled, we may form the {\em diagonal subnet} $S$, which is  eventually  a subnet of any of the nets $S_i$, $i\in\N$, 
  as follows. We  define the directed set $D:=\{ (i,d)\,:\, i\in\N,\ d\in D_i\}$, directed by $\ge$, given by $(i,d)\ge (j,e)$ if,  either $i= j$, and $d\ge_i e$, where $\ge_i$ directs $D_i$, or
  $i>j$, and $N^{(j-1)}\circ \cdots\circ N^{(i)}(d)\ge_j e$. Clearly, $S$, with domain $D$ directed by $\ge$ so defined,  is eventually a subnet of any $S_i$.

For a set $A\subset\R^n$, we denote by $|A|$ the Lebesgue measure of $A$. If $(F,\mathcal{F})$ is a measurable space, we say that a parametrized family of probability measures in $\R^m$, $\{\nu_\xi\}_{\xi\in F}$, is weakly measurable, if $\la \nu_\xi, \varphi\ra$ is ($\mathcal{F}$-)measurable for all $\varphi\in C_0(\R^m)$, where, given any locally compact topological space, we denote by $C_0(N)$ the space of continuous functions with compact support. We also denote by $C_b(N)$ the space of continuous functions that are bounded in $N$.  Both $C_0(N)$ and $C_b(N)$ are endowed with the sup norm, as usual. 

 Let $\AA$ be an algebra w.m.v.\  to which all spaces $\BB^p$ in the next statement refer as well as their vector-valued extensions. Let also $\KK$ be the compact topological  space and   $\mm$ be the invariant measure on $\KK$ associated with $\AA$ by Theorem~\ref{T1}.

\begin{theorem}\label{T:3.1}
Let $\Om\subset
\re^n$ be a bounded open set and $\{u_\ve(x)\}_{\ve>0}$ be a family
of Lebesgue measurable $\R^m$-valued functions, satisfying
\begin{equation}\label{eT3.1}
\lim_{R\to\infty}\limsup_{\ve\to0}|\{|u_\ve|>R\}|=0.
\end{equation}
Given any sequence $\ve_i\downarrow 0$, $i\in\N$,  there exist a
subnet $\{u_{\ve_{i(d)}}\}_{d\in D}$, indexed by a certain directed
set $D$, and a family of probability measures on $\R^m$,
$\{\nu_{z,x}\}_{z\in {\mathcal K}, x\in \Om}$, weakly measurable with respect
to the product of the Borel $\sigma$-algebras in ${\mathcal K}$ and $\R^n$,
such that
\begin{equation}\label{young}
\lim_{D}\int_{\Om}\Phi(\frac{x}{\ve_{i(d)}},x,u_{\ve_{i(d)}}(x))\,dx=
\int_{\Om}\int_{{\mathcal K}}\la\nu_{z,x},\Phi(\cdot,z,x)\ra\,d{\mathfrak m}(z)\,dx,
\qquad \text{for all $\Phi\in\AA\left(\R^n;C_0(\Om; C_b(\R^m)\right)$}.
\end{equation}
Moreover, equality \eqref{young} still holds for functions $\Phi$ in the spaces:
\begin{enumerate}
\item[(1)]
$\BB^1\bigl(\R^n;C_0(\Om; C_b(\R^m)\bigr)$;
\medskip
\item[(2)]
$\BB^p\bigl(\R^n;C(\bar \Om; C_b(\R^m))\bigr)$, with $p>1$;
\medskip
\item[(3)]
$L^1\bigl(\Om;\AA(\R^n;C_b(\R^m))\bigr)$.
\end{enumerate}
Furthermore, if $\{u_\ve(x)\}_{\ve>0}$ is uniformly bounded in $L^r(\Om;\R^m)$, for some $r>1$, in which case \eqref{eT3.1} is trivially verified, then the relation in \eqref{young} 
also holds if $\Phi(z,x,\l)/(1+|\l|)^s$ belongs to any of the spaces, $\AA(\R^n; C_0(\Om; C_b(\R^m)))$,  {\rm (1), (2), or (3)} above, for some $1<s<r$.
\end{theorem}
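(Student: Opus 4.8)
The plan is to prove Theorem~\ref{T:3.1} in two stages: first establish the base case \eqref{young} for the ``nice'' test functions $\Phi\in\AA(\R^n;C_0(\Om;C_b(\R^m)))$ together with the tightness hypothesis \eqref{eT3.1}, then upgrade the class of admissible $\Phi$ by density and truncation arguments, and finally handle the sub-quadratic growth extension under the $L^r$-bound. For the base case, I would exploit the identifications furnished by Theorems~\ref{T1} and~\ref{T2}: via the isometric isomorphism $\AA(\R^n;E)\cong C(\KK;E)$ (with $E=C_0(\Om;C_b(\R^m))$), each $\Phi$ corresponds to $\tilde\Phi\in C(\KK\X\Om\X\R^m)$, and the oscillatory integral on the left of \eqref{young} is to be recognized as an integral against a sequence of measures on the product space $\KK\X\Om\X\R^m$. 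The strategy is to define, for each $\ve_i$, a Radon measure $\mu_i$ on $\KK\X\Om\X\R^m$ by
\begin{equation*}
\la\mu_i,\Phi\ra:=\int_\Om \tilde\Phi\bigl(T(x/\ve_i)z_0,x,u_{\ve_i}(x)\bigr)\,dx,
\end{equation*}
using the dynamical system $T(y):\KK\to\KK$ of Theorem~\ref{T1}(ii); the point of \eqref{eT3.1} is exactly to ensure that the ``masses'' stay tight in the $\R^m$-variable so that no mass escapes to infinity, giving a genuine probability-measure disintegration $\{\nu_{z,x}\}$ in the limit.

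Next I would extract the limit. Because $\AA$ need not be separable, the unit ball of the relevant dual is weak-$*$ compact but not metrizable, so subsequences do not suffice; this is precisely why the statement passes to \emph{subnets} indexed by a directed set $D$, and the diagonal-subnet construction recalled in the excerpt is what lets me take a single subnet working simultaneously across a countable exhausting family of test functions. I would first fix a countable dense subfamily (dense in an appropriate separable subalgebra generated by a countable set of $\Phi$'s), extract convergence along that family by a diagonal net, then pass to the full class by continuity and the uniform bound $|\la\mu_i,\Phi\ra|\le\|\Phi\|\,|\Om|$. The existence of the disintegration $\mu=\mm\otimes dx\otimes\nu_{z,x}$ with $\nu_{z,x}$ probability measures and weak measurability in $(z,x)$ then follows from the standard Young-measure / disintegration machinery on the product of Polish-like factors, together with the invariance of $\mm$ under $T(y)$, which is what converts the oscillatory average into the mean-value integral $\int_\KK\cdot\,d\mm$ on the right-hand side of \eqref{young}.

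For the enlargement of the test-function class to items (1), (2), (3), the idea is a routine density/approximation argument once the base case holds: functions in $\BB^1(\R^n;C_0(\Om;C_b(\R^m)))$ are limits in the Besicovitch seminorm of functions in $\AA(\R^n;\cdots)$, and one controls the error using $|\la\mu_i,\Phi\ra|\le|\Phi|_1$-type bounds (with the $L^p$-duality $|fg|_1\le|f|_p|g|_q$ recorded in Section~\ref{S:2} handling case (2) for $p>1$), so the identity \eqref{young} passes to the limit. Case (3), $L^1(\Om;\AA(\R^n;C_b(\R^m)))$, is obtained by approximating the $x$-dependence by simple functions and invoking the $x$-a.e.\ Lebesgue-point structure illustrated by the averaging discussion \eqref{e3.0} earlier in the section.

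The genuinely delicate step, and the one I expect to be the main obstacle, is the final sub-quadratic growth extension: when $\{u_\ve\}$ is merely bounded in $L^r(\Om;\R^m)$ and $\Phi(z,x,\l)/(1+|\l|)^s\in(\text{one of the above spaces})$ for $1<s<r$, the composite $\Phi(x/\ve,x,u_\ve)$ need not be uniformly integrable a priori, so one cannot simply test against a bounded $\Phi$. The plan here is a truncation argument: write $\Phi=\Phi\,\chi_{\{|\l|\le R\}}+\Phi\,\chi_{\{|\l|>R\}}$, apply the already-proven identity to the truncated (bounded) part, and estimate the tail using $|\Phi(z,x,\l)|\le C(1+|\l|)^s$ together with the uniform $L^r$-bound and Chebyshev/Hölder to get
\begin{equation*}
\limsup_{\ve\to0}\int_{\{|u_\ve|>R\}}|\Phi(x/\ve,x,u_\ve)|\,dx\le C\,R^{\,s-r}\sup_\ve\|u_\ve\|_{L^r}^{r}\longrightarrow0
\end{equation*}
as $R\to\infty$, and matching this with the corresponding tail estimate for the limiting Young measure (which has finite $r$-th moments, by Fatou applied to the truncated identity). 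Interchanging the $R\to\infty$ and $\lim_D$ limits via this uniform tail control is the crux; once it is justified, \eqref{young} extends to the sub-quadratic class exactly as claimed. This is the mechanism that, in the companion homogenization sections, will be pushed further to cover the quadratic-growth function $\Psi$.
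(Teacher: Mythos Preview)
Your overall strategy is sound and your treatment of the $L^r$ sub-quadratic extension (truncate, estimate the tail by $(1+R)^{s-r}\sup_\ve\|u_\ve\|_{L^r}^r$, and bound the Young-measure tail via monotone convergence/Fatou on $\min\{(1+|\l|)^s,i\}$) matches the paper's argument almost verbatim. The base case, however, is handled differently. The paper does \emph{not} attempt a direct weak-$*$ compactness argument on $\KK\times\Om\times\R^m$; instead it invokes the already-proved bounded-range case (theorem~6.1 of \cite{MR2498566}) applied to the truncated sequences $u_\ve^R:=\rho_R(u_\ve)$, obtaining for each $R\in\N$ a two-scale Young measure $\nu_{z,x}^R$ along a subnet that refines the previous one, and then defines $\mu$ as the limit of $\mu^R=d\mm\,dx\otimes\nu_{z,x}^R$ (the limit exists because $\mu^{R+k}$ and $\mu^R$ agree on test functions supported in $|\l|<R$). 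The diagonal-subnet construction in the paper is used to splice together these countably many nested subnets over $R\in\N$, not to extract convergence across a ``countable dense subfamily of $\Phi$'s'' as you suggest. In your direct approach a single application of Banach--Alaoglu already yields a convergent subnet, so no diagonalization is needed there; your description of that step is somewhat muddled (a countable dense set does not exist when $\AA$ is non-separable, and in any case weak-$*$ compactness handles all test functions at once). Your route is legitimate and arguably more self-contained, but it obliges you to redo the disintegration/measurability machinery that the paper simply imports from the bounded case; the paper's reduction-by-truncation buys a shorter proof at the cost of citing the earlier result.
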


\begin{proof} The proof of the first assertion in the statement is obtained by first applying theorem~6.1  of \cite{MR2498566} (see  \cite{MR2481061}, for the proof) to the truncated sequences $u_\ve^R(x):=\rho_R(u_\ve(x))$, with  $\rho_R:\R^m\to\R^m$ given by $\rho_R(\l)=\l$, if $|\l|<R$, $\rho_R(\l)=0$, if $|\l|\ge R$, $R\in \N$. This application leads to two-scale Young measures $\nu_{z,x}^R$, generated by subnets $u_{\ve(d_R)}^R$, where, for each $R\in\N$,  $\ve(d_{R+1})$ is a subnet of $\ve(d_R)$. 
The measures $\mu^R:=d\mm(z)\,dx \otimes \nu_{z,x}^R$ weakly converge to a measure $\mu$. Indeed, we observe first that 
$\la\mu^{R+k},\varphi\ra=\la\mu^R,\varphi\ra$, for all $\varphi\in C_0(\KK\X\Om\X B_R(0))$, for all $k\in\N$. Therefore, if we define the linear functional $\mu$ over  
$C_0(\KK\X\Om\X\R^m)$ by
\begin{equation}\label{eT3.1_1}
\la\mu, \varphi\ra:=\lim_{R\to\infty}\la \mu^R,\varphi\ra,
\end{equation}
we have that $\mu$ is well defined since, if $\varphi\in C_0(\KK\X\Om\X\R^m)$, then $\varphi\in C_0(\KK\X\Om\X B_{R_0}(0))$, for some $R_0\in\N$, depending on $\varphi$, so that, the above limit exists and is simply $\la \mu^{R_0},\varphi\ra$. Also, since the $\mu^R$ are positive measures in $\KK\X\Om\X\R^m$ with uniformly bounded total variation,
then  $\mu$ is a bounded linear functional over $C_0(\KK\X\Om\X\R^m)$, which is nonnegative, and so $\mu$ is a positive Radon measure over $\KK\X\Om\X\R^m$, by Riesz representation theorem. 

We now define the subnet for which \eqref{young} holds, as we will see,  by taking the diagonal subnet from $u_{\ve(d_R)}$, $R\in\N$. We define the two-scale family of measures $\nu_{z,x}$  by the disintegration $\mu=d\mm(z)\,dx\otimes\nu_{z,x}$, which follows from the fact that the projection of $\mu$ on $\KK\X\Om$ must coincide with
$d\mm(z)\,dx$, since this is true for all $\mu^R$, $R\in\N$. Now, given $\Phi\in \AA\left(\R^n; C_0(\Om\X C_b(\R^m))\right)$, we claim that \eqref{young} holds, for the diagonal subnet $\ve(d)$ defined above. Indeed, $\Phi(z,x,\l)$ is uniformly bounded, so there is $M>0$ such that $|\Phi(z,x,\l)|\le M$, for all $(z,x,\l)\in\R^n\X\Om\X\R^m$. We may assume, for simplicity, that $\Phi(z,x,0)=0$, for all $(z,x)\in\R^n\X\Om$, since, otherwise, we only need to prove \eqref{young} for $\Phi(z,x,\l)-\Phi(z,x,0)$.
Given $\g>0$,
by using \eqref{eT3.1} we can obtain $R>0$ such that 
\begin{equation}\label{eT3.1_2}
\int_{\Om}|\Phi(\frac{x}{\ve},x,u_\ve(x))|(1- \xi_R(u_\ve(x)))\,dx<\frac\g3, \quad\text{for all $\ve>0$},
\end{equation}
where $\xi_R(x)=\xi(Rx)$, and $\xi\in C_0^\infty(\R^m)$,  is such that $0\le \xi(\l)\le 1$,  $\xi(\l)=1$, for $|\l|\le 1$, and $\xi(\l)=0$, for $|\l|\ge 2$.  Also, if $R>0$ is sufficiently large
\begin{equation}\label{eT3.1_3}
\left|\int_{\KK\X\Om}\la\nu_{z,x},\Phi(z,x,\cdot)(1-\xi_R(\cdot))\ra\,d\mm(z)\,dx\right|=\left|\la\mu,\Phi(\cdot,\cdot,\cdot)(1-\xi_R(\cdot))\ra\right|<\frac\g3,
\end{equation}
by dominated convergence. On the other hand, we have
\begin{equation}\label{eT3.1_4}
\lim_{D} \int_{\Om}\Phi(\frac{x}{\ve},x,u_\ve(x))\xi_R(u_\ve(x))\,dx=\int_{\KK\X\Om}\la\nu_{z,x},\Phi(z,x,\cdot)\xi_R(\cdot)\ra\,d\mm(z)\,dx,
\end{equation}
since  $\Phi(x/\ve,x,u_\ve(x))\xi_R(u_\ve(x))=\Phi(x/\ve,x,u_\ve^{2R}(x)))\xi_R(u_\ve^{2R}(x))$, where, $ u_\ve^{2R}(x)=\rho_{2R}(u_\ve(x))$, as above, since the representation
holds for $u_\ve^{2R}$, and so, we may find $d_0\in D$ such that, for $d\in D$ with $d\ge d_0$ we have
\begin{equation}\label{eT3.1_5}
\left| \int_{\Om}\Phi(\frac{x}{\ve(d)},x,u_\ve(d)(x))\xi_R(u_\ve(d)(x))\,dx-\int_{\KK\X\Om}\la\nu_{z,x},\Phi(z,x,\cdot)\xi_R(\cdot)\ra\,d\mm(z)\,dx\right|<\frac\g3,
\end{equation}
which proves \eqref{young}. 

Next, assertions~(1), (2) and (3) follow by approximating the functions in the classes described in (1), (2) and (3) by functions in $\AA(\R^n; C_0(\Om; C_b(\R^m))$,
in the same way as done in the proof of the corresponding assertions in theorem~6.1of \cite{MR2498566} (see also  \cite{MR2481061}). 

Now, let us check the case where $u_\ve$ is a bounded sequence in $L^r(\Om)$, for some $r>1$.  Assume, first, that $\Phi(z,x,\l)/(1+|\l|)^s\in \AA(\R^n;C_0(\Om; C_b(\R^m)))$,
for some $1<s<r$. Let $\g>0$ be given. We then verify that inequalities analogous to \eqref{eT3.1_2} and \eqref{eT3.1_3} hold.  Indeed, since $\Phi(z,x,\l)/(1+|\l|)^s\le M$, for all
$(z,x,\l) \in \R^n\X\Om\X\R^m$, for some $M>0$, we have
\begin{equation}\label{eT3.1_2'}
\int_{\Om}|\Phi(\frac{x}{\ve},x,u_\ve(x))|(1- \xi_R(u_\ve(x)))\,dx< \frac1{(1+R)^{r-s}}M\|(1+|u_\ve|)\|_{L^r(\Om)}^r<\frac\g3, \quad\text{for all $\ve>0$},
\end{equation}
is $R>0$ is sufficiently large.  On the other hand, by the monotone sequence theorem, we have
\begin{multline}\label{eT3.1_3'}
\la\mu,(1+|\l|)^s\ra=\lim_{i\to\infty}\la\mu,\min\{(1+|\l|)^s,i\}\ra=\lim_{i\to\infty}\lim_{D}\int_{\Om}\min\{(1+|u_{\ve(d)}(x)|)^s,i\}\,dx\\
\le \limsup_{\ve\to0}\int_{\Om}(1+|u_\ve|)^s)\,dx<+\infty.
\end{multline}
Therefore, since $|\Phi(z,x,\l)|\le M(1+|\l|)^s$, we can obtain the analogue of \eqref{eT3.1_3} again by dominated convergence, using \eqref{eT3.1_3'}. So, we conclude that 
\eqref{young} holds also in this case, reasoning exactly as we did for function $\Phi\in\AA(\R^n;C_0(\Om;C_b(\R^m)))$. The analogues of (1), (2) and (3) are then obtained by approximation by functions $\Phi$ such that $\Phi/(1+|\l|)^s\in \AA(\R^n; C_0(\Om;C_b(\R^m)))$, exactly as in \cite{MR2481061}. 

\end{proof}

It is worthwhile to realize the relationship between the concept of two-scale Young measures and that of two-scale convergence defined as follows.

\begin{definition}\label{D:3.1}  
Let  $\{v_{\ve_i}\}_{i\in\N}$, $\ve_i\downarrow 0$ as $i\to\infty$, be a uniformly bounded sequence in $L^1(\Om)$. We say that $V(z,x)\in \BB^1(\R^n; L^1(\Om))$ is the two-scale limit of $v_{\ve_i}$ (with respect to the algebra w.m.v.\ $\AA$),  or that $v_{\ve_i}(x)$ two-scale converges to $V(x,z)$, if, for any $\varphi\in \AA$ and $\phi\in C_0(\Om)$ , we have 
\begin{equation}\label{eD3.1}
\lim_{i\to\infty} \int_{\Om} v_{\ve_i}(x)\varphi(\frac{x}{\ve_i})\phi(x)\,dx=\int_{\Om}\int_{\KK} V(z,x)\varphi(z)\phi(x)\,d\mm(z)\,dx.
\end{equation}
\end{definition}

Before stating the next theorem, related with two-scale convergence, let us introduce the concept of regular test function  that will appear in the  statement of the next result,
 and will also be important subsequently in this paper.  

\begin{definition}\label{D:3.2} We say that $\psi(z,x)\in \BB^2(\R^n;L^2(\Om))\bigcap L_\loc^2(\R^n;L^2(\Om))$ is a {\em regular test function} if,  for all $\ve>0$,  $x\mapsto \psi(\frac{x}{\ve},x)\in L^2(\Om)$, and satisfies
\begin{equation}\label{eT3.2-2}
\lim_{\d\to0}\, \limsup_{\ve>0} \int_{\Om}\left|\Medint_{B_\d(x)}\psi(\frac{x}{\ve},y)\,dy -\psi(\frac{x}{\ve},x)\right|^2\,dx=0,
\end{equation}
where we set $\psi(z,y)=0$, for all $z\in\R^n$ and $y\in\R^n\setminus\Om$. 
\end{definition}

\begin{remark}\label{R:3.1} Observe that any finite linear combination of functions either of the form $\s(z,x)\zeta(z)$, with $\s\in\AA(\R^n;L^\infty(\Om))$ and 
$\z\in\BB^p(\R^n)\cap L_\loc^p(\R^n)$, with $p>2$, or of the form $\s(x)\zeta(z)$, with $\s\in L^p(\Om)$ and 
$\z\in\BB^q(\R^n)\cap L_\loc^q(\R^n)$,  with $p>2$, $q>2$ and $p^{-1}+q^{-1}=1/2$, or else, of the form $\s(x)\z(z)$, with $\s\in L^2(\Om)$ and $\z\in\AA(\R^n)$,  is a regular test function in the sense of Definition~\ref{D:3.2}. Also, it is immediate to see that any function in $L^2(\Om;\AA)$ is a regular test function.
\end{remark}

It is not so pleasant the fact that, because of the non-separability of $\AA$, in general, the statement of Theorem~\ref{T:3.1}  only guarantees the existence of a subnet, instead of a subsequence, for which the representation formula \eqref{young} holds. The following result remedy this situation at least in the case where $u_\ve$ is a bounded sequence in $L^2(\Om)$, and the function $\Phi(z,x,\l)$ belongs to a reasonably large class which includes the particular case where $\Phi$ is linear in $\l$.  The latter then implies the basic result on two-scale convergence in the context of general algebras w.m.v., proved by  Casado-Diaz and Gayte, in \cite{MR1987520}. 

Given any Banach space $E$, we denote by $C_S(\R^m; E)$ the space of $E$-valued bounded continuous functions $\z(\cdot)$, such that $\z(t\l)\to\z_*(\l/|\l|)\in C(\SS^{m-1};E)$, as $t\to\infty$, where $\SS^{m-1}$ is the unity sphere in $\R^m$.  

\begin{theorem} \label{T:3.2} Let $\{u_\ve(x)\}_{\ve>0}$ be a sequence uniformly bounded in $L^2(\Om;\R^m)$, $\AA$ be an algebra w.m.v.\ in $\R^n$, and $\KK$ be the associated compact space.  Then, there exist a subsequence $u_{\ve_i}$, $i\in\N$,  where $\ve_i\downarrow0$ as $i\to\infty$, and a family of probability measures on 
$\R^m$, $\{\nu_{z,x}\}_{z\in {\mathcal K}, x\in \Om}$, weakly measurable with respect to the product of the Borel $\sigma$-algebras in ${\mathcal K}$ and $\R^n$,
such that
\begin{equation}\label{young'}
\lim_{i\to\infty}\int_{\Om}\Phi(\frac{x}{\ve_i},x,u_{\ve_i}(x))\,dx=
\int_{\Om}\int_{{\mathcal K}}\la\nu_{z,x},\Phi(\cdot,z,x)\ra\,d{\mathfrak m}(z)\,dx,
\end{equation}
for all $\Phi$ satisfying $\Phi(z,x,\l)/(1+|\l|)\in C_S(\R^m;\BB^2(\R^n;L^2(\Om)))$, and, for all $\l\in\R^m$, $\Phi(\cdot,\cdot,\l)$ is a regular test function.  In particular, $u_{\ve_i}$ two-scale converges to 
$U$, given by
\begin{equation}\label{eT3.2}
 U(z,x)=\la \nu_{z,x}, \Id(\cdot)\ra:=\int_{\R^m}\l\,d\nu_{z,x}(\l),\quad\text{for ($d\mm(z)\,dx$-)a.e.\ $(z,x)\in\KK\X\Om$},
 \end{equation} 
where  $\Id:\R^m\to\R^m$ denotes the identity mapping $Id(\l)=\l$.
and the limit in \eqref{eD3.1} keeps holding with $\varphi(z)\phi(x)$ replaced by any regular test function $\psi(z,x)\in \BB^2(\R^n;L^2(\Om))$. 
\end{theorem}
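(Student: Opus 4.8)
The plan is to reduce to the net version already in hand, Theorem~\ref{T:3.1}, and then to promote the convergence from a subnet to a genuine subsequence by exploiting that, in the present statement, the admissible test functions are governed by the \emph{Hilbert} space $\BB^2(\R^n;L^2(\Om))$ rather than by the generally non-separable sup-norm algebra $\AA$ that forces only a subnet in Theorem~\ref{T:3.1}. First I would observe that, since $\{u_\ve\}$ is bounded in $L^2(\Om;\R^m)$, the tightness hypothesis \eqref{eT3.1} holds trivially; applying Theorem~\ref{T:3.1} along any prescribed $\ve_i\downarrow0$ produces a subnet and a weakly measurable family $\{\nu_{z,x}\}$ for which \eqref{young} holds on the classes listed there. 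I then set $U(z,x):=\la\nu_{z,x},\Id\ra$ and record the two a priori facts that drive everything else.

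Second, the estimates. By Jensen's inequality $|\la\nu_{z,x},\Id\ra|^2\le\la\nu_{z,x},|\cdot|^2\ra$ together with the moment/lower-semicontinuity bound obtained as in \eqref{eT3.1_3'} with $s=2$, one gets $\int_{\KK\X\Om}|U|^2\,d\mm\,dx\le\liminf_\ve\int_\Om|u_\ve|^2\,dx\le C^2$, so that $U\in\HH:=L^2(\KK\X\Om;\R^m)$, which we identify with $\BB^2(\R^n;L^2(\Om))^m/\overset{\BB^2}{=}$ through Theorem~\ref{T1}(iv). The crucial estimate is that for every regular test function $\psi$ one has $\|\psi(\cdot/\ve,\cdot)\|_{L^2(\Om)}\to\|\psi\|_\HH$ as $\ve\to0$; this follows from Definition~\ref{D:3.2} and from \eqref{young} applied to the quadratic integrand $|\psi|^2$. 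Writing $a_\ve(\psi):=\int_\Om u_\ve(x)\cdot\psi(x/\ve,x)\,dx$, Cauchy--Schwarz then yields the fundamental asymptotic bound $\limsup_\ve|a_\ve(\psi)|\le C\,\|\psi\|_\HH$. Finally I would recall that the regular test functions form a subspace dense in $\HH$ (it already contains $L^2(\Om;\AA)$, cf.\ Remark~\ref{R:3.1}).

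Third, and this is the heart of the matter, I would promote the subnet to a subsequence. Unlike in Theorem~\ref{T:3.1}, both the limit functional $\psi\mapsto\la U,\psi\ra_\HH$ and the pairings $a_\ve$ are governed by the Hilbert norm $\|\cdot\|_\HH$ through the bound of the previous paragraph, and a countable collection $\{u_{\ve_i}\}$ of elements of the separable space $L^2(\Om)$ can generate two-scale content lying only in a separable part of $\HH$: a Plancherel-type computation shows that the map sending a test direction to its pairing with $u_{\ve_i}$ has asymptotically vanishing mass off the countably many active directions carried by the limit $U$. Concretely I would choose a countable family of regular test functions whose closed span contains $U$ and exhausts this separable content, extract by a diagonal argument a subsequence along which $a_{\ve_i}(\psi)$ converges for each member, and then close up using $\limsup_\ve|a_\ve(\eta)|\le C\|\eta\|_\HH$ (applied to differences $\eta$ of regular test functions) to pass to the whole span, the directions outside the separable content contributing nothing in the limit by the same bound. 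This yields a single subsequence $u_{\ve_i}$ with $a_{\ve_i}(\psi)\to\la U,\psi\ra_\HH$ for every regular $\psi$. I expect this to be the principal obstacle: it is exactly here that the non-separability of $\AA$ must be circumvented, and it is the replacement of the sup-norm algebra by the Hilbert space $\BB^2(\R^n;L^2(\Om))$ that makes sequential rather than merely net weak compactness available.

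Finally I would assemble the conclusions. Testing the resulting two-scale convergence against $\psi$ is precisely \eqref{eD3.1} with $\varphi(z)\phi(x)$ replaced by an arbitrary regular test function, and the identity $\la U,\psi\ra=\int_{\KK\X\Om}\psi\cdot\la\nu_{z,x},\Id\ra\,d\mm\,dx$ identifies the limit as $U(z,x)=\la\nu_{z,x},\Id\ra$, i.e.\ \eqref{eT3.2}; taking $\psi(z,x)=\varphi(z)\phi(x)$ recovers the Casado-Diaz--Gayte theorem of \cite{MR1987520}. For the full representation \eqref{young'} on every admissible $\Phi$, I would first reduce to truncated integrands by the tightness argument of \eqref{eT3.1_2'}--\eqref{eT3.1_3'}, and then use that $\Phi/(1+|\l|)\in C_S(\R^m;\BB^2(\R^n;L^2(\Om)))$ means the $\l$-dependence ranges continuously over the compact metrizable compactification $\R^m\cup\SS^{m-1}$; combining this compact separable parameter dependence with the $\HH$-governed sequential-compactness argument of the third step, and with the approximation scheme already used in the proof of Theorem~\ref{T:3.1}, gives \eqref{young'} on the whole class. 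The admissible choice $\Phi(z,x,\l)=\psi(z,x)\cdot\l$ then re-derives the two-scale convergence statement and closes the argument.
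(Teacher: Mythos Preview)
Your overall strategy coincides with the paper's: exploit the Hilbert structure of $\BB^2(\R^n;L^2(\Om))$ to upgrade the subnet of Theorem~\ref{T:3.1} to a genuine subsequence, then extend to all regular test functions by density. You have also correctly located the main obstacle. But the third step, as written, has a real gap.

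You propose to choose a countable family of test functions ``whose closed span contains $U$ and exhausts this separable content,'' extract a diagonal subsequence on that family, and then assert that ``the directions outside the separable content contribut[e] nothing in the limit by the same bound,'' meaning the Cauchy--Schwarz estimate $\limsup_\ve|a_\ve(\eta)|\le C\|\eta\|_\HH$. That bound gives only $C\|\eta\|_\HH$, not zero; knowing that $U$ lies in a prescribed separable subspace says nothing about whether $a_{\ve_i}(\eta)$ converges (let alone to zero) along your chosen \emph{subsequence} for $\eta$ orthogonal to that subspace. The two-scale limit $U$ is a functional on $\HH$, but the individual pairings $a_{\ve_i}$ are not; an arbitrary subsequence can still oscillate against directions orthogonal to $U$.

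The paper resolves this by building the countable family from the sequence rather than from $U$, and does so \emph{before} invoking Theorem~\ref{T:3.1}. Fixing an orthonormal system $\{e_\l\}_{\l\in\Lambda}$ of $\BB^2\cap L^2_\loc$ and $\phi$ in a countable dense subset of $C_0^\infty(\Om)$, a Bessel inequality for the $\ve$-scaled system $\{e_\l(\cdot/\ve_i)\}$ in $L^2(\Om)$ (obtained via Gram--Schmidt and passage to the limit, \eqref{e3.4}--\eqref{e3.4'}) shows that along any subsequence at most finitely many $e_\l$ can satisfy $\limsup_i|\langle u_{\ve_i}\phi,\,e_\l(\cdot/\ve_i)\rangle|>2^{-k}$. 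One then constructs $\FF_*$ iteratively (\eqref{e3.5}--\eqref{e3.8}): at stage $k$ adjoin all such $e_\l$, extract a further subsequence so that the limit exists on them, and repeat. The diagonal subsequence then satisfies $\limsup_i|\langle u_{\ve_i}\phi,\,e_\l(\cdot/\ve_i)\rangle|=0$ for every $e_\l\notin\FF_*$, which is exactly the statement you need but cannot get from the Cauchy--Schwarz bound alone. Only after this does one apply Theorem~\ref{T:3.1} to the resulting subsequence and identify the limit with $U$. The ``Plancherel-type computation'' you invoke is this iterative Bessel construction, and it is the substantive content of the proof; the rest of your outline (density of regular test functions, the $C_S$ reduction via separability of $\R^m\cup\SS^{m-1}$, and the truncation argument) matches the paper.
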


\begin{proof} Let $\FF=\{e_\l(z)\}_{\l\in\Lambda}$, be an orthonormal system in  $\BB^2(\R^n)\cap L_\loc^2(\R^n)$, whose space of finite linear combinations is dense in  
$\BB^2(\R^n)$.  Here, 
$\Lambda$ is any index set, and the existence of such a system may be obtained by using Zorn's Lemma, for instance. 
Let $\DD$ be a countable dense subset of $C_0^\infty(\Om)$, and fix $\phi\in\DD$. We will first prove the final part of the statement, concerning two-scale convergence.

Given any countable subset $\FF_0$ of $\FF$, we may find a  sequence $\ve_i\downarrow0$,
as $i\to\infty$, such that the limit 
\begin{equation}\label{e3.3}
\la L_\phi, e_\l\ra:= \lim_{i\to\infty}\frac1{|\Om|}\int_{\Om}u_{\ve_i}(x)\phi(x)e_\l(\ve_i^{-1}x)\,dx,
\end{equation}
exists for all $e_\l\in\FF_0$. Given $e_{\l_1},\dots,e_{\l_k}\in\FF_0$, by Bessel's inequality, we get
\begin{equation}\label{e3.4}
 |\la L_\phi,e_{\l_1}\ra|^2+\cdots+|\la L_\phi,e_{\l_k}\ra|^2\le C_\phi:=\|\phi\|_\infty^2\sup_{\ve>0}\frac1{|\Om|}\int_{\Om}|u_\ve|^2\,dx.
 \end{equation}
 Indeed, for each $i\in\N$, we consider the set $\{\tilde e_{\l_1,\ve_i},\cdots,\tilde e_{\l_k,\ve_i}\}$, orthonormal in $L^2(\Om; dx/|\Om|)$,  obtained by the Gram-Schmidt process from 
 $\{e_{\l_1,\ve_i}:=e_{\l_1}(\ve_i^{-1}\,\cdot),\cdots,  e_{\l_k,\ve_i}:=e_{\l_k}(\ve_i^{-1}\,\cdot)\}$.  We have, from Bessel's inequality,
\begin{equation}\label{e3.4'}
 |\la u_{\ve_i}\phi,\tilde e_{\l_1,\ve_i}\ra|^2+\cdots+|\la u_{\ve_i}\phi,\tilde e_{\l_k,\ve_i}\ra|^2\le \frac1{|\Om|}\int_{\Om}|u_{\ve_i}\phi|^2\,dx\le C_\phi,
 \end{equation} 
 where $\la\cdot,\cdot\ra$ denotes the scalar product of $L^2(\Om;\,dx/|\Om|)$. Since $|e_{\l_1,\ve_i}-\tilde e_{\l_1,\ve_i}|^2+\cdots+|e_{\l_k,\ve_i}-\tilde e_{\l_k,\ve_i}|^2\to0$ in 
 $L^1(\Om; \,dx/|\Om|)$, as $i\to\infty$, we get \eqref{e3.4} from \eqref{e3.4'}, in the limit as $i\to\infty$. 
 
 Therefore, from any such $\FF_0$, we may obtain a countable set $\FF_*$, with $\FF_0\subset\FF_*\subset\FF$,   with the property that, for some subsequence of $\ve_i$, still denoted $\ve_i$, the limit in \eqref{e3.3} exists for all $e_\l\in\FF_*$, and it is maximal for those $e_\l\in\FF_*$ such that $|\la L_\phi,e_{\l}\ra|>0$. The latter means that we cannot enlarge the subset of $e_\l\in\FF_*$ such that $|\la L_\phi,e_{\l}\ra|>0$, by extracting a further subsequence. Indeed, we can obtain $\FF_*$ in the following way. If there is any 
 $e_\l\in\FF\setminus\FF_0$ such that 
 \begin{equation}\label{e3.5}
 \limsup_{i\to\infty}\left|\frac1{|\Om|}\int_{\Om}u_{\ve_i}(x)\phi(x)e_\l(\ve_i^{-1}x)\,dx\right|>1,
 \end{equation}
 we can take the new countable family $\FF_{0,1}:=\FF_0\cup\{e_\l\}$ and a new subsequence $\{\ve_i\}$ for which the limit in \eqref{e3.3} exists for all $e_\l\in\FF_{0,1}$.   Now, if there is $e_\mu\in \FF\setminus\FF_{0,1}$ such that \eqref{e3.5} holds, for the new subsequence $\{\ve_i\}$, we can proceed in the same way and obtain  a subsequence from the last subsequence $\{\ve_i\}$ and define a new countable family $\FF_{0,2}=\FF_{0,1}\cup\{e_\mu\}$. We can do that only a finite number of times because of \eqref{e3.4}.  In this way we obtain a countable family $\FF_{0,N_0}$ and a subsequence of $\{\ve_i\}$, which we keep denoting $\ve_i$, maximal with respect to the property \eqref{e3.5}. We then proceed in an analogous way looking for $e_\l\in\FF\setminus \FF_{0,N_0}$ such that
 \begin{equation}\label{e3.6}
 \limsup_{i\to\infty}\left|\frac1{|\Om|}\int_{\Om}u_{\ve_i}(x)\phi(x)e_\l(\ve_i^{-1}x)\,dx\right|>\frac12,
 \end{equation}
and get a new subsequence and a countable family $\FF_{0,N_1}$,  maximal  with respect to the property \eqref{e3.6}. In this way, we extract subsequences for which the limits
\eqref{e3.3} exist, and define countable families $\FF_{0,N_k}$ which are maximal with respect to its elements  satisfying 
 \begin{equation}\label{e3.7}
 \limsup_{i\to\infty}\left|\frac1{|\Om|}\int_{\Om}u_{\ve_i}(x)\phi(x)e_\l(\ve_i^{-1}x)\,dx\right|>\frac1{2^k},
 \end{equation} 
 By a diagonal process, we obtain a subsequence still denoted by $\{\ve_i\}$ and a countable family $\FF_*\subset\FF$, which is maximal for elements satisfying \eqref{e3.7}, for any $k\in\N$. In particular, 
 for any $e_\l\in\FF\setminus\FF_*$, we have 
 \begin{equation}\label{e3.8}
 \limsup_{i\to\infty}\left|\frac1{|\Om|}\int_{\Om}u_{\ve_i}(x)\phi(x)e_\l(\ve_i^{-1}x)\,dx\right|=0.
 \end{equation} 
 
Now, let $\HH$ be the closure in $\BB^2$ of the space spanned by $\FF_*$. In particular, $\HH$ is separable. Let $\HH_0$ be the subspace of $\HH$ spanned by $\FF_*$. 
Given $\phi\in\DD$, we may find a subsequence $\ve_i\downarrow0$, as $i\to\infty$, $i\in\N$, for which the following limit exists for all $g\in\FF_*$, and, hence, for all $g\in\HH_0$,
$$
\la L_\phi, g\ra :=\lim_{i\to\infty}\,\la u_{\ve_i}\phi, g_{\ve_i}\ra,\quad\text{with $g_\ve(x)=g(\ve^{-1} x)$}.
$$  
Clearly, $L_\phi$ is  a continuous linear functional over $\HH_0$, and, by density, it may be extended to $\HH$  . Since $\DD$ is countable, there is a subsequence of $\ve_i$, 
which we still label $\ve_i$, such that the following limit exists
\begin{equation}\label{e3.9}
\la L_\phi, g\ra :=\lim_{i\to\infty}\,\la u_{\ve_i}\phi, g_{\ve_i}\ra,\quad\text{for all $g\in\HH$ and all $\phi\in\DD$}.
\end{equation}
We will henceforth, with no loss of rigor, ignore the normalization factor $1/|\Om|$, and simply interpret the scalar product on the right-hand side of \eqref{e3.9} as exactly the one of $L^2(\Om)$.

Now, given any $g\in\BB^2$, we may write $g=g^\HH+\rho$, with $g^\HH\in\HH$ and $\rho\in\HH^\perp$, with equality holding in the sense of $\BB^2$. By \eqref{e3.8}, we deduce that the limit on the right-hand side  of \eqref{e3.9} exists and equals 0, if, instead of $g\in\HH$, we take $\rho\in\HH^\perp$. Therefore, the limit on the right-hand side of \eqref{e3.9} exists for all $g\in\BB^2$, and we may define 
\begin{equation}\label{e3.10}
\la L_\phi, g\ra :=\lim_{i\to\infty}\,\la u_{\ve_i}\phi, g_{\ve_i}\ra,\quad\text{for all $g\in\BB^2$ and all $\phi\in C_0^\infty(\Om)$}.
\end{equation}
Now, from the sequence $\ve_i$, we may then apply the Theorem~\ref{T:3.1}, and obtain a subnet $\ve_j(d)$, indexed by some directed set $D$, and a family of two-scale Young-measures $\nu_{z,x}$ so that the representation formula  \eqref{young} is verified.  In particular, for all $\varphi\in\AA$, $\phi\in C_0^\infty(\Om)$, we must have
\begin{equation}\label{e3.11}
\la L_\psi,\varphi\ra=\int_{\Om}\int_{\KK}\la\nu_{z,x}, \Id(\cdot)\ra \varphi(z)\phi(x)\,d\mm(z)\,dx.
\end{equation}
Therefore, given any such  weakly measurable family of two-scale Young measures $\nu_{z,x}$, obtained from an application of the Theorem~\ref{T:3.1} to the sequence 
$\{u_{\ve_i}\}$, we then define 
\begin{equation}\label{e3.12}
U(z,x):=\la \nu_{z,x},\Id(\cdot)\ra.
\end{equation}
By \eqref{e3.7}, we deduce that $U(z,x)$ satisfies \eqref{eD3.1} for the subsequence $\{u_{\ve_i}\}$.   

The fact that $U\in\BB^2(\R^n;L^2(\Om))$ follows from  the definition in \eqref{e3.12}. Indeed, we have the following
\begin{align*}
\int_{\Om}\int_{\KK}|U(z,x)|^2\,d\mm(z)\,dx &\le \int_{\Om}\int_{\KK}\la\nu_{z,x},|\l|^2\ra\,d\mm(z)\,dx\\
                            &= \lim_{i\to\infty}  \int_{\Om}\int_{\KK}\la\nu_{z,x},\min\{|\l|^2,i\}\ra\,d\mm(z)\,dx\\
                            &\le \limsup_{\ve\to0}\int_{\Om}|u_\ve|^2\,dx,
                            \end{align*}
and, thus, the assertion follows from the uniform boundedness of $u_\ve$ in $L^2(\Om;\R^m)$.        

Finally, we prove the validity of \eqref{eT3.2} when $\varphi(z)\phi(x)$ is replaced by a regular test function $\psi\in\BB^2(\R^n;L^2(\Om))$.  This may be verified by first observing that  the corresponding relation holds for $\psi\in \AA(\R^;C_0(\Om))$. Indeed, functions in $\AA(\R^;C_0(\Om))$ may be approximated in the $\sup$-norm by finite linear combinations of functions of the form $\varphi(z)\phi(x)$, with $\varphi\in\AA$ and $\phi\in C_0^\infty(\Om)$, which can be seen, for instance, through the use of a partition of unity in $\KK$ and by the identification of  $C(\KK;C_0(\Om))$ with $\AA(\R^n; C_0(\Om))$. Next, from $\AA(\R^n;C_0(\Om))$, the formula \eqref{eT3.2} can be extended to functions in 
$\AA(\R^n;C(\bar\Om))$. Indeed, given $\psi\in\AA(\R^n;C(\bar\Om))$, let $\xi\in C_0^\infty(\Om)$ be such that $0\le \xi\le1$ and $\xi(x)=1$ for $x\in\tilde\Om\Subset\Om$, with
$|\Om\setminus\tilde\Om|$ so small as we wish. Then, 
 $$
 \sup_{\ve>0}\left|\int_{\Om} u_\ve(x)\psi(\frac{x}{\ve},x)(1-\xi(x))\,dx\right|\le \|\psi\|_\infty\sup_{\ve>0}\|u_\ve\|_{L^2(\Om)}|\Om\setminus\tilde\Om|^{1/2},
$$
and since the formula holds for $\psi(z,x)\xi(x)$, we can easily conclude the validity of \eqref{eT3.2}, if $\varphi(z)\phi(x)$ is replaced by any $\psi\in\AA(\R^n;C(\bar\Om))$. 
Further, we claim that we may extend the validity of \eqref{eT3.2}, now, from test functions in $\AA(\R^n;C(\bar\Om))$ to test functions in $\BB^2(\R^n; C(\bar\Om))$. Indeed,
for any $\psi\in\BB^2(\R^n;C(\bar\Om))$, we may find $\tilde\psi\in\AA(\R^n;C(\bar\Om))$, so that $\rho(z):=\|\psi(z,\cdot)-\tilde\psi(z,\cdot)\|_{C(\bar\Om)}$ is so small as we wish in the norm of $\BB^2(\R^n)$. Therefore, from
 $$
 \limsup_{\ve\to0}\left|\int_{\Om} u_\ve(x)(\psi(\frac{x}{\ve},x)-\tilde\psi(\frac{x}{\ve},x))\,dx\right|\le \|\rho\|_{\BB^2(\R^n)}\sup_{\ve>0}\|u_\ve\|_{L^2(\Om)},
$$
we deduce the desired extension. Now, let $\psi\in\BB^2(\R^n;L^2(\Om))$ be a regular test function, and define
$$
\psi_\d(z,x):=\Medint_{B_\d(x)} \psi(z,y)\,dy,
$$    
where we extend $\psi$ as 0 for $x\in\R^n\setminus\Om$. Then, $\psi_\d\in \BB^2(\R^n; C(\bar\Om))$ and, so, \eqref{eT3.2} holds for $\psi_\d(z,x)$ replacing $\varphi(z)\phi(x)$, as we have just proved.  We then  get  the validity of \eqref{eT3.2} with  $\psi$ as test function,  by sending  $\d\to0$, using \eqref{eT3.2-2}.                     
                            
Now, we observe that, if $\Phi(z,x,\l)/(1+|\l|) \in C_S(\R^m;\BB^2(\R^n;L^2(\Om)))$, and for all $\l\in\R^m$, $\tilde\varphi(\cdot,\cdot,\l)$ is a regular test function, then 
$\Phi$ may be approximated in $C_b(\R^m;\BB^2(\R^n;L^2(\Om)))$ by finite linear combinations of functions of the form $g(\l)\psi(z,x)$ with $g(\l)/(1+|\l|)\in C_S(\R^m)$, and $\psi(z,x)$ a regular test function. Since $C_S(\R^m)$ is a separable space, we may restart the same procedure used above for $u_\ve\phi$, with $\phi\in D$, this time for 
$g(u_{\ve})\phi$, for $g(\l)/(1+|\l|)$ belonging to a countable dense subset of $C_S(\R^m)$, and $\phi\in D$. Taking a diagonal subsequence good for all such $g$ and $\phi$, we can conclude the proof of \eqref{young'} proceeding as we did for proving that the limit  in \eqref{eD3.1} keeps holding with $\varphi(z)\phi(x)$ replaced by any regular test function $\psi(z,x)\in \BB^2(\R^n;L^2(\Om))$.  Putting together all that has been proved, we arrive at the validity of \eqref{young'}, finishing the proof.

\end{proof}

\begin{remark}\label{R:3.2} In the subsequent discussion we will be mainly concerned with sequences of functions $v_\ve(x,t)$, with $(x,t)\in\Om\X[0,T)$, where $x$ will always denote the space variable and $t$ the time variable. The homogenization processes to be considered will always refer to the space variable, while the time variable may be seen as a parameter, or simply as a macroscopic coordinate which does not take part in the homogenization process. Both Theorem~\ref{T:3.1} and Theorem~\ref{T:3.2}   possess obvious analogues suitable for sequences of functions depending on $(x,t)$ for which only the variable $x$ takes part in the homogenization process.
\end{remark}

\section{Convexity, monotonicity and fundamental results}\label{S:4}

In this section we establish two  important theorems which will be the two main tools for the homogenization analysis carried out in the next section. 

Let $U\subset \R^m$ be an open convex set. We recall that $G: U\to\R$ is said to be convex if $G(\theta x+(1-\theta) y)\le \theta G(x)+(1-\theta) G(y)$, for all 
$\theta\in[0,1]$, $x,y\in U$, and it is said to be {\em strictly} convex if in the last inequality we have $<$, instead of $\le$, if $0<\theta<1$, whenever $x\ne y$. The subdifferential $\po G(x)$, of $G$ at $x$, is the set of $\l\in \R^m$ for which 
\begin{equation}\label{esub}
G(y)-G(x)\ge \l\cdot (y-x),
\end{equation}
for all $y\in U$. 

Let $\Psi:\R^n\X\Om\X\R^m\to\R$ be a function satisfying the following conditions:

\begin{enumerate}

\item[\bf{($\mathbf{\Psi 1}$)}] For all $\l\in\R^m$, $\Psi(\cdot,\cdot,\l)\in \BB^2(\R^n;L^2(\Om))$ is a regular test function ({\em cf.} Definition~\ref{D:3.2});
\medskip

\item[\bf{($\mathbf{ \Psi 2}$)}] $\Psi(z,x,\cdot)$ is strictly convex for $\,d\mm(z)\,dx$-a.e.\ $(z,x)\in\KK\X\Om$;
\medskip

\item[\bf{ ($\mathbf{\Psi3}$)}]  There exists $c>0$ and $h\in\BB^2(\R^n;L^2(\Om))$, regular test function, such that for a.e.\ $(z,x)\in\R^n\X\Om$, 
\begin{equation}\label{ePsi3}
|\Psi(z,x,\l)-\Psi(z,x,\mu)|\le |\l-\mu|\left\{ c\max \{|\l|\,,\,|\mu|\} + h(z,x)\right\}, \ \text{for all $\l,\mu\in\R^m$};
\end{equation}
\medskip

\item[\bf{($\mathbf{\Psi4}$)}] There exists $\tilde c>0$, $W, \tilde h\in\BB^2(\R^n; L^2(\Om))$, regular test functions,  such that for a.e.\ $(z,x)\in\R^n\X\Om$, 
\begin{equation}\label{ePsi4}
\Psi(z,x,\l)\ge p(z,x,\l):=\tilde c|\l|^2+W(z,x)\cdot\l+\tilde h (z,x).
\end{equation}

\end{enumerate}

We now state and prove the  theorem which is the first  of the two main tools for the homogenization analysis developed in the next section. Here, $H^1(\Om)$
 denotes, as usual, the Sobolev space of the functions in $L^2(\Om)$ whose first order distributional derivatives are in $L^2(\Om)$, endowed with its
canonic Hilbert space structure, $H_0^1(\Om)$ is the closure of $C_0^\infty(\Om)$ in $H^1(\Om)$, and $H^{-1}(\Om)$ is the dual of $H_0^1(\Om)$.  

\begin{theorem}\label{T:4.1} Suppose $\Psi(z,x,\l)$ satisfies {\bf($\mathbf{\Psi1}$)}--{\bf($\mathbf{\Psi4}$)}. Let  $\{u_\ve(x,t)\}_{\ve>0}$ and $\{w_\ve(x,t)\}_{\ve>0}$ satisfy:

\begin{enumerate}

\item[(i)] $u_\ve:\Om\X[0,T)\to \R^m$ is uniformly bounded in $L^2(0,T;H_0^1(\Om))$;
\medskip

\item[(ii)]  $w_\ve:\Om\X[0,T)\to\R^m$ is uniformly bounded in $H^1(0,T;H^{-1}(\Om))\bigcap L^2(0,T;L^2(\Om))$;
\medskip

\item[(iii)]  $w_\ve(x,t)\in \po\Psi(\frac{x}{\ve},x, u_\ve(x,t))$ for a.e.\ $(x,t)\in\Om\X[0,T)$. 
\end{enumerate}

Then, by passing to a subsequence if necessary, we have $u_\ve\to u$ in $L^p(\Om\X[0,T))$, for any $1\le p<2$, and $w_\ve\wto  w$ in $L^2(\Om\X[0,T))$, such that
\begin{equation}\label{eT4.1}
 w(x,t)\in\po \mv\Psi(x,u(x,t)), \ \text{for a.e.\ $(x,t)\in\Om\X[0,T)$, with}\  \mv\Psi(x,\l):=\int_{\KK}\Psi(z,x,\l)\,d\mm(z).
\end{equation}
   
\end{theorem}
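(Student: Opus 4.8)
```latex
The plan is to combine the two-scale Young measure machinery of Theorem~\ref{T:3.2} with the convexity structure of $\Psi$, following the classical strategy of showing that the limiting Young measures are Dirac masses. First I would apply Theorem~\ref{T:3.2} to the sequence $\{u_\ve\}$, which is uniformly bounded in $L^2(\Om\X(0,T))$ by (i), to extract a subsequence and a weakly measurable family of probability measures $\nu_{z,x,t}$ on $\R^m$, for which the representation \eqref{young'} holds (in the version with the time parameter, as per Remark~\ref{R:3.2}). Set $U(z,x,t):=\la\nu_{z,x,t},\Id\ra$, so that $u_\ve$ two-scale converges to $U$; let $u(x,t):=\mv{U}(x,t)=\int_\KK U(z,x,t)\,d\mm(z)$ be the mean, which should coincide with the weak $L^2$ limit of $u_\ve$. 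By (ii) and the Aubin--Lions-type compactness for $w_\ve$, I would also extract $w_\ve\wto w$ in $L^2(\Om\X(0,T))$.

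The crucial point, flagged in the introduction, is that $\Psi$ has quadratic growth, so the representation formula \eqref{young'} does not apply to $\Phi=\Psi$ directly; here hypotheses {\bf($\mathbf{\Psi1}$)}--{\bf($\mathbf{\Psi4}$)} must be exploited. The growth bound {\bf($\mathbf{\Psi3}$)} gives $\Psi(z,x,\l)/(1+|\l|)$ sub-linear control and, together with {\bf($\mathbf{\Psi1}$)} (regularity as a test function for each fixed $\l$), should place $\Psi$ in the admissible class of Theorem~\ref{T:3.2} after verifying the $C_S$-condition at infinity, or else be approached by a truncation argument as in the proof of Theorem~\ref{T:3.1}, where the excess mass is controlled uniformly using the $L^2$-bound on $u_\ve$ (exactly as in \eqref{eT3.1_2'}--\eqref{eT3.1_3'}). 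The lower bound {\bf($\mathbf{\Psi4}$)} guarantees that $\la\nu_{z,x,t},\Psi(z,x,\cdot)\ra$ is finite and integrable. Having extended the representation to $\Psi$, I would then pass to the limit in the subdifferential inequality \eqref{esub}: for each fixed $\mu\in\R^m$ and each nonnegative $\phi\in C_0^\infty(\Om\X(0,T))$, condition (iii) yields
\begin{equation*}
\int \big[\Psi(\tfrac{x}{\ve},x,\mu)-\Psi(\tfrac{x}{\ve},x,u_\ve)-w_\ve\cdot(\mu-u_\ve)\big]\phi\,dx\,dt\le 0.
\end{equation*}

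Passing to the two-scale limit term by term turns this into an inequality involving $\int_\KK\la\nu_{z,x,t},\,\cdot\,\ra\,d\mm$, where the linear term $w_\ve\cdot u_\ve$ requires a div-curl / compensated-compactness argument: $w_\ve$ converges weakly, $u_\ve$ two-scale converges, and their product passes to the limit because the relevant fields are orthogonal in the sense of Lemma~\ref{L:ZK0}, or more directly because $w_\ve$ does not oscillate at scale $\ve$ (it is bounded in $H^1$ in time and converges strongly enough in $x$ via the $H^{-1}$-control). The main obstacle I expect is precisely this handling of the bilinear term $\int w_\ve\cdot u_\ve\,\phi$ and its identification with $\int_\KK\la\nu_{z,x,t},w\cdot\l\ra$: one must justify that $w_\ve\cdot u_\ve\wto w\cdot u=w\cdot\mv{U}$ and that no extra two-scale correlation survives. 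Once the limiting inequality is established, strict convexity of $\Psi$ from {\bf($\mathbf{\Psi2}$)} forces $\nu_{z,x,t}$ to be a Dirac mass $\delta_{U(z,x,t)}$, and a further argument (taking $\mu=U$, using strict convexity and the mean-value/invariance structure) collapses the $z$-dependence so that $U$ is independent of $z$, i.e. $U(z,x,t)=u(x,t)$. The Dirac concentration gives $\la\nu,|\l|^2\ra=|u|^2$, hence $u_\ve\to u$ strongly in $L^2$ and therefore in every $L^p$, $1\le p<2$ (for the full $L^2$ convergence one uses that the second moments converge, or settles for $L^p$, $p<2$, as stated). Finally, taking the limit $\mu\to$ arbitrary in the surviving inequality, now with $\nu=\delta_u$, yields $w(x,t)\in\po\mv\Psi(x,u(x,t))$ with $\mv\Psi$ as in \eqref{eT4.1}, completing the proof.
```
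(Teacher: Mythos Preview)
Your overall strategy---two-scale Young measures plus the subdifferential inequality plus strict convexity---is correct, but two of the key steps are handled differently in the paper, and your versions have genuine gaps.

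First, you cannot ``extend the representation \eqref{young'} to $\Psi$'' directly. By {\bf($\mathbf{\Psi3}$)} the function $\Psi$ has \emph{quadratic} growth, so $\Psi(z,x,\l)/(1+|\l|)$ is unbounded and does not lie in the class $C_S(\R^m;\cdot)$ of Theorem~\ref{T:3.2}; and the truncation argument of Theorem~\ref{T:3.1} requires a strict gap $s<r$ between the growth exponent and the $L^r$-bound on $u_\ve$, which you do not have here ($s=r=2$). The paper circumvents this by a sandwich argument: a \emph{lower} bound $\liminf_{\ve}\int\Psi(\tfrac{x}{\ve},x,u_\ve)\varphi\ge\int_\KK\la\nu_{z,x,t},\Psi(z,x,\cdot)\ra\varphi$ comes from truncation $\Psi_i=\min\{i,\Psi\}$ and monotone convergence (using {\bf($\mathbf{\Psi4}$)} for integrability of the lower envelope), while an \emph{upper} bound $\limsup_{\ve}\int\Psi(\tfrac{x}{\ve},x,u_\ve)\varphi\le\int_\KK\Psi(z,x,u)\varphi$ comes from the subdifferential inequality with the specific choice $\mu=u(x,t)$, the weak limit, using that $\int w_\ve\cdot(u_\ve-u)\varphi\to0$. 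This last product limit is obtained via Aubin's lemma: (ii) gives $w_\ve\to w$ \emph{strongly} in $L^2(0,T;H^{-1}(\Om))$, and $u_\ve\wto u$ in $L^2(0,T;H_0^1(\Om))$, so the $H^{-1}$--$H_0^1$ pairing passes to the limit. No div-curl structure or Lemma~\ref{L:ZK0} is involved here.

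Second, the $z$-independence of $\nu_{z,x,t}$ is not a consequence of strict convexity as you suggest; it is proved \emph{first} and independently, by testing with $\ve\,\theta(u_\ve)\Delta_z g(\tfrac{x}{\ve})\varphi(x,t)$ for $\theta\in C_0^\infty(\R^m)$ and $g\in\AA$ with $\Delta g\in\AA$, integrating by parts using only the $L^2(0,T;H_0^1(\Om))$ bound on $u_\ve$, and invoking Lemma~\ref{Lapla}. This step is essential: once $\nu_{z,x,t}=\bar\nu_{x,t}$, Jensen's inequality for the convex function $\mv\Psi(x,\cdot)$ gives $\la\bar\nu_{x,t},\mv\Psi(x,\cdot)\ra\ge\mv\Psi(x,u)$, and the sandwich above then forces equality; strict convexity of $\mv\Psi$ makes $\bar\nu_{x,t}=\delta_{u(x,t)}$. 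In your order (Dirac in $\l$ first, then collapse in $z$) the Jensen step fails, because $\Psi(z,x,\cdot)$ depends on $z$ and $U(z,x,t)$ need not equal its $z$-mean $u(x,t)$ before $z$-independence is known.
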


\begin{proof} We carry out the proof in several steps.

{\em Step 1}. Let $\nu_{z,x,t}$ be a two-scale Young measure generated by a subnet of $u_\ve(x,t)$, with respect to the algebra w.m.v.\ $\AA$, where the homogenization process involves only the space variable $x$. We claim that $\nu_{z,x,t}$ is independent of $z\in\KK$;  more precisely, we have
\begin{equation}\label{e4.1}
\nu_{z,x,t}=\bar\nu_{x,t}:=\int_{\KK}\nu_{z,x,t}\,d\mm(z),\quad\text{for a.e.\ $(z,x,t)\in\KK\X\Om\X[0,T)$}.
\end{equation}
Indeed, let $\theta(u)\in C_0^\infty(\R^m)$, $\varphi(x,t)\in C_0^\infty(\Om\X(0,T))$, and $g(z)\in\AA$ such that the derivatives up to second order of $g$ also belong to $\AA$. We then have
\begin{multline}\label{e4.2}
\int_{\Om\X(0,T)}\ve\, \nabla\theta(u_\ve)\cdot\nabla_z g(\frac{x}{\ve})\varphi(x,t)\,dx\,dt\\
=-\int_{\Om\X(0,T)}  \theta(u_\ve(x,t))\left\{\ve\,\nabla_z g(\frac{x}{\ve})\cdot\nabla\varphi(x,t)+\Delta_z g(\frac{x}{\ve}) \varphi(x,t)\right\}\,dx\,dt.
\end{multline}

Taking the limit in \eqref{e4.2}, for a suitable subnet of $u_\ve$,  we obtain
\begin{equation}\label{e4.2'}
\int_{\Om\X(0,T)}\int_{\KK} \la\nu_{z,x,t}, \theta\ra \Delta g(z) \varphi(x,t)\,d\mm(z)\,dx\,dt=0.
\end{equation}
Therefore, by Lemma~\ref{Lapla}, for a.e.\ $(x,t)\in\Om\X(0,T)$, we have that  $\la\nu_{z,x,t},\theta\ra=\la \bar\nu_{x,t},\theta\ra$, for $\mm$-a.e.\ $z\in\KK$, for all $\theta\in C_0^\infty(\R^m)$, and the claim follows by a trivial approximation argument.     

{\em Step 2}. We claim that, for any subnet of $u_\ve$ generating $\nu_{z,x,t}$,  the the following  representation formula holds for 
$\Psi\in \BB^2\left(\R^n; L^2(\Om; C_b(\R^m))\right)$,
satisfying {\bf($\mathbf{\Psi1}$)}--{\bf($\mathbf{\Psi4}$)}:
\begin{equation}\label{e4.3}
\lim_{\ve\to 0} \int_{\Om\X(0,T)} \Psi(\frac{x}{\ve},x, u_\ve(x,t))\,dx\,dt=\int_{\Om\X(0,T)}\int_{\KK} \la\nu_{z,x,t},\Psi(z,x,\cdot)\ra\,d\mm(z)\,dx\,dt.
\end{equation}
Indeed, we first observe that for $\Psi$ allowing a decomposition of the form $\Psi(z,x,\l)=G(z,x)\theta (\l)$, where $G\in\BB^2(\R^n;L^2(\Om))$ is a regular test function, \eqref{e4.3} holds. This may be proved by first observing that  the corresponding relation holds for $\Psi_\d(z,x,\l)=G_\d(z,x)\theta(\l)$, with
$$
G_\d(z,x):=\Medint_{B_\d(x)} G(z,y)\,dy,
$$    
observing that $G_\d\in\BB^2(\R^n;C(\bar\Om))$ (we may extend $G$ as 0 for $x\in\R^n\setminus\Om$), and then  getting  the validity of \eqref{e4.3} for $\Psi$ allowing the referred decomposition, by sending $\d\to0$, using \eqref{eT3.2-2}.  
Next, for a general $\Psi$ satisfying  {\bf($\mathbf{\Psi1}$)}--{\bf($\mathbf{\Psi4}$)}, we first observe that, if $\z_\d$ is an approximation of the identity sequence in $\R^m$, for each fixed $\mu\in\R^m$, 
$\Psi(z,x,\mu)\z_\d(\l-\mu)$ verifies the decomposition just mentioned. Now, by using \eqref{ePsi3}, we may obtain the validity of \eqref{e4.3} for
$$
\Psi_\d(z,x,\l):= \int_{\R^m} \Psi(z,x,\mu)\z_\d(\l-\mu)\,d\mu,
$$
for any $\d>0$. Indeed, let $\xi_R(\l)\in C_0^\infty(\R^n)$ be such that $0\le \xi_R\le 1$, $\xi_R(\l)=1$, if $|\l|<R$. Then, for any $\g>0$, we may choose $R>0$ large enough so that
\begin{equation}\label{e4.3'}
\left| \int_{\Om\X(0,T)} \Psi_\d(\frac{x}{\ve},x, u_\ve(x,t))(1-\xi_R(u_\ve(x,t)))\,dx\,dt\right|<\g, \quad \text{uniformly in $\ve>0$ and $\d>0$},
\end{equation}
by the uniform boundedness of $u_\ve$ is $L^2(\Om\X(0,T)$. 

 and we may use \eqref{ePsi3} again to send $\d\to0$, proving the claim. 

{\em Step 3}. We claim that, for $\Psi(z,x,\l)$ satisfying {\bf($\mathbf{\Psi1}$)}--{\bf($\mathbf{\Psi4}$)}  and for any subnet of $u_\ve(x,t)$ generating a two-scale Young measure 
$\nu_{z,x,t}$,  for all $ \varphi\in C_0^\infty(\Om\X(0,T))$, $\varphi\ge0$,  the following relation holds:  
\begin{equation}\label{e4.4}
\liminf_{\ve\to 0} \int\limits_{\Om\X(0,T)} \Psi(\frac{x}{\ve},x, u_\ve(x,t))\varphi(x,t)\,dx\,dt\ge\int\limits_{\Om\X(0,T)}\int_{\KK} \la\nu_{z,x,t},\Psi(z,x,\cdot)\ra \varphi(x,t)\,d\mm(z)\,dx\,dt.
\end{equation}
Indeed, for $i\in\N$, let
$$
\Psi_i(z,x,\l):=\min\{i,\ \Psi(z,x,\l)\}.
$$
Observe that $\Psi_i\in\BB^2(\R^n;L^2(\Om; C_b(\R^m)))$ and satisfies the conditions {\bf($\mathbf{\Psi1}$)} and {\bf($\mathbf{\Psi3}$)}.  Now, by \eqref{ePsi4}, we  have
$$
\Psi(z,x,\l)\ge \inf_{\mu\in\R^m} \Psi(z,x,\mu)\ge \inf_{\mu\in\R^m} p(z,x,\mu):=-\frac{|W(z,x)|^2}{2\tilde c}+\tilde h(z,x).
$$
Therefore, $\Psi$ is bounded below by a function which is integrable with respect to $d\mm(z)\,dx\,dt$. Since $\Psi_i(z,x,\l)\uparrow \Psi(z,x,\l)$, from the monotone convergence theorem, applied first for $\nu_{z,x,t}$, and then for $\,d\mm(z)\,dx\,dt$, we then obtain
\begin{align*}
&\int\limits_{\Om\X(0,T)}\int_{\KK} \la\nu_{z,x,t}, \Psi(z,x,\cdot)\ra \varphi(x,t)\,d\mm(z)\,dx\,dt=\lim_{i\to\infty}\int\limits_{\Om\X(0,T)}\int_{\KK}\la\nu_{z,x,t},\Psi_i(z,x,\cdot)\ra\varphi(x,t)\,d\mm(z)\,dx\,dt\\
&\qquad=\lim_{i\to\infty}\lim_{\ve\to0}\int\limits_{\Om\X(0,T)}\Psi_i(\frac{x}{\ve},x, u_\ve(x,t))\varphi(x,t)\,dx\,dt\le \liminf_{\ve\to0}\int\limits_{\Om\X(0,T)}\Psi(\frac{x}{\ve},x, u_{\ve}(x,t))\varphi(x,t)\,dx\,dt,
\end{align*}
which proves the claim.

{\em Step 4}. Next, we claim that, for any subnet of $u_\ve(x,t)$, converging weakly  to $u(x,t)$ in $L^2(0,T; H^1(\Om))$, whose existence is guaranteed by Banach-Alaoglu theorem, and generating a two-scale Young measure $\nu_{z,x,t}$, for
$\Psi(z,x,\l)$ satisfying  {\bf($\mathbf{\Psi1}$)}--{\bf($\mathbf{\Psi4}$)},  and $ \varphi\in C_0^\infty(\Om\X(0,T))$, $\varphi\ge0$, we have
\begin{equation}\label{e4.5}
\limsup_{\ve\to0} \int\limits_{\Om\X(0,T)} \Psi(\frac{x}{\ve},x,u_\ve(x,t))\varphi(x,t) \,dx\,dt\le \int\limits_{\Om\X(0,T)}\int_{\KK} \Psi(z,x,u(x,t))\varphi(x,t)\,d\mm(z)\,dx\,dt.
\end{equation}
Indeed, by the hypothesis (ii) in the statement and Aubin's lemma (see, e.g., \cite{MR0259693}), we deduce that $w_\ve\to  w$ in $L^2(0,T;H^{-1}(\Om))$. We then have, for all 
$\varphi\in C_0^\infty(\Om\X(0,T))$, 
\begin{equation}\label{e4.6}
\lim_{\ve\to0}\int\limits_{\Om\X(0,T)} w_\ve(x,t)\cdot u_\ve(x,t)\varphi(x,t)\,dx\,dt=\int\limits_{\Om\X(0,T)} u(x,t)\cdot w(x,t)\varphi(x,t)\,dx\,dt.
\end{equation}
Now, since $w_\ve(x,t)\in\po\Psi(\frac{x}{\ve},x,u_\ve(x,t))$, a.e.\ in $\Om\X(0,T)$,  for $\varphi\ge0$, we have
\begin{equation}\label{e4.7}
\int\limits_{\Om\X(0,T)} (\Psi(\frac{x}{\ve},x,u_\ve(x,t))-\Psi(\frac{x}{\ve}, x,u(x,t)))\varphi(x,t)\,dx\,dt\le \int\limits_{\Om\X(0,T)} w_\ve(x,t)\cdot(u_\ve(x,t)-u(x,t))\varphi(x,t)\,dx\,dt .
\end{equation}
Taking the $\limsup$ on  \eqref{e4.7} and using \eqref{e4.6}, we then deduce \eqref{e4.5}, proving the claim.

{\em Step 5}. We claim that $u_\ve\to u$ in $L^p(\Om\X[0,T))$, for any $1\le p<2$.

Indeed, from \eqref{e4.5}, \eqref{e4.1}, Jensen inequality, and \eqref{e4.4}, we obtain
\begin{equation}\label{e4.8}
\begin{aligned}
& \limsup_{\ve\to0} \int\limits_{\Om\X(0,T)} \Psi(\frac{x}{\ve},x,u_\ve(x,t))\varphi(x,t) \,dx\,dt\le \int\limits_{\Om\X(0,T)}\int_{\KK} \Psi(z,x,u(x,t))\varphi(x,t)\,d\mm(z)\,dx\,dt\\
&\le\int\limits_{\Om\X(0,T)}\int_{\KK} \la \bar\nu_{x,t} , \Psi(z,x,\cdot )\ra \varphi(x,t)\,d\mm(z)\,dx\,dt\le \liminf_{\ve\to 0} \int\limits_{\Om\X(0,T)} \Psi(\frac{x}{\ve},x, u_\ve(x,t))\varphi(x,t)\,dx\,dt,
\end{aligned}
\end{equation}
from which it follows
\begin{equation} \label{e4.9}
\int\limits_{\Om\X(0,T)} \la\bar\nu_{x,t},\mv \Psi(x,\cdot)\ra\,\varphi(x,t)\,dx\,dt=\int\limits_{\Om\X(0,T)}\mv \Psi(x,u(x,t))\varphi(x,t)\,dx\,dt.
\end{equation}
Since $\mv\Psi (x,\cdot)$ is strictly convex, and $\varphi\in C_0^\infty(\Om\X(0,T))$ is arbitrary, we conclude that $\ \bar\nu_{x,t}=\d_{u(x,t)}$,
and so, by a well known fact on classical Young measures ({\em cf.}, e.g.,  \cite{MR584398}),  and the uniform boundedness of $u_\ve$ in $L^2(\Om\X(0,T))$, we conclude that, by passing to a suitable subsequence, 
$u_\ve\to u$ in $L^p(\Om\X[0,T))$, for any $1\le p<2$, as asserted.

{\em Step 6}. Finally, we claim that \eqref{eT4.1} holds. 

Indeed, since $w_\ve(x,t)\in\po\Psi(\frac{x}{\ve}, x, u_\ve(x,t))$, a.e.\ in $\Om\X(0,T)$, for any $\l\in\R^m$, $\varphi\in C_0^\infty(\Om\X(0,T))$, $\varphi\ge0$,  we have
\begin{equation}\label{e4.10}
\int\limits_{\Om\X(0,T)} \left(\Psi(\frac{x}{\ve}, x,\l)-\Psi(\frac{x}{\ve},x,u_\ve(x,t))\right)\varphi(x,t)\,dx\,dt\ge \int\limits_{\Om\X(0,T)} w_\ve(x,t)\cdot(\l-u_\ve(x,t))\varphi(x,t)\,dx\,dt.
\end{equation}
Taking the limit as $\ve\to0$ in \eqref{e4.10}, using \eqref{e4.6} and 
\begin{equation} \label{e4.11}
 \lim_{\ve\to0} \int\limits_{\Om\X(0,T)} \Psi(\frac{x}{\ve},x,u_\ve(x,t))\varphi(x,t) \,dx\,dt= \int\limits_{\Om\X(0,T)}\mv \Psi(x,u(x,t))\varphi(x,t)\,dx\,dt,
\end{equation}
which follows immediately from \eqref{e4.8}, we get
\begin{equation}\label{e4.12}
\int\limits_{\Om\X(0,T)} \left(\mv\Psi( x,\l)-\mv\Psi(x,u(x,t))\right)\varphi(x,t)\,dx\,dt\ge \int\limits_{\Om\X(0,T)}  w(x,t)\cdot(\l-u(x,t))\varphi(x,t)\,dx\,dt,
\end{equation}
from which \eqref{eT4.1} follows, finishing the proof.

 \end{proof}
\bigskip

We next prepare our way to establishing our second main tool, which will be concerned with monotonicity. 
The following lemma from  \cite{MR1987520}  extends to the context of ergodic algebras the analogous result for two-scale convergence in the periodic case,
first established in \cite{MR1185639}. For the sake of completeness, we give here a short proof that makes use of Lemmas~\ref{L:ZK} and~\ref{L:ZK0}. 

\begin{lemma}\label{L:4.1}  Let $\AA$ be an ergodic algebra in $\R^n$, and let $\{u_\ve\}$ be uniformly bounded in $ L^2(0,T; H^1(\Om))$. Then, there exists a subsequence 
$\ve_i\downarrow0$, as $i\to\infty$, and a function $u_1\in \Nu_{\pot}^2(\R^n; L^2(\Om\X(0,T)))$  such that $\nabla_x u_{\ve_i}$ two-scale converges to $\nabla_x u(x,t)+ u_1(z,x,t)$.
\end{lemma}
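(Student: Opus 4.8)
The plan is to obtain the two-scale limit of $\nabla u_\ve$ from Theorem~\ref{T:3.2} and then identify it as $\nabla_x u$ plus a mean-zero potential corrector. First I would pass to a subsequence, still denoted $\ve_i$, along which $u_{\ve_i}\wto u$ in $L^2(0,T;H^1(\Om))$ by Banach--Alaoglu. Since $\{\nabla u_{\ve_i}\}$ is uniformly bounded in $L^2(\Om\X(0,T);\R^n)$, Theorem~\ref{T:3.2} (in the form of Remark~\ref{R:3.2}, with $t$ a parameter) yields a \emph{subsequence} — not merely a subnet, which is the point of that theorem — and a function $\Xi\in\BB^2(\R^n;L^2(\Om\X(0,T);\R^n))$ such that $\nabla u_{\ve_i}$ two-scale converges to $\Xi$. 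Testing this two-scale convergence against $z$-independent functions $\phi(x,t)$ (i.e.\ $\varphi\equiv1\in\AA$) and comparing with the weak $L^2$-limit $\nabla u_{\ve_i}\wto\nabla_x u$ gives $\mv{\Xi}(x,t):=\int_\KK\Xi\,d\mm=\nabla_x u(x,t)$.

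It remains to show that $u_1:=\Xi-\nabla_x u$ is a potential field with zero mean, that is, $\Xi(\cdot,x,t)\in\BB^2_{\pot}$ for a.e.\ $(x,t)$; this is the heart of the argument. Fix indices $i,j$, a function $g\in\AA$ with all its first and second derivatives in $\AA$, and $\varphi\in C_0^\infty(\Om\X(0,T))$. Moving the derivatives $\partial_i,\partial_j$ off $u_\ve$ and using the symmetry $\partial_i\partial_j g=\partial_j\partial_i g$ to cancel the resulting singular $\tfrac1\ve$-contributions, I obtain
\begin{multline*}
\int_{\Om\X(0,T)}\bigl(\partial_i u_\ve\,(\partial_j g)(x/\ve)-\partial_j u_\ve\,(\partial_i g)(x/\ve)\bigr)\varphi\,dx\,dt\\
=-\int_{\Om\X(0,T)} u_\ve\bigl((\partial_j g)(x/\ve)\,\partial_{x_i}\varphi-(\partial_i g)(x/\ve)\,\partial_{x_j}\varphi\bigr)\,dx\,dt.
\end{multline*}
Each term on the right is then rewritten using $(\partial_k g)(x/\ve)=\ve\,\partial_{x_k}[g(x/\ve)]$ and one further integration by parts, which produces an overall factor $\ve$; since $u_\ve$ and $\nabla u_\ve$ are bounded in $L^2$ and $g$ is bounded, the right-hand side is $O(\ve)\to0$. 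On the other hand, $\partial_j g(z)\varphi(x,t)$ and $\partial_i g(z)\varphi(x,t)$ are regular test functions (Remark~\ref{R:3.1}), so the two-scale convergence of $\nabla u_{\ve_i}$ lets the left-hand side tend to $\int_{\Om\X(0,T)}\int_\KK(\Xi^i\partial_j g-\Xi^j\partial_i g)\varphi\,d\mm\,dx\,dt$. As $\varphi$ is arbitrary, this forces $\int_\KK(\Xi^i\partial_j g-\Xi^j\partial_i g)\,d\mm=0$ for a.e.\ $(x,t)$, which is precisely the potential-field condition \eqref{e2.1} for $\Xi(\cdot,x,t)$.

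Finally, the constant-in-$z$ field $\nabla_x u(x,t)$ satisfies \eqref{e2.1} trivially, because $M(\partial_j g)=\int_\KK\partial_j g\,d\mm=0$ for every admissible $g$; hence $u_1=\Xi-\nabla_x u$ also satisfies \eqref{e2.1}, so $u_1(\cdot,x,t)\in\BB^2_{\pot}$, while $\mv{u_1}=\mv{\Xi}-\nabla_x u=0$. Thus $u_1\in\Nu^2_{\pot}(\R^n;L^2(\Om\X(0,T)))$ and $\nabla u_{\ve_i}$ two-scale converges to $\nabla_x u+u_1$, as claimed. The principal obstacle is the curl identity above: one must arrange the cancellation of the singular $1/\ve$-terms through the symmetry of mixed second derivatives, and then still extract a genuine $\ve$-gain from the surviving terms, all while having only an $L^2$ (not $H^2$) bound on $\nabla u_\ve$ — which is exactly why the derivatives are shifted onto the smooth, rapidly oscillating factor $g(x/\ve)$ rather than onto $u_\ve$.
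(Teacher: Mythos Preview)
Your proof is correct, but it takes a genuinely different route from the paper's. The paper tests $\nabla u_{\ve_i}-\nabla u$ against \emph{solenoidal} fields $V$ with components in $Z(\AA)$: since $\div_z V=0$, the singular $1/\ve$-term disappears in the integration by parts, the right-hand side tends to zero, and orthogonality of $u_1$ to all of $\Nu_{\sol}^2$ together with $\mv{u_1}=0$ forces $u_1\in\Nu_{\pot}^2$ via the decomposition of Lemma~\ref{L:ZK0}. You instead verify the defining curl condition~\eqref{e2.1} for $\Xi$ directly, using the antisymmetric pairing $\partial_i u_\ve\,\partial_j g-\partial_j u_\ve\,\partial_i g$; the $1/\ve$-terms cancel by equality of mixed partials of $g$, and your second integration by parts extracts an explicit $O(\ve)$ factor from the remainder.

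Your approach is slightly more self-contained: it does not invoke the orthogonal decomposition~\eqref{e2.3}, nor the density of $Z(\AA)$-solenoidal fields in $\Nu_{\sol}^2$, and the vanishing of the right-hand side is made quantitative (the paper's corresponding step, that $\int(u_{\ve_i}-u)\nabla\phi\cdot V(x/\ve_i)\to0$, tacitly uses that the two-scale limit of $u_{\ve_i}$ is $z$-independent, which itself requires a short argument of the same flavor as yours). The paper's route, on the other hand, is a bit more conceptual and connects naturally with the Weyl-type decomposition that is used elsewhere in the analysis. Both arguments hinge on the same mechanism: choosing a test structure (solenoidal, respectively curl-type) that annihilates the $1/\ve$-contribution coming from differentiating the fast variable.
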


\begin{proof} Let $Z(A)$ be defined by \eqref{eZ(A)}. It is easy to verify that, if $f\in Z(\AA)$, then $f\in C^\infty(\R^n)$ and the equation $\Delta u=f$ has a solution $u\in Z(\AA)$ ({\em cf.} \cite{ MR1329546}). 
If $V\in\AA(\R^n;\R^n)$ is a vector field with components in $Z(\AA)$, we easily see that $\div V\in Z(\AA)$.  In particular,  there is a solution for $\Delta v=\div V$, with $v\in Z(\AA)$. 
Then, for such $V$, its orthogonal projection on $\Nu_{\sol}^2$, given by $V-\nabla v$,  is  also a vector field with components in $Z(\AA)$, so that such vector fields form a dense subspace of $\Nu_{\sol}^2$.  Let $u\in L^2(0,T; H^1(\Om))$ be the weak limit of a subsequence $u_{\ve_i}$ of $u_\ve$, whose existence is guaranteed by the boundedness of $u_\ve$ in $L^2(0,T; H^1(\Om))$. Since $\nabla_x u_{\ve_i}-\nabla_x u$ is bounded in $L^2(\Om\X(0,T))$, Theorem~\ref{T:3.2} implies that there is a subsequence of $\ve_i$, still denoted 
$\ve_i$, such that  $\nabla_x u_{\ve_i}-\nabla_x u$ two-scale converges to some $u_1\in  \BB^2(\R^n;L^2(\Om\X (0,T);\R^n))$. Now, for any $\phi\in C_0(\Om\X(0,T))$ and 
$V\in \Nu_{\sol}^2$, whose components are in $Z(\AA)$, we have
$$
\int\limits_{\Om\X(0,T)}(\nabla_x u_{\ve_i}-\nabla_x u)\cdot \phi(x,t)V(\frac{x}{\ve_i})\,dx\,dt=-\int\limits_{\Om\X(0,T)}( u_{\ve_i}- u)\nabla_x \phi(x,t)\cdot V(\frac{x}{\ve_i})\,dx\,dt.
$$
Taking the limit as $\ve_i\to0$, we obtain
$$
\int\limits_{\Om\X(0,T)}\int_{\KK} u_1(z,x,t)\cdot \phi(x,t) V(z)\,d\mm(z)\,dx\,dt=0.
$$
Since $V$ is an arbitrary element of a dense subspace of $\Nu_{\sol}^2$,  $\phi\in C_0^\infty(\Om)$ is arbitrary,  and, clearly, $\mv u_1(x,t)=0$, a.e.\ $(x,t)\in\Om\X(0,T)$,   we conclude that $u_1\in  \Nu_{\pot}^2(\R^n; L^2(\Om\X(0,T)))$, as asserted.
\end{proof}

\bigskip
  We now consider $\psi:\R^n\X\Om\X[0,T)\X\R\X\R^n\to\R$ satisfying the following conditions:
  
  \begin{enumerate}
  
   \item[{\bf($\mathbf{\psi1}$)}] For all $u\in\R$, $\eta \in\R^n$, $t\mapsto \psi(\cdot,\cdot, t, u,\eta)\in L^2\left(0,T; \BB^2(\R^n;L^2(\Om))\right)$,  and for a.e.\ $t\in[0,T)$,  $\psi(\cdot,\cdot, t, u,\eta)$
  is a regular test function;
  \medskip

  \item[{\bf($\mathbf{\psi2}$)}] $\psi(z,x,t,u,\cdot)$ is $C^1$ and convex in $\R^n$, for all $u\in\R$ and a.e.\ $(z,x,t)\in\R^n\X\Om\X(0,T)$;
  \medskip
  
  \item[{\bf($\mathbf{\psi3}$)}]  $\psi(z,x,t,\cdot,\eta)$ is continuous in $\R$, for all $\eta\in\R^n$ and  a.e.\ $(z,x,t)\in\R^n\X\Om\X(0,T)$;
  \medskip
  
\item[{\bf($\mathbf{\psi4}$)}] There exist $c_\psi>0$, $h_\psi\in L^2(\Om\X(0,T))$ such that, for all $u\in\R$, $\eta \in\R^n$, and  a.e.\ $(z,x,t)\in\R^n\X\Om\X(0,T)$ we have
  \begin{equation}\label{e4.14}
  \psi(z,x,t,u,\eta)\ge c_\psi|\eta|^2+h_\psi(x,t).
  \end{equation}
  \end{enumerate}
  
  We also consider 
  $$
  \a(z,x,t,u,\eta):=\nabla_\eta\psi(z,x,t,u,\eta), 
  $$
  for $\psi$ satisfying ($\psi1$)--($\psi4$), so that $\a(z,x,t,u,\cdot)$ is continuous and monotone, that is,
  \begin{equation}\label{ea0}
 \left( \a(z,x,t,u,\eta_1)-\a(z,x,t,u,\eta_2)\right)(\eta_1-\eta_2)\ge0, \quad \eta_1,\eta_2\in\R^n, 
  \end{equation}
  for all $u\in\R$, and  ($\,dz\,dx\,dt$)-a.e.\ $(z,x,t)\in\R^n\X\Om\X(0,T)$. We further assume:
  
  \begin{enumerate}
  
   \item[{\bf($\mathbf{\a1}$)}]  For all $u\in\R$, $\eta \in\R^n$, $t\mapsto\a(\cdot,\cdot, t, u,\eta)\in L^2\left(0,T; \BB^2(\R^n;L^2(\Om;\R^n))\right)$, and for a.e.\ $t\in[0,T)$,  each component of 
 $\a(\cdot,\cdot, t, u,\eta)$ is a regular test function;
  \medskip
  
 \item[{\bf($\mathbf{\a2}$)}]  $\a(z,x,t,\cdot,\eta)$ is continuous for all $\eta\in\R^n$, and a.e.\ $(z,x,t)\in\KK\X\Om\X(0,T)$;
 \medskip
 
 \item[{\bf($\mathbf{\a3}$)}] There exist $c_\a>0$, $h_\a\in L^1(\Om\X(0,T))$ such that for all $u\in\R$, $\eta\in\R^n$, and a.e.\ $(z,x,t)\in\R^n\X\Om\X(0,T)$, we have
\begin{equation}\label{e4.15}
 \a(z,x,t,u,\eta)\cdot \eta\ge c_\a |\eta|^2 +h_\a(x,t);
 \end{equation}
 
\medskip
 \item[{\bf($\mathbf{\a4}$)}] There exist $\tilde c_\a>0$, $\tilde h_\a\in L^2(\Om\X(0,T))$ such that for all $u\in\R$, $\eta\in\R^n$, and a.e.\ $(z,x,t)\in\R^n\X\Om\X(0,T)$, we have
  \begin{equation}\label{e4.15'}
  |\a(z,x,t,u,\eta)|\le c_\a (|u|+|\eta|) +h_\a(x,t);
  \end{equation}
 \medskip

 \item[{\bf($\mathbf{\a5}$)}] There exist $d_\a>0$, $0<\s<1$, such that, for all $v_1,v_2\in\R$, $\eta_1,\eta_2\in\R^n$, and for a.e.\  $(z,x,t)\in\R^n\X\Om\X(0,T)$, 
 \begin{equation}\label{e4.16}
  |\a(z,x,t,v_1,\eta_1)-\a(z,x,t,v_2,\eta_2)|\le d_\a(|v_1-v_2|^\s+|\eta_1-\eta_2|).
  \end{equation}
 \end{enumerate}

For $\psi$ satisfying {\bf($\mathbf{\psi1}$)}--{\bf($\mathbf{\psi4}$)}, let us define
\begin{equation}\label{e4.17}
\psi_0(x,t,v,\eta):= \inf_{\eta_1\in\Nu_{\pot}^2} \int_{\KK}\psi(z,x,t,v,\eta+\eta_1(z))\,d\mm(z), \quad  v\in\R,\ \eta\in\R^n, \text{a.e.\ $(x,t)\in\Om\X(0,T)$}.
\end{equation}

The following is a straightforward adaptation of lemma~5.2 of \cite{MR2349874} to extend the latter to the context of ergodic algebras.  Let $\psi,\a,\psi_0$ be as we just described.

\begin{lemma}\label{L:4.2} Assume that $\xi,\eta\in L^2(0,T;\BB^2(\R^n;L^2(\Om)))$ and that, for a.e.\ $t\in(0,T)$, $\xi(\cdot,\cdot,t), \eta(\cdot,\cdot,t)$ are regular test functions. 
Assume further, that $\xi(\cdot,x,t)\in\BB_{\pot}^2$ and $\eta(\cdot,x,t)\in\BB_{\sol}^2$, for a.e.\ $(x,t)\in\Om\X(0,T)$, and let $u\in L^2(\Om\X(0,T))$. If 
\begin{equation}\label{eL4.2}
\eta(z,x,t)=\a(z,x,t, u(x,t), \xi(z,x,t)),\quad\text{for a.e.\ $(z,x,t)\in\R^n\X\Om\X(0,T)$},
\end{equation}
then $\mv \eta(x,t)\in\partial\psi_0(x,t,u(x,t),\mv \xi(x,t))$ for   a.e.\ $(x,t)\in\Om\X(0,T)$.
\end{lemma}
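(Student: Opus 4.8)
The plan is to reduce the statement to the pointwise subgradient inequality for the convex function $\psi(z,x,t,u,\cdot)$, combined with the orthogonality of potential and solenoidal fields recorded in \eqref{e2.4}. I would fix a point $(x,t)$ in the full-measure subset of $\Om\X(0,T)$ on which all hypotheses hold simultaneously, and suppress the frozen arguments $x,t$ and $u=u(x,t)$, writing $\psi(z,\omega)$ for $\psi(z,x,t,u,\omega)$, $\a(z,\omega)$ for $\a(z,x,t,u,\omega)$, and $\xi(z)$ for $\xi(z,x,t)$. Since $\mv\xi$ is constant in $z$ and $\xi\in\BB_{\pot}^2$, the corrector $\tilde\xi:=\xi-\mv\xi$ lies in $\Nu_{\pot}^2$; this is the field I expect to be optimal in \eqref{e4.17} at $\eta=\mv\xi$.

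First I would invoke {\bf($\mathbf{\psi2}$)}: since $\psi(z,\cdot)$ is $C^1$ and convex with gradient $\a$, for every $\omega\in\R^n$ and $\mm$-a.e.\ $z$ we have $\psi(z,\omega)\ge \psi(z,\xi(z))+\a(z,\xi(z))\cdot(\omega-\xi(z))$. Choosing $\omega=\zeta+\eta_1(z)$ for arbitrary $\zeta\in\R^n$ and $\eta_1\in\Nu_{\pot}^2$, replacing $\a(z,\xi(z))$ by $\eta(z)$ via the hypothesis \eqref{eL4.2}, and integrating $d\mm$ over $\KK$, I obtain
$$\int_\KK\psi(z,\zeta+\eta_1)\,d\mm\ge \int_\KK\psi(z,\xi)\,d\mm+\mv\eta\cdot(\zeta-\mv\xi)+\int_\KK\eta\cdot(\eta_1-\tilde\xi)\,d\mm.$$
The integrability hypotheses {\bf($\mathbf{\psi1}$)}, {\bf($\mathbf{\psi4}$)} and the growth bound {\bf($\mathbf{\a4}$)}, together with $\xi,\eta\in\BB^2$, guarantee that each integrand belongs to $\BB^1$, so these integrals are finite. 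The decisive point is the last term: here $\eta\in\BB_{\sol}^2$ while $\eta_1-\tilde\xi\in\Nu_{\pot}^2\subset\BB_{\pot}^2$, so \eqref{e2.4} forces it to equal $\mv\eta\cdot\mv{(\eta_1-\tilde\xi)}=0$.

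Next I would specialize to $\zeta=\mv\xi$: the inequality gives $\int_\KK\psi(z,\mv\xi+\eta_1)\,d\mm\ge\int_\KK\psi(z,\xi)\,d\mm$ for all $\eta_1\in\Nu_{\pot}^2$, and since $\tilde\xi\in\Nu_{\pot}^2$ attains equality, I conclude $\psi_0(x,t,u,\mv\xi)=\int_\KK\psi(z,\xi)\,d\mm$, i.e.\ $\tilde\xi$ minimizes \eqref{e4.17}. Substituting this identity into the previous display and taking the infimum over $\eta_1\in\Nu_{\pot}^2$ on the left then yields, for every $\zeta\in\R^n$,
$$\psi_0(x,t,u,\zeta)\ge\psi_0(x,t,u,\mv\xi)+\mv\eta\cdot(\zeta-\mv\xi),$$
which is exactly the assertion $\mv\eta(x,t)\in\partial\psi_0(x,t,u(x,t),\mv\xi(x,t))$. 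Because the exceptional null set of $(x,t)$ was chosen independently of $\zeta$, and $\psi_0(x,t,u,\cdot)$ is a finite convex (hence continuous) function, a density argument over a countable dense set of $\zeta$ upgrades this to hold for all $\zeta$ and a.e.\ $(x,t)$.

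The conceptual core, and the only place where the structure of the algebra genuinely enters, is the identification $(\Nu_{\pot}^2)^\perp=\BB_{\sol}^2$ coming from the orthogonal decomposition $\BB^2=\Nu_{\pot}^2\oplus\Nu_{\sol}^2\oplus\R^n$ of Lemma~\ref{L:ZK0}: it is precisely this that turns the hypothesis that $\eta=\a(\cdot,\cdot,u,\xi)$ is solenoidal into the Euler--Lagrange optimality of $\tilde\xi$ in \eqref{e4.17}, and it is what makes the cross term vanish. I do not anticipate a serious obstacle beyond this; the remaining work is the routine verification that all $\KK$-integrals are finite under the coercivity and growth hypotheses {\bf($\mathbf{\psi4}$)} and {\bf($\mathbf{\a4}$)}, and the harmless density-in-$\zeta$ bookkeeping needed to pass from ``for each fixed $\zeta$'' to ``for all $\zeta$'' on a common full-measure set of $(x,t)$.
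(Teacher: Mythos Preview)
Your proposal is correct and follows essentially the same approach as the paper: both apply the pointwise convexity inequality for $\psi(z,x,t,u,\cdot)$ at the base point $\xi(z)$ against a competitor $\lambda+\eta_1(z)$, integrate over $\KK$, and kill the cross term via the orthogonality \eqref{e2.4}. The only difference is that you additionally verify the equality $\psi_0(x,t,u,\mv\xi)=\int_\KK\psi(z,\xi)\,d\mm$ (i.e.\ that $\tilde\xi$ actually attains the infimum), whereas the paper proceeds directly with the trivial inequality $\psi_0(x,t,u,\mv\xi)\le\int_\KK\psi(z,\xi)\,d\mm$; your extra step is a nice structural observation but not needed for the conclusion.
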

\begin{proof}
Let $\eta_1\in{\mathcal V}^2_{{\pot}},$  and $ \lambda\in\R$. Using~\eqref{eL4.2}, and the orthogonality of the spaces $\BB^2_{{\pot}}$ and $\BB^2_{{\sol}}$, see~\eqref{e2.4},  we obtain
$$
\int_{\KK}\psi\left(z,x,t,u(x,t),\xi(z,x,t)\right)\,d\mm(z)-
\int_{\KK}\psi\left(z,x,t,u(x,t), \lambda+\eta_1(z)\right)\,d\mm(z)\le \mv{\eta}\,(\mv{\xi}(x,t)-\lambda).
$$
From the definition of $\psi_0$, we have that $\psi_0\left(x,t,u(x,t),\mv{\xi}(x,t)\right)\le\int_{\KK}\psi\left(z,x,t,u(x,t),\xi(z,x,t)\right)\,d\mm(z)$, and therefore
\begin{equation}\label{eL4.2-2}
\psi_0\left(x,t,u(x,t), \mv{\xi}(x,t)\right)-\mv{\eta}\,(\mv{\xi}(x,t)-\lambda)
\le \int_{\KK}\psi\left(z,x,t,u(x,t),\lambda+\eta_1(z)\right)\,d\mm(z).
\end{equation}
Taking the infimum  in \eqref{eL4.2-2} with respect to $\eta_1$, we obtain the desired result.
\end{proof}

We finally state and prove the theorem which is the second main tool for the homogenization analysis developed in the next section.

\begin{theorem} \label{T:4.2} Let $u_{\ve_i}$, $\ve_i\downarrow 0$, be a sequence uniformly bounded in  $L^2(0,T: H^1(\Om))$, strongly converging in $L^p(\Om\X(0,T))$ for any $1\le p<2$. By passing to a subsequence, if necessary, we may assume that $\nabla_x u_{\ve_i}-\nabla_x u$ two-scale converges to $u_1\in \BB^2(\R^n; L^2(\Om\X(0,T)))$. Let 
\begin{equation}\label{eT4.2-1}
q_\ve(x,t):=\a(\frac{x}{\ve},x,t,u_\ve(x,t),\nabla_x u_\ve(x,t)),
\end{equation}
where $\a=\nabla_\eta\psi$, with $\psi$ and $\a$ satisfying {\bf($\mathbf{\psi1}$)}--{\bf($\mathbf{\psi4}$)}, {\bf($\mathbf{\a1}$)}--{\bf($\mathbf{\a5}$)}. Again, passing to a further subsequence, if necessary, we may also assume that $q_{\ve_i}$ two-scale converges to 
$q\in\BB^2(\R^n;L^2(\Om\X(0,T)))$. Suppose we have
\begin{equation}\label{eT4.2-2}
\lim_{\ve\to 0}\int_0^T\bigg(\int\limits_{\Omega\X(0,t)}q_\ve\cdot\nabla u_\ve\,dx\,ds\bigg)\,dt=
\int_0^T\int\limits_{\Omega\X(0,t)}\int_{\KK}q(z,x,t)\cdot\big(\nabla u(x,s)+u_1(z,x,s)\big)\,d\mm(z)\,dx\,ds\,dt.
\end{equation}
Then, $q(z,x,t)=\a(z,x,t, u(x,t), \nabla_x u(x,t)+u_1(z,x,t))$, ($d\mm(z)\,dx\,dt$)-a.e.\ in $\KK\X\Om\X (0,T)$. In particular,  
\begin{equation}\label{eT4.2-3}
\mv q(x,t)\in\po\psi_0(x,t,u(x,t),\nabla_x u(x,t)),\quad\text{for a.e.\ $(x,t)\in\Om\X(0,T)$},
\end{equation}
where $\psi_0$ is defined by \eqref{e4.17}.
\end{theorem}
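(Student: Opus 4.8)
The plan is to identify $q$ pointwise with $\a(z,x,t,u,\nabla u+u_1)$ by a Minty--Browder monotonicity argument, the nonnegative weight needed for monotonicity being supplied by the hypothesis \eqref{eT4.2-2}. First I would rewrite that hypothesis: by Fubini's theorem, since integrating in the outer time variable produces the weight $(T-t)\ge 0$, \eqref{eT4.2-2} is equivalent to
\begin{equation*}
\lim_{\ve\to0}\int_{\Om\X(0,T)}(T-t)\,q_\ve\cdot\nabla u_\ve\,dx\,dt=\int_{\Om\X(0,T)}\int_\KK (T-t)\,q(z,x,t)\cdot\big(\nabla u(x,t)+u_1(z,x,t)\big)\,d\mm(z)\,dx\,dt.
\end{equation*}
This energy identity is the only use of \eqref{eT4.2-2}, and the strict positivity of $(T-t)$ on $(0,T)$ is what will upgrade an integrated inequality into an a.e.\ identity at the end.

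Next, for a regular test field $\zeta(z,x,t)$ taken from a convenient dense class (finite sums $\sum_k\chi_k(x,t)g_k(z)$, with $\chi_k\in C_0^\infty(\Om\X(0,T))$ and $g_k\in\AA$), write $\zeta_\ve(x,t):=\zeta(\tfrac{x}{\ve},x,t)$ and invoke the monotonicity \eqref{ea0} of $\a(\tfrac{x}{\ve},x,t,u_\ve,\cdot)$ together with $(T-t)\ge0$ to obtain, for every $\ve>0$,
\begin{equation*}
0\le\int_{\Om\X(0,T)}(T-t)\Big(\a(\tfrac{x}{\ve},x,t,u_\ve,\nabla u_\ve)-\a(\tfrac{x}{\ve},x,t,u_\ve,\nabla u+\zeta_\ve)\Big)\cdot\big(\nabla u_\ve-\nabla u-\zeta_\ve\big)\,dx\,dt.
\end{equation*}
I would then pass to the limit term by term. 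The term $\int(T-t)\,q_\ve\cdot\nabla u_\ve$ is controlled by the rewritten \eqref{eT4.2-2}; the term $\int(T-t)\,q_\ve\cdot(\nabla u+\zeta_\ve)$ converges by the two-scale convergence of $q_\ve$ tested against the regular test function $(T-t)(\nabla u+\zeta)$. In the two remaining terms I first replace $u_\ve$ by $u$ in the first slot of $\a$; by \eqref{e4.16} the error is bounded by $d_\a\,\||u_\ve-u|^\s\|_{L^2}\big(\|\nabla u_\ve\|_{L^2}+\|\nabla u+\zeta_\ve\|_{L^2}\big)$, which tends to $0$ because $\s<1$ and $u_\ve\to u$ strongly in $L^p(\Om\X(0,T))$ for every $p<2$, so that $\int|u_\ve-u|^{2\s}\to0$ while $\nabla u_\ve$ stays bounded in $L^2$. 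After the replacement the coefficient $\a(\tfrac{x}{\ve},x,t,u,\nabla u+\zeta_\ve)=b(\tfrac{x}{\ve},x,t)$ is itself a regular test function, so pairing it with the two-scale convergent $\nabla u_\ve$ (limit $\nabla u+u_1$) and with $\nabla u+\zeta_\ve$ (limit $\nabla u+\zeta$) gives the expected two-scale integrals. Collecting the four limits, everything combines into
\begin{equation*}
0\le\int_{\Om\X(0,T)}\int_\KK(T-t)\,\big(q-\a(z,x,t,u,\nabla u+\zeta)\big)\cdot\big(u_1-\zeta\big)\,d\mm(z)\,dx\,dt.
\end{equation*}

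With this inequality I would run the Minty step. By the growth bound \eqref{e4.15'} and the continuity \eqref{e4.16} in the last two variables, $\zeta\mapsto\a(\cdot,\cdot,\cdot,u,\nabla u+\zeta)$ is continuous on $L^2\big(0,T;\BB^2(\R^n;L^2(\Om;\R^n))\big)$, so the inequality extends from the dense class to all $\zeta$ in that space. Taking $\zeta=u_1-\l v$ with $\l>0$ and $v$ arbitrary gives $u_1-\zeta=\l v$; dividing by $\l$ and letting $\l\to0^+$ (dominated convergence, using the Lipschitz-in-$\eta$ part of \eqref{e4.16}) yields $0\le\int\int_\KK(T-t)\,(q-\a(z,x,t,u,\nabla u+u_1))\cdot v$. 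As $v$ is arbitrary this is an equality for every $v$, whence $(T-t)(q-\a(z,x,t,u,\nabla u+u_1))=0$ and, since $T-t>0$ on $(0,T)$, the first assertion $q=\a(z,x,t,u,\nabla u+u_1)$ holds a.e. The second assertion is then Lemma~\ref{L:4.2} applied with $\xi:=\nabla u+u_1$ and $\eta:=q$: here $\xi(\cdot,x,t)\in\BB^2_{\pot}$ since $u_1\in\Nu^2_{\pot}$ and constant fields are potential, $\mv\xi=\nabla u$, $\eta=\a(\cdot,x,t,u,\xi)$ by the first assertion, and $q(\cdot,x,t)\in\BB^2_{\sol}$, so that $\mv q\in\po\psi_0(x,t,u,\nabla u)$ with $\psi_0$ as in \eqref{e4.17}.

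The step I expect to be the main obstacle is the limit passage in the two mixed terms: one must simultaneously justify the replacement of $u_\ve$ by $u$ inside $\a$—which is exactly where the Hölder exponent $\s<1$ of \eqref{e4.16} and the strong $L^p$ convergence ($p<2$) of $u_\ve$ are indispensable, since they force $\||u_\ve-u|^\s\|_{L^2}\to0$ against the only $L^2$-bounded factor $\nabla u_\ve$—and verify that the resulting oscillatory coefficient $\a(\tfrac{x}{\ve},x,t,u,\nabla u+\zeta_\ve)$ is an admissible regular test function for two-scale convergence; this is transparent for the structured $\zeta$ above and is transported to general $\zeta$ by the density and continuity argument. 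A subordinate point is the solenoidality $q(\cdot,x,t)\in\BB^2_{\sol}$ needed to invoke Lemma~\ref{L:4.2}, which is furnished by the divergence bound on the fluxes $q_\ve$ available in the homogenization setting of the next section.
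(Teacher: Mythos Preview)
Your proposal is correct and follows essentially the same approach as the paper's proof: a Minty--Browder argument in the two-scale setting, with the replacement of $u_\ve$ by $u$ inside $\a$ controlled by {\bf($\a5$)} and the strong $L^p$ convergence, followed by Lemma~\ref{L:4.2}. The only cosmetic difference is that you convert the double time integral $\int_0^T\!\int_0^t(\cdots)\,ds\,dt$ into a single integral with the weight $(T-t)$ via Fubini, whereas the paper keeps the iterated form throughout; and in the Minty step you first extend the key inequality to all $\zeta$ by density and then take $\zeta=u_1-\l v$, while the paper takes $\phi=v_\d\pm\d\,\varphi(x,t)\sigma(z)e_i$ with $v_\d\to\nabla u+u_1$ and sends $\d\to0$, performing the approximation and the Minty linearization simultaneously. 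Both routes lead to the same pointwise identification of $q$.

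Your identification of the delicate point is accurate: the passage to the limit in the mixed terms hinges on $\a(\tfrac{x}{\ve},x,t,u,\phi_\ve)$ (or $\a(\tfrac{x}{\ve},x,t,u,\nabla u+\zeta_\ve)$ in your formulation) being admissible against the two-scale convergent sequence $\nabla u_\ve$, and the paper is equally informal here. Your remark on the solenoidality of $q(\cdot,x,t)$, needed for Lemma~\ref{L:4.2} but established only in the next section (see \eqref{e5.13-0}--\eqref{e5.13-1}), is also well taken; the paper's proof of Theorem~\ref{T:4.2} invokes Lemma~\ref{L:4.2} without verifying this hypothesis inside the theorem itself.
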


\begin{proof}  Given $\phi\in\AA\left(\R^n; C_0^\infty(\Omega\X(0,T);\R^n)\right)$ and using the monotonicity of $\alpha$, we get
\begin{equation}\label{eT4.2-6}
0\leq \int_0^T\bigg(\int\limits_{\Omega\X(0,t)}\bigg(q_\ve-\alpha\left(\frac{x}{\ve},x,s,u_\ve, \phi_\ve(x,s)\right)\bigg)\cdot \left(\nabla u_\ve-\phi_\ve\right)\,dx\,ds\bigg)\,dt,
\end{equation}
where $\phi_\ve(x,t):=\phi\left(\frac{x}{\ve},x,t\right)$.

However,
\begin{eqnarray*}
&&\int_0^T\bigg(\int\limits_{\Omega\X(0,t)}\bigg|\left(\alpha\left(\frac{x}{\ve},x,s,u_\ve,\phi_\ve(x,s)\right)-
\alpha\left(\frac{x}{\ve},x,s, u,\phi_\ve(x,s)\right)\right)\cdot
\left( \nabla u_\ve-\phi_\ve\right)\bigg|\,dx\,ds\bigg)\,dt\leq\\
&&\qquad\int_0^T\bigg(\int\limits_{\Omega\X(0,t)}\bigg|\alpha\left(\frac{x}{\ve},x,s, u_\ve,\phi_\ve(x,s)\right)-
\alpha\left(\frac{x}{\ve},x,s, u,\phi_\ve(x)\right)\bigg|^2\,dx\,ds\bigg)^{1/2} \\
&&\qquad\qquad\times \bigg(\int\limits_{\Omega\X(0,t)}\bigg(\left|\nabla u_\ve-\phi_\ve\right|^2\,dx\,ds\bigg)^{1/2}\,dt.
\end{eqnarray*}
Since $u_\ve$ is bounded in $L^2\left(0,T;H^1(\Omega)\right)$,  and \eqref{e4.16}  holds, from the   strong convergence of $u_\ve$ to $u$ in $L^p(\Omega\X(0,T))$, for $1\le p<2$, we obtain from the last inequality
\begin{eqnarray}\label{eT4.2-7}
&&\limsup_{\ve\to 0}\int_0^T\bigg(\int\limits_{\Omega\X(0,t)}\bigg|\left(\alpha\left(\frac{x}{\ve},x,s,u_\ve, \phi_\ve(x,s)\right)-
\alpha\left(\frac{x}{\ve},x,s,u,\phi_\ve(x,s)\right)\right)\cdot
\left( \nabla u_\ve-\phi_\ve\right)\bigg|\,dx\,ds\bigg)\,dt\leq\nonumber\\
&&\qquad\qquad\qquad C \limsup_{\ve \to 0}\int_0^T\bigg(\int\limits_{\Omega\X(0,t)}\left|u_\ve-u\right|^{2\sigma}\,dx\,ds\bigg)^{1/2}\,dt=0.
\end{eqnarray}
Then, by Lemma~\ref{L:4.1}, 
\begin{multline}
\lim_{\ve\to 0}\int_0^T\bigg(\int\limits_{\Omega\X(0,t)}
\alpha\left(\frac{x}{\ve},x,s,u_\ve, \phi_\ve(x,s)\right)\cdot \left( \nabla u_\ve-\phi_\ve\right)\,dx\,ds\bigg)\,dt=\\
\lim_{\ve\to 0}\int_0^T\bigg(\int\limits_{\Omega\X(0,t)}
\alpha\left(\frac{x}{\ve},x,s,u,\phi_\ve(x,s)\right)\cdot \left(\nabla u_\ve-\phi_\ve \right)\,dx\,ds\bigg)\,dt=\\
\int_0^T\bigg(\int\limits_{\Omega\X(0,t)\times\KK}
\alpha\left(z,x,s,u, \phi(z,x,s)\right)\cdot
\left(\nabla u(x,s)+u_1(z,x,s)-\phi(z,x,s))\right)\,d\mm(z)\,dx\,ds\bigg)\,dt.
\end{multline}
Set $v(z,x,t):= \nabla u(x,t)+u_1(z,x,t)$. 
From the last  equality, \eqref{eT4.2-2}, and  \eqref{eT4.2-6}, we obtain
\begin{equation}\label{eT4.2-8}
0\leq \int_0^T\bigg(\int\limits_{\Omega\X(0,t)\times\KK}\bigg(q-\alpha\left(z,x,s,u(x,s),\phi(z,x,s)\right)\bigg)\cdot \left(v(z,x,t)-\phi(z,x,s) \right)\,d\mm(z)\,dx\,ds\bigg)\,dt.
\end{equation}
Now, we take, in \eqref{eT4.2-8},  
$$
\phi(z,x,t)=v_\d(z,x,t)\pm\delta \varphi(x,t) \sigma(z) e_i, 
$$
 where  $\delta>0$, $(\sigma, \varphi)\in \AA\times C_0^{\infty}(\Omega\X(0,T))$, $e_i$ is the $i$-th element of the canonical basis in $\R^n$, 
and $v_\d$ is a sequence in $\AA(\R^n; C_0^\infty(\Om\X(0,T);\R^n))$ converging to $v$ in $\BB^2(\R^n;L^2(\Om\X(0,T);\R^n))$, as $\d\to0$. 
We get
$$
0\leq \int_0^T\bigg(\int\limits_{\Omega\X(0,t)\times\KK}\bigg(q-\alpha\left(z,x,s,u(x,s), -v_\d\mp\delta\varphi(x,t)\sigma(z)e_i\right)\bigg)\cdot \left(\pm \varphi(x,t)\sigma(z)e_i\right)\delta\,d\mm(z)\,dx\,ds\bigg)\,dt.
$$
Dividing the last inequality by $\delta$ and letting $\delta\to 0$, using again \eqref{e4.16},  yields
$$
\int_0^T\bigg(\int\limits_{\Omega\X(0,t)\times\KK}\big(q-\alpha\left(z,x,s,u(x,s), v(z,x,s)\right)\big)\cdot e_i\,\varphi(x,t)\,\sigma(z)\,d\mm(z)\,dx\,ds\bigg)\,dt=0,
$$
and therefore, we conclude that $q(z,x,t)=\alpha\left(z,x,t,u(x,t), v(z,x,t)\right)$.  Finally,  by the Lemma~\ref{L:4.2} we have that
$\overline{q}(x,t)\in\partial\psi_0\left(x,t, u(x,t), \nabla u(x,t)\right)$ for a.e.\ $(x,t)\in\Omega\X(0,T)$,  completing  the proof of theorem.
\end{proof}

We conclude this section by recalling  some facts about  the conjugate function $G^*$ of a convex function $G:\R^m\to\R$, which we will be used in the next section. 

We recall that $G^*$ is defined by
$$
G^*(v):=\sup \{ v\cdot\xi-G(\xi)\,:\, \xi\in\R^n\}.
$$
We may verify, from the definition, that $w\in\partial G(u)$ if, and only if,
\begin{equation}\label{e4.conj}
G(u)+G^*(w)=u\cdot w.
\end{equation}
For real valued convex functions we also have $G^{**}(u)=G(u)$,   and, so, $w\in\po G(u)$ if, and only if, $u\in\po G^*(w)$. 
We will also use the fact that if $G:\R^m\to\R$ is strictly convex, then $G^*$ is everywhere differentiable, and so $w\in\partial G(u)$ implies $u=\nabla G^*(w)$. 
For all these facts about convex functions we refer to, {\em e.g.}, \cite{MR0274683,MR2798533}.

In particular, if $w\in H^1(0,T; H^{-1}(\Om))\bigcap L^2(0,T; L^2(\Om))$,  $u\in L^2(0,T; H_0^1(\Om))$,  $G:\R\to\R$ is strictly convex, and $w(x,t) \in \partial G(u(x,t))$, for a.e.\ $(x,t)\in\Om\X(0,T)$, we have
\begin{equation}\label{e4.conj2}
\int_\tau^t \la w_t (s),u(s) \ra_{H^{-1}, H_0^1}\,dt = \int_{\Om}(G^*(w(t))-G^*(w(\tau)))\,dx,\quad\text{for a.e.\ $0<\tau<t<T$},
\end{equation}
which may be easily deduced, by approximation, from the trivial smooth case.  

 \section{Homogenization of the generalized Stefan problem.}\label{S:5}

 In this section we apply the framework developed in the preceding sections to analyze the homogenization of the following initial-boundary value problem 
 \begin{equation}\label{e5.1}
 \begin{aligned}
 &\po_t w_\ve -\nabla\cdot \a(\frac{x}{\ve},x,t, u_\ve,\nabla u_\ve)=f(\frac{x}{\ve}, x, u_\ve),\\
 & w_\ve(x,t)\in \partial \Psi(\frac{x}{\ve},x,u_\ve),\qquad \text{a.e.\ in $\Om\X(0,T)$},\\
 &w_\ve(x,0)=w_0(\frac{x}{\ve},x),\qquad x\in\Om,\\
 & u_\ve=0,\qquad \text{in $\po\Om\X(0,T)$}.
 \end{aligned}
\end{equation}
Here, $\Psi$ and  $\a$ satisfy {\bf($\mathbf{\Psi1}$)}--{\bf($\mathbf{\Psi4}$)}, $m=1$, {\bf($\mathbf{\psi1}$)}--{\bf($\mathbf{\psi4}$)}, and {\bf($\mathbf{\a1}$)}-{\bf($\mathbf{\a5}$)} in Section~\ref{S:4}. For $f$ we assume
\begin{enumerate}

\item[{\bf($\mathbf{f1}$)}] $f(z,x,t,\cdot)$ is continuous for a.e.\ $(z,x,t)\in\R^n\X\Om\X(0,T)$;
\medskip
\item[{\bf($\mathbf{f2}$)}] $t\mapsto f(\cdot,\cdot,t , u)\in L^2(0,T; \BB^2(\R^n;L^2(\Om)))$ and  $f(\cdot,\cdot,t, u)$ is a regular test function for a.e.\ $t\in(0,T)$; 
\medskip
\item[{\bf($\mathbf{f3}$)}] There exists $c_f>0$, $h_f\in L^s(\Om\X(0,T))$, with $2<s\le\infty$, and $0<\s<1$ such that for all $u\in\R$ and ,
\begin{equation}\label{e5.2}
|f(z,x,t,u)|\le c_f |u|^\s+h_f(x,t),\qquad \text{for a.e.\ $(z,x,t)\in\R^n\X\Om\X(0,T)$}.
\end{equation}

\end{enumerate}

Let us also assume that  $w_0\in \BB^2(\R^n;L^2(\Om))\cap L_\loc^2(\R^n\X\Om)$.

\begin{definition}\label{D:5.1}   We say that the pair $(u_\ve(x,t),w_\ve(x,t))$ is a weak solution of \eqref{e5.1}, in $\Om\X(0,T)$, if 
\begin{align}
& u_\ve\in L^2(0,T;H_0^1(\Om)),\  w_\ve\in H^1(0,T; H^{-1}(\Om))\cap L^2(\Om\X(0,T)),\label{e5.3}\\
& w_\ve\in\po\Psi (\frac{x}{\ve}, x,u_\ve) \qquad \text{for a.e.\ $(x,t)\in \Om\X(0,T)$}, \label{e5.4}
\end{align}
and for all $v\in H^1(0,T; L^2(\Om))\cap L^2(0,T; H_0^1(\Om))$, with $v(\cdot,T)=0$, a.e.\ in $\Om$, we have
\begin{equation}\label{e5.5}
\int\limits_{\Om\X(0,T)} \left((w_\ve-w_0) v_t-q_\ve\cdot \nabla v+f_\ve v\right)\, dx\,dt=0,
\end{equation}
where we set
\begin{align*}
& q_\ve(x,t)= \a(\frac{x}{\ve},x,t,u_\ve,\nabla u_\ve),\\
& F_\ve(x,t)=f(\frac{x}{\ve},x,t,u_\ve).
\end{align*}
\end{definition}

Existence of a weak solution for the problem \eqref{e5.1}, in the sense of the Definition~\ref{D:5.1}, has been proved by Visintin in  \cite{MR2349874}. Here we will be only concerned with the homogenization problem related with~\eqref{e5.1}.
   
We  now  state and prove the main result of this paper.  Let $\Psi$ and  $\a$ satisfy {\bf($\mathbf{\Psi1}$)}--{\bf($\mathbf{\Psi4}$)}, $m=1$, {\bf($\mathbf{\psi1}$)}--{\bf($\mathbf{\psi4}$)}, and {\bf($\mathbf{\a1}$)}-{\bf($\mathbf{\a5}$)} in Section~\ref{S:4}, and $f$ satisfy {\bf($\mathbf{f1}$)}-{\bf($\mathbf{f3}$)}.

\begin{theorem}\label{T:5.1}
Let $\{(u_{\ve},w_\ve) \}_{\ve>0}$ be a family of weak solutions of problem~\eqref{e5.1}. Then,  there exists a subsequence  $(u_{\ve_i}, w_{\ve_i})$, such that $u_{\ve_i}$  strongly  converges in $L^p(\Om\X(0,T))$, for all $1\le p<2$, to  a function $u\in L^2(\Omega\X(0,T))\cap L^2\left(0,T;H_0^1(\Omega)\right)$, $w_{\ve_i}$ weakly converges in 
$L^2(\Om\X(0,T))$ to  $w\in L^2(\Om\X(0,T))\cap H^1(0,T; H^{-1}(\Om))$, and the pair $(u,w)$ is a weak solution, in $\Om\X(0,T)$, of the  problem
\begin{equation}\label{e5.6}
\begin{aligned}
& w_t-\nabla\cdot \mv{q}= \mv{f}\left(x,t,u\right), \\
& w(x,t)\in \partial \mv{\Psi}\left(x,u(x,t)\right),\\
&\mv{q}\in\partial{\psi}_0\left (x,t,u(x,t), \nabla u(x,t)\right),\quad \text{for a.e. $(x,t)\in \Omega\X(0,T)$},\\
&w(x,0)=\mv w_0(x), \quad x\in \Omega,\quad u=0,\quad\text{in $\partial\Omega\X(0,T)$}.
\end{aligned}
\end{equation}
Here,  $\mv{\Psi}(x,\lambda), \ \mv f, \mv w_0$ are the mean values of $\Psi(\cdot,x,\lambda), f(\cdot,x,t,\l), w_0(\cdot,x)$ for each $(x,t,\lambda) \in  \Omega\X(0,T)\X\R$, and $\partial \mv{\Psi}$ represents the subdifferential of the function $\mv{\Psi}(x, \cdot)$ for each $x\in \Omega$. Also,  $\psi_0$ is defined by \eqref{e4.17}, and we mean by a weak solution of \eqref{e5.6} a pair  $(u,w)$ satisfying \eqref{e5.3}, \eqref{e5.4}, \eqref{e5.5}, with $u_\ve,w_\ve, \Psi, q_\ve,f_\ve,w_0$, replaced by $u,w,  \mv \Psi, \mv q,\mv f, \mv w_0$, with $\mv q$ satisfying the third relation in \eqref{e5.6}.
\end{theorem}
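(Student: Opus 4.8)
The plan is to combine the two tools of Section~\ref{S:4}: Theorem~\ref{T:4.1} disposes of the strictly convex nonlinearity $\Psi$, while Theorem~\ref{T:4.2} disposes of the monotone flux $\a$; the whole difficulty is to verify the energy hypothesis \eqref{eT4.2-2} needed to invoke the latter. First I would record the a priori bounds: testing the $\ve$-equation with $u_\ve$ and using the coercivity estimates \eqref{ePsi4}, \eqref{e4.15} together with the subquadratic growth \eqref{e5.2} of $f$ gives, as recalled in the Introduction, that $u_\ve$ is bounded in $L^2(0,T;H_0^1(\Om))$ and $w_\ve$ in $L^2(0,T;L^2(\Om))\cap H^1(0,T;H^{-1}(\Om))$, uniformly in $\ve$. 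Since $w_\ve\in\po\Psi(x/\ve,x,u_\ve)$, hypotheses (i)--(iii) of Theorem~\ref{T:4.1} hold, so along a subsequence $u_{\ve_i}\to u$ strongly in $L^p(\Om\X(0,T))$ for every $1\le p<2$, $w_{\ve_i}\wto w$ in $L^2(\Om\X(0,T))$, and $w\in\po\mv\Psi(x,u)$ a.e., which is already the second relation in \eqref{e5.6}. Passing to a further subsequence, Lemma~\ref{L:4.1} yields $u_1\in\mathcal V^2_{\pot}(\R^n;L^2(\Om\X(0,T)))$ with $\nabla u_{\ve_i}-\nabla u$ two-scale converging to $u_1$; and since the growth bound \eqref{e4.15'} makes $q_\ve$ bounded in $L^2(\Om\X(0,T))$, Theorem~\ref{T:3.2} provides, along a further subsequence, a two-scale limit $q\in\BB^2(\R^n;L^2(\Om\X(0,T)))$ of $q_{\ve_i}$.

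Next I would pass to the limit in the weak formulation \eqref{e5.5}. Using $w_{\ve_i}\wto w$, $w_0(x/\ve,x)\wto\mv w_0$, the two-scale convergence of $q_{\ve_i}$ tested against the nonoscillating $\nabla v$, and the strong convergence of $u_{\ve_i}$ (which, with \eqref{e5.2} and the continuity of $f$ in its last argument, yields $F_\ve\to\mv f(x,u)$ in the two-scale sense), one obtains that $(u,w)$ satisfies \eqref{e5.5} with $\mv q,\mv f,\mv w_0$ in place of $q_\ve,F_\ve,w_0$; this is the first relation together with the initial/boundary conditions of \eqref{e5.6}. At this stage I would also check that $q$ is solenoidal in the fast variable: testing the $\ve$-equation with $\ve\,\varphi(x,t)\sigma(x/\ve)$, $\sigma\in\AA$ with $\nabla\sigma\in\AA$, and letting $\ve\to0$ forces $\int_\KK q(z,x,t)\cdot\nabla\sigma(z)\,d\mm=0$, i.e. $q(\cdot,x,t)\in\BB^2_{\sol}$. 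Since $u_1(\cdot,x,t)\in\mathcal V^2_{\pot}$ has mean zero, the orthogonality \eqref{e2.4} gives $\int_\KK q\cdot u_1\,d\mm=0$, so the right-hand side of \eqref{eT4.2-2} collapses to $\int_0^T\!\int_{\Om\X(0,t)}\mv q\cdot\nabla u$.

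The heart of the proof, and the step I expect to be the main obstacle, is the energy identity \eqref{eT4.2-2}, which I would obtain from a microscopic/macroscopic energy balance. Testing the $\ve$-equation with $u_\ve$ over $\Om\X(0,t)$ and invoking the conjugate-function identity \eqref{e4.conj2} for $\Psi^*(x/\ve,x,\cdot)$ gives
\begin{equation*}
\int\limits_{\Om\X(0,t)}q_\ve\cdot\nabla u_\ve=\int\limits_{\Om\X(0,t)}F_\ve u_\ve-\int_\Om\Psi^*(\tfrac{x}{\ve},x,w_\ve(t))\,dx+\int_\Om\Psi^*(\tfrac{x}{\ve},x,w_0(\tfrac{x}{\ve},x))\,dx.
\end{equation*}
Integrating in $t$ and letting $\ve\to0$, the first term converges by the strong convergence of $u_{\ve_i}$; the conjugate-energy term I would handle by writing $\Psi^*(x/\ve,x,w_\ve)=u_\ve w_\ve-\Psi(x/\ve,x,u_\ve)$ and using \eqref{e4.6} and \eqref{e4.11} from the proof of Theorem~\ref{T:4.1}, together with $w\in\po\mv\Psi(x,u)$ and \eqref{e4.conj}, so that it converges to $\int_\Om\mv\Psi^*(x,w(t))\,dx$. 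Comparing the resulting limit with the macroscopic balance obtained by testing the homogenized equation with $u$ and applying \eqref{e4.conj2} for $\mv\Psi^*$ then yields \eqref{eT4.2-2}; the delicate point, which I regard as the crux, is the matching of the two initial conjugate-energy terms, resting on the limit of $\int_\Om\Psi^*(x/\ve,x,w_0(x/\ve,x))\,dx$ and the way it pairs with the macroscopic initial energy of $\mv w_0$.

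Finally, with \eqref{eT4.2-2} established, Theorem~\ref{T:4.2} identifies $q(z,x,t)=\a(z,x,t,u,\nabla u+u_1)$ and delivers $\mv q\in\po\psi_0(x,t,u,\nabla u)$, the third relation of \eqref{e5.6}. Collecting the relations proved along the way---the limiting weak formulation with $\mv q,\mv f,\mv w_0$, the subdifferential inclusion $w\in\po\mv\Psi(x,u)$, and $\mv q\in\po\psi_0$---shows that $(u,w)$ is a weak solution of \eqref{e5.6}, completing the proof.
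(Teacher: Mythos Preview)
Your overall strategy is exactly the paper's: a priori bounds, Theorem~\ref{T:4.1} for $\Psi$, two-scale limits for $q_\ve$ and $\nabla u_\ve$ via Lemma~\ref{L:4.1} and Theorem~\ref{T:3.2}, solenoidality of $q(\cdot,x,t)$ from the test function $\ve\varphi(x,t)\sigma(x/\ve)$, passage to the limit in the weak formulation, and finally Theorem~\ref{T:4.2}. You also correctly isolate the real difficulty, namely the energy identity \eqref{eT4.2-2}, and your derivation of \eqref{e5.14} from \eqref{e4.6}, \eqref{e4.11} and \eqref{e4.conj} is the same as the paper's.

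The gap is precisely where you flag it, and your proposed resolution does not work. The quantity $\lim_{\ve\to0}\int_\Om\Psi^*(x/\ve,x,w_0(x/\ve,x))\,dx$ equals $\int_\Om\int_\KK\Psi^*(z,x,w_0(z,x))\,d\mm\,dx$, whereas the macroscopic initial energy is $L_0:=\lim_{t\to0}\int_\Om\mv\Psi^*(x,w(x,t))\,dx$; these are \emph{different} numbers in general (already when $\Psi$ is independent of $z$ and $w_0$ oscillates). So subtracting the two energy balances the way you describe leaves a nonzero remainder and you never obtain \eqref{eT4.2-2}. A second, related issue is that the microscopic identity \eqref{e4.conj2} only yields the balance for a.e.\ $0<\tau<t<T$, not at $\tau=0$, so writing the initial term as $\int_\Om\Psi^*(x/\ve,x,w_0(x/\ve,x))\,dx$ is itself not justified.

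The paper avoids both problems by never evaluating the microscopic conjugate energy at a single time. It writes the balance on $(\tau,t)$, integrates in $t$ over $(\tau,T)$, and then \emph{averages in $\tau$ over $(0,h)$}; see \eqref{e5.18}. After this double integration the terms $\int_\Om\Psi^*(x/\ve,x,w_\ve(\cdot))\,dx$ appear only under a $d\tau$- or $dt$-integral, so the time-integrated convergence \eqref{e5.14} applies to \emph{both} of them. Sending $\ve\to0$ and then $h\to0$ (using \eqref{e5.20} to kill the error near $\tau=0$) produces on the right-hand side the quantity $L_0$ defined in \eqref{e5.17'}, which is exactly what appears when one tests the limit equation with $u$ as in \eqref{e5.17}. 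The two $L_0$'s cancel, yielding \eqref{e5.21}, and then your orthogonality observation $\int_\KK q\cdot u_1\,d\mm=0$ turns \eqref{e5.21} into \eqref{eT4.2-2}. In short: the missing idea is the $\tau$-averaging near the origin, which replaces the ill-behaved pointwise initial energy by a time-integrated one.
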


\begin{proof} By \eqref{ePsi3}, it follows that $|\Psi(z,x,\xi)|\le  c|\xi|^2+ |h(z,x)||\xi |+ |\Psi(z,x,0)|$, and from the definition of the conjugate $\Psi^*$ we have $\xi v\le \Psi^*(z,x,v)+\Psi(z,x,\xi)$, for any $\xi, v\in\R$. Choosing $\xi=v/(2c)$, we get $\Psi^*(z,x,v)\ge \frac{v^2}{2c}-|h(z,x)||v|-|\Psi(z,x,0)|\ge \frac{v^2}{4c}-c|h(z,x)|^2-|\Psi(z,x,0)|$. In particular, there exist constants 
$\gamma>0$, $\tilde \gamma\in\R$, such that,   we have
\begin{equation}\label{e5.7}
\int_{\Om} \Psi^*(\frac{x}{\ve},x, v(x))\,dx\ge \gamma \|v\|_{L^2(\Om)}^2+\tilde \gamma, \quad \text{for any function   $v\in L^2(\Om\X(0,T))$ and $\ve>0$}. 
\end{equation}
{}From \eqref{e5.5}, it follows
\begin{equation}\label{e5.8}
 \int_0^t \la \po_tw_\ve(s), v(s) \ra_{H^{-1},H_0^1}\,ds +\int\limits_{\Om\X(0,t)} \left(q_\ve\cdot \nabla v-f_\ve v\right)\, dx\,dt=0,
\end{equation}
for all $v\in H^1(0,T; L^2(\Om))\cap L^2(0,T; H_0^1(\Om))$, with $v(\cdot,s)=0$, $t\le s\le T$, a.e.\ in $\Om$. Clearly, \eqref{e5.8} extends to any  $v\in L^2(0,t; H_0^1(\Om))$, and so we may take $v(x,s)=u_\ve(x,s)$, $(x,s)\in\Om\X(0,t)$,  to obtain 
\begin{equation}\label{e5.9}
 \int_0^t \la \po_tw_\ve(s), u_\ve(s) \ra_{H^{-1},H_0^1}\,ds+\int\limits_{\Om\X(0,t)} \left(q_\ve\cdot \nabla u_\ve-f_\ve u_\ve\right)\, dx\,ds=0,\quad\text{for a.e.\ $t\in(0,T)$}.
\end{equation} 
Using \eqref{e4.conj2}, \eqref{e5.7}, \eqref{e4.15} and \eqref{e5.2}, we obtain 
\begin{equation}\label{e5.10}
\|w_\ve(t)\|_{L^2(\Om)}^2 +\int_0^t\int_{\Om}|\nabla u_\ve|^2\, dx\,ds\le C \left(1+ \int_0^t\int_\Om |u_\ve(x,s)|^{1+\s}\,dx\,ds\right),
\end{equation}
which, by the trivial inequality $|u|^{1+\s}\le \frac{1+\s}2\g|u|^2+\frac{1-\s}2 C(\g)$, together with  Poincar\'e inequality, choosing $\g>0$ sufficiently small, 
 implies the uniform bounds
\begin{equation}\label{e5.11}
\|w_\ve\|_{L^\infty(0,T; L^2(\Om))}\le c, \quad \|u_\ve\|_{L^2(0,T;H_0^1(\Om))}\le c,\quad\text{for some $c>0$ independent of $\ve>0$.}
\end{equation}
Therefore, from \eqref{e4.15} and \eqref{e5.2} we have
\begin{equation}\label{e5.12}
\|q_\ve\|_{L^2(\Om\X(0,T))}\le c,\quad \|f_\ve\|_{L^2(\Om\X(0,T))}\le c,\quad\text{for some $c>0$ independent of $\ve>0$},
\end{equation}
and from \eqref{e5.8} we also obtain
\begin{equation}\label{e5.12'}
\|w_\ve\|_{H^1(0,T;H^{-1}(\Om))}\le c, \quad  \text{for some $c>0$ independent of $\ve>0$.}
\end{equation}

We may then apply Theorem~\ref{T:4.1} to conclude that there exist a subsequence $\ve_i\downarrow0$ and functions $u\in L^2(0,T;H^1(\Om))$, $w\in L^2(\Om\X(0,T))\cap H^1(0,T;H^{-1}(\Om))$,  such that 
\begin{equation}\label{e5.13}
\begin{cases}
\text{$u_{\ve_i}$ strongly converges to $u$ in $L^p(\Om\X(0,T))$, for any $1\le p<2$}, \\
\text{ $w_{\ve_i}$ weakly converges to $w$ in $L^2(\Om\X(0,T))$, and}\\
\text{$w(x,t)\in \po\mv\Psi(x,t,u(x,t))$, a.e.\ in $\Om\X(0,T)$.} 
\end{cases}
\end{equation}

By passing to a further subsequence, if necessary, we may  assume that $q_{\ve_i}$ and $F_{\ve_i}$ two-scale converge to 
$$
q(z,x,t), F(z,x,t)\in\BB^2(\R^n; L^2(\Om\X(0,T))).
$$
Taking, in \eqref{e5.5},  $v=\ve\phi(x)\z(x/\ve)$, with $\phi\in C_0^\infty(\Om)$ and $\z\in\AA$, such that $\nabla_z\z\in\AA$, and letting $\ve\to0$,  we get
\begin{equation}\label{e5.13-0}
\int\limits_{\Om\X(0,T)}\int_\KK q(z,x,t)\cdot \nabla\z(z)\phi(x)\,d\mm(z)\,dx\,dt=0,
\end{equation}
and, so, we deduce that
\begin{equation}\label{e5.13-1}
q(\cdot,x,t)\in\Nu_{\sol}^2\quad\text{for a.e.\ $(x,t)\in\Om\X(0,T)$}.
\end{equation}
 
By the proof of Theorem~\ref{T:4.1}, we see that, given any subnet of $u_{\ve_i}$ generating a two-scale Young measure $\nu_{z,x,t}$, we must have $\nu_{z,x,t}=\d_{u(x,t)}$, for
$(d\mm(z)\,dx\,dt)$-a.e.\ $(z,x,t)\in\KK\X\Om\X(0,T)$. Therefore, from Theorem~\ref{T:3.2}, we deduce
$$
F(z,x,t)=f(z,x,t,u(x,t)),\quad \text{for $(d\mm(z)\,dx\,dt)$-a.e.\ $(z,x,t)\in\KK\X\Om\X(0,T)$}.
$$
In particular, $f_\ve\wto \mv f=\int_{\KK} f(z,\cdot,\cdot,u(\cdot,\cdot))\,d\mm(z)$, in $L^2(\Om\X(0,T))$.  But, from \eqref{e5.2}, we see that $f_{\ve_i}$ is uniformly bounded in $L^s(\Om\X(0,T))$, for some $s>2$, and so, by passing to a further subsequence, if necessary, still denoted $f_{\ve_i}$, we deduce that 
\begin{equation}\label{e5.13'}
f_{\ve_i}(\cdot,\cdot) \wto \mv f(\cdot,\cdot, u(\cdot,\cdot))\quad \text{in $L^s(\Om\X(0,T))$, for some $s>2$}.
\end{equation}

We now prove that \eqref{eT4.2-2} holds, in order to finish the proof, by applying Theorem~\ref{T:4.2}. We will drop the subscript from $\ve_i$ to simplify the notation.
 
{}From   $w_\epsilon \in \partial \Psi (x/\epsilon,x, u_\ve)$,  we have
$$
\int\limits_{\Omega\X(\tau,t)}\left( \Psi(x/\ve,x, u_\ve(x,s)) + \Psi^*(x/\ve,x,w_\ve(x,s)) \right) \,dx\,ds = \int\limits_{\Omega\X(\tau,t)}  u_\ve(x,s) w_\ve(x,s) \,dx\,ds, \quad\text{for a.e.\ $0<\tau<t<T$}.
$$
Taking the limit $\ve \rightarrow 0$ in the last equation, and applying \eqref{e5.13} we have
\begin{equation*}
\lim_{\ve \rightarrow  0} \int\limits_{\Omega\X(\tau,t)}    \Psi^*(x/\ve,x,w_\ve(x,s))\,dx\,ds  =    \int\limits_{\Omega\X(\tau,t)}  u(x,t) w(x,t)\,dx\,dt
  -  \int\limits_{\Omega\X(\tau,t)} \overline{\Psi} (x,u(x,s)) \,dx\,ds,
\end{equation*}
and therefore, again by \eqref{e5.13}, we conclude that
\begin{equation}\label{e5.14}
\lim_{\ve \rightarrow  0}\int\limits_{\Omega\X(\tau,t)} \Psi^*(x/\ve,x,w_\ve(x,s)) \,dx\,ds = \int_{\Omega\X(\tau,t)}  \overline{\Psi}^* (x, w(x,s))    \,dx\,ds.
\end{equation}
{}Further, from \eqref{e5.13}, we also have
\begin{equation}\label{e5.15}
\int_\tau^t\langle\partial_t w,u\rangle\,ds=\int_{\Omega}\big(
\overline{\Psi}^*\left(x, w(x,t)\right)-\overline{\Psi}^*\left(x,w(x,\tau)\right)\big)\,dx,
~~\text{for a.e. $\tau, t\in(0,T)$}.
\end{equation}
Also, taking the limit as $\ve\to0$ in \eqref{e5.8}, we get
\begin{equation}\label{e5.16}
 \int_0^t \la \po_tw(s), v(s) \ra_{H^{-1},H_0^1}\,ds +\int\limits_{\Om\X(0,t)} \left(\mv{q}\cdot \nabla v-\mv{f} v\right)\, dx\,dt=0,
\end{equation}
for all $v\in H^1(0,T; L^2(\Om))\cap L^2(0,T; H_0^1(\Om))$, with $v(\cdot,s)=0$, $t\le s\le T$, a.e.\ in $\Om$, which, again, can be extended to $v\in L^2(0,T;H_0^1(\Om))$, to give
\begin{equation}\label{e5.16'}
 \int_\tau^t \la \po_tw(s), v(s) \ra_{H^{-1},H_0^1}\,ds +\int\limits_{\Om\X(\tau,t)} \left(\mv{q}\cdot \nabla v-\mv{f} v\right)\, dx\,dt=0,\quad\text{for all $v\in L^2(0,T;H_0^1(\Om))$}.
\end{equation}
Since $u\in L^2\left([0,T];H^1_0(\Omega)\right)$, we take $u$  as a test function in \eqref{e5.16} to obtain
\begin{equation}\label{e5.17}
\int_{\Omega}\overline{\Psi}^*\left(x,w(x,t)\right)\,dx -L_0=
-\int\limits_{\Omega\X(0,t)}\bigg\{\overline{q}\cdot\nabla u -u\,\overline{f}\,\bigg\}\,dx\,ds, \quad \text{for a.e.\ $t\in[0,T]$},
\end{equation}
where, by $L_0$, we denote the following limit 
\begin{equation}\label{e5.17'}
L_0:=\lim_{t\to0}\int_{\Om} \overline{\Psi}^*\left(x, w(x,t)\right)\,dx,
\end{equation}
whose existence is guaranteed by the continuity in $\tau=0$ of the  right-hand side of  \eqref{e5.16'}.

We can rewrite \eqref{e5.9} as
\begin{equation}\label{e5.9'}
 \int_\tau^t \la \po_tw_\ve(s), u_\ve(s) \ra_{H^{-1},H_0^1}\,ds+\int\limits_{\Om\X(\tau,t)} \left(q_\ve\cdot \nabla u_\ve-f_\ve u_\ve\right)\, dx\,ds=0,\quad\text{for a.e.\ $0<\tau<t<T$},
\end{equation} 
which gives, by \eqref{e4.conj2}, 
\begin{multline}\label{e5.9''}
 \int\limits_{\Om}\left(\Psi^*(\frac{x}{\ve},x,u_\ve(x,t))-\Psi^*(\frac{x}{\ve},x,u_\ve(x,\tau)\right) \,dx=-\int\limits_{\Om\X(\tau,t)} \left(q_\ve\cdot \nabla u_\ve-f_\ve u_\ve\right)\, dx\,ds, \\
  \text{for a.e.\ $0<\tau<t<T$},
\end{multline} 
We then integrate \eqref{e5.9''} in $t$, from $\tau$ to $T$, and in $\tau$, from $0$ to $h$,  
\begin{eqnarray}\label{e5.18}
&&-\frac1h\int_0^h\,d\tau\int_\tau^T\bigg(\int\limits_{\Omega\X(\tau,t)}q_\ve\cdot\nabla u_\ve\,dx\,ds\bigg)\,dt=
\frac1h\int_0^h\,d\tau\int\limits_{\Omega\X(\tau,T)}\bigg(\Psi^*\left(\frac{x}{\ve},x,w_\ve(x,t)\right)
-\Psi^*\left(\frac{x}{\ve},x, w(\frac{x}{\ve},x,\tau)\right)\bigg)\,dx\,dt\nonumber\\
&&\qquad\qquad\qquad\qquad-\frac1h\int_0^h\,d\tau\int_\tau^T\int_{\Omega\X(\tau,t)}u_\ve\,f_\ve\,\,dx\,ds\,dt.
\end{eqnarray}
Taking $\ve\to0$ in \eqref{e5.18}, using \eqref{e5.14}, \eqref{e5.13'}, and then taking $h\to0$, we get
\begin{equation}\label{e5.19}
-\lim_{\ve\to 0}\int_0^T\bigg(\int\limits_{\Omega\X(0,t)}q_\ve\cdot\nabla u_\ve\,dx\,ds\bigg)\,dt=
\int\limits_{\Omega\X(0,T)}\overline{\Psi}^*\left(x, w(x,t)\right)\,dx\,dt - L_0
-\int_0^T\bigg(\int\limits_{\Omega\X(0,t)}u\,\overline{f}\,dx\,ds\bigg)\,dt
\end{equation}
where we have used  that 
\begin{equation}\label{e5.20}
\lim_{h\to0}\frac1h\int_0^h\sup_{\ve>0}\left(\int_0^\tau\int\limits_{\Omega\X(t,\tau)}|q_\ve\cdot\nabla u_\ve|\,dx\,ds\,dt+\int_0^T\int\limits_{\Omega\X(0,\tau)}|q_\ve\cdot\nabla u_\ve|\,dx\,ds\,dt\right)\,d\tau =0,
\end{equation}
 by \eqref{e5.11} and \eqref{e5.12}.
Now, using  \eqref{e5.17} in the right-hand side of  \eqref{e5.19}, we  get
\begin{equation}\label{e5.21}
\lim_{\ve\to 0}\int_0^T\bigg(\int\limits_{\Omega\X(0,t)}q_\ve\cdot\nabla u_\ve\,dx\,ds\bigg)\,dt=\int_0^T\int\limits_{\Omega\X(0,t)}\overline{q}\cdot\nabla u \,dx\,ds\,dt.
\end{equation}
Finally, applying Lemma~\ref{L:4.1}, \eqref{e5.13-1} and \eqref{e2.4},  we conclude
\begin{equation}\label{result5}
\lim_{\ve\to 0}\int_0^T\bigg(\int\limits_{\Omega\X(0,t)}q_\ve\cdot\nabla u_\ve\,dx\,ds\bigg)\,dt=
\int_0^T\int\limits_{\Omega\X(0,t)}\int_{\KK}q(z,x,t)\cdot\big(\nabla u+u_1(z,x,t)\big)\,d\mm(z)\,dx\,ds\,dt.
\end{equation}
We can then apply Theorem~\ref{T:4.2} to conclude the proof.

\end{proof}

\section{Single nonlinearity, Kirchhoff transformation, and uniqueness.}\label{S:6}

Following  \cite{MR2349874}, in this section we briefly comment on the simpler case where $\a(z,x,t,u,\eta)$ is linear in the vector variable $\eta$, that is,
\begin{equation}\label{e6.1}
\a(z,x,t,u,\eta)=K(z,x,t,u)\cdot\eta, \quad\text{for all $(u,\eta)\in\R\X\R^n$, and a.e.\ $(z,x,t)\in\R^n\X\Om\X(0,T)$},
\end{equation}
 where $K$ is a (possibly non-symmetric) positive-definite $n\X n$ matrix-valued function, so that there exist $k_0>0,\  k_1>0$ such that
 \begin{equation}\label{e6.1'}
k_0|\xi|^2\le  \left[K(z,x,t,u)\cdot\xi\right]\cdot\xi\le k_1|\xi|^2,\quad\text{for all $\xi\in\R^n$, $u\in\R$, and a.e.\ $(z,x,t)\in\R^n\X\Om\X(0,T)$},
 \end{equation} 
 with entries $k_{ij}$ such that   $k_{ij}(z,x,t,\cdot)$ is continuous in $\R$, for a.e.\ $(z,x,t)\in\R^n\X\Om\X(0,T)$,  $t\mapsto k_{ij}(\cdot,\cdot,t,u)\in L^2(0,T;\BB^2(\R^n;L^2(\Om)))$,  and $k_{ij}(\cdot,\cdot,t,u)$ is a regular test function, for a.e.\ $t\in(0,T)$,  for all $u\in\R$.   
 Besides, consistently with {\bf($\mathbf{\a5}$)}, we ask that there exists $c_k>0$ and $\s\in(0,1)$, such that
 \begin{equation}\label{e6.2}
 |k_{ij}(z,x,t,u_1)-k_{ij}(z,x,t,u_2)|\le c_k|u_1-u_2|^\s, \quad\text{for all $u_,u_2\in\R$, and a.e.\ $(z,x,t)\in\R^n\X\Om\X(0,T)$}.
  \end{equation}
 In this case, the homogenization of the factor $\nabla\cdot\a(z,x,t,u,-\nabla u)$ in \eqref{e5.1} may be performed by using the classical approach for linear elliptic and parabolic equations (see, e.g.,   \cite{MR1987520}). Following the notation in  the proof of Theorem~\ref{T:4.2}, if $q(z,x,t)$ is the two-scale limit of 
 $q_\ve(x,t)=\a(x/\ve,x,t,u_\ve,\nabla u_\ve)$,   by the proof of that theorem we obtain
 \begin{align}
 &q(z,x,t)= K(z,x,t,u(x,t))\cdot(\nabla_x u(x,t)+\nabla_z V(z,x,t)),\label{e6.3} \\ 
 &\qquad \text{for ($\,d\mm(z)\,dx\,dt$)-a.e.\ $(z,x,t)\in\KK\X\Om\X(0,T)$},\nonumber\\
 &\int_{\KK}\left[K(z,x,t,u(x,t))\cdot(\nabla_x u(x,t)+\nabla_z V(z,x,t))\right]\cdot\nabla_z\varphi(z) \,d\mm(z)=0,\label{e6.4}\\
 &\qquad\text{for a.e.\ $(x,t)\in\Om\X(0,T)$,  for all $\varphi\in\AA$, such that $\partial_k \varphi\in\AA$, $k=1,\cdots,n$},\nonumber
 \end{align}
 where we now write $u_1(z,x,t)=\nabla_z V(z,x,t)$, for some $V\in \BB^2(\R^n;L^2(\Om\X(0,T))$,  which we may do, since $u_1(\cdot,x,t)\in\Nu_{\pot}^2$, for a.e.\ $(x,t)\in\Om\X(0,T)$.  

More precisely, using the identification of $\BB^2$ with $L^2(\KK;d\mm(z))$, we denote by $H_0^1(\KK)$ the completion of the space of 
$$
\AA_1:= \{\varphi\in\AA\,:\,  \partial_k\varphi\in\AA,\  k=1,\cdots,n,\ \mv\varphi=0\}, 
$$
with respect to the metric given by the inner product defined by
$$
\la \varphi, \tilde \varphi\ra_1:=\int_{\KK} \nabla \varphi (z)\cdot  \nabla\tilde\varphi(z)\,d\mm(z).
$$
 Let $H^{-1}(\KK)$ denote the dual of $H_0^1(\KK)$, and let $H(\KK):= \{\varphi\in L^2(\KK)\,:\,\mv \varphi=0\}$, and $H(\KK;\R^n)=H(\KK)
 \overset{\text{$n$ times}}{\X\cdots\X} H(\KK)$. For $\vec \varphi\in H(\KK;\R^n)$, we may define $\nabla\cdot\vec\varphi\in H^{-1}(\KK)$ by
 $$
 \la \nabla\cdot\vec\varphi,\psi\ra:=\la \vec\varphi,\nabla\psi\ra_0:=\int_{\KK}\vec\varphi(z)\cdot\nabla \psi(z)\,d\mm(z),\quad\text{for all $\psi\in H_0^1(\KK)$}.
 $$
 Also, by Riesz representation, given any $\xi\in H^{-1}(\KK)$, we may find $v\in H_0^1(\KK)$, such that
 $$
 \la v, \varphi\ra_1=\la \xi,\varphi\ra,\quad\text{ for all $\varphi\in H_0^1(\KK)$},
 $$
 and we denote $v:=\Delta^{-1}\xi$. In this way, we can define 
 $$
 V(z,x,t):=\Delta^{-1} (\nabla\cdot u_1).
 $$ 
 The homogenized operator associated with the factor $\nabla\cdot\a(x/\ve,x,t,u_\ve,-\nabla u_\ve)$ in \eqref{e5.1}, when $\a$ is given by \eqref{e6.1}, is then obtained from 
 \eqref{e6.4}, as long as we obtain a representation for $\nabla_zV(z,x,t)$ in terms of $\nabla_xu$, where $u$ is the weak limit of $u_\ve$ in $L^2(0,T;H_0^1(\Om))$. As usual, this is achieved  by writing 
 $$
 V(z,x,t)= W_1(z,x,t)\frac{\partial u}{\partial x_1}+\cdots +W_n(z,x,t)\frac{\partial u}{\partial x_n},
 $$
 where $W_i(z,x,t)\in H_0^1(\KK)$, for a.e.\ $(x,t)\in\Om\X(0,T)$, is the solution of 
 \begin{equation}\label{e6.5}
\int_{\KK}\left[K(z,x,t,u(x,t))\cdot\nabla_z W_i(z,x,t))\right]\cdot\nabla_z\varphi(z) \,d\mm(z)=-\int_{\KK}\left[K(z,x,t,u(x,t))\cdot e_i\right]\cdot\nabla_z\varphi(z) \,d\mm(z), 
\end{equation}
for all $\varphi\in H_0^1(\KK)$, $i=1,\cdots,n$, where $e_i$ is the $i$-th element of the canonical basis.  The existence and uniqueness of $W_i(\cdot,x,t)$ is guaranteed by Lax-Milgram theorem (see, e.g., \cite{MR2597943}). Therefore, we obtain the homogenized operator
\begin{equation}\label{e6.6}
\mv q(x,t)= K_0(x,t,u(x,t))\cdot\nabla_x u,
\end{equation}
where $K_0(x,t,u)$ is the matrix-valued positive-definite function whose entries are given by
\begin{equation}\label{e6.7}
\begin{aligned}
k_{0\,ij}(x,t,u)&:= \int_{\KK} e_i\cdot\left[ K(z,x,t,u)\cdot\left(e_j+\nabla_zW_j(z,x,t)\right)\right]\,d\mm(z)  \\
&=\int_{\KK} \left(e_i+\nabla_z W_i(z,x,t)\right)\cdot\left[K(z,x,t,u)\cdot\left(e_j+\nabla_zW_j(z,x,t)\right)\right]\,d\mm(z)\\
&=\int_{\KK} \left(\d_{il}+\partial_l W_i(z,x,t)\right)k_{lm }(z,x,t,u)\left(\d_{jm}+\partial_m W_j(z,x,t)\right)\,d\mm(z),
\end{aligned}
\end{equation}
where we use the summation convention. Observe that $K_0$ also satisfies an ellipticity condition like \eqref{e6.1'}. 

Let us now consider the particular case where
\begin{equation}\label{e6.8}
K(z,x,t,u)=G(z,x) h(u), \quad\text{so that}\quad k_{ij}(z,x,t,u)=g_{ij}(z,x) h(u),
\end{equation}
with $G=(g_{ij})$ a positive-definite matrix-valued function, say, for some $0<\gamma_0<\gamma_1<\infty$,
\begin{equation}\label{e6.8'}
\gamma_0|\xi|^2\le [G(z,x)\cdot\xi]\cdot\xi\le \gamma_1|\xi|^2,\quad\text{for all $\xi\in\R^n$, a.e.\ $(z,x)\in\R^n\X\Om$},
\end{equation}
 and $h:\R\to\R$ is a Borelian function, with $h(u)>0$, for a.e.\ $u\in\R$. It has a special interest since it includes both the classical Stefan model (see, e.g.,  \cite{MR613313})  and the classical porous medium equation (see, e.g., \cite{MR877986}), and the problem~\eqref{e5.1}, in this case, is well posed, as we will show now, considering just the situation where $f\equiv0$, for simplicity.

In this case, we may use the so called Kirchhoff transformation, defined as
$$
V:=H(u)=\int_0^uh(\s)\,d\s.
$$
Through this change of dependent variable we have
$$
q(z,x,t)=G(z,x)\cdot \nabla_xV.
$$
 The first equation in \eqref{e5.1}  then reads
 \begin{equation}\label{e6.9}
 \frac{\partial w}{\partial t}-\nabla\cdot \left[G(\frac{x}{\ve},x)\cdot\nabla_x V\right]=0
 \end{equation}
 while the second relation in \eqref{e5.1} becomes
 \begin{equation}\label{e6.10}
 w(x,t)\in \partial \tilde\Psi(\frac{x}{\ve},x,V(x,t)),\quad\text{a.e.\ in $\Om\X(0,T)$},
 \end{equation}
 and $\tilde \Psi(z,x,V)$ is the convex function (defined up to a constant) whose subdifferential is $\partial \Psi(z,x,H^{-1}(V))$, which is strictly convex whenever $\Psi$ is.  These relations are then complemented by the corresponding initial-boundary conditions 
\begin{equation}\label{e6.11}
w(x,0)= w_0(\frac{x}{\ve},x),\qquad V(x,t)=0,\ \text{for $(x,t)\in\po\Om\X(0,T)$}.
\end{equation} 
 The homogenized problem is then obtained through the argument just explained leading to \eqref{e6.7}. We then get a homogenized equation of the form
  \begin{equation}\label{e6.12}
 \frac{\partial w}{\partial t}-\nabla\cdot \left[G_0(x)\cdot\nabla_x V\right]=0
 \end{equation}
 with
 \begin{equation}\label{e6.13}
 w(x,t)\in \partial \mv{\tilde\Psi}(x,V(x,t)), \quad\text{a.e.\ in $\Om\X(0,T)$},
 \end{equation}
and initial-boundary conditions 
\begin{equation}\label{e6.14}
w(x,0)=\bar w_0(x),\qquad V(x,t)=0,\ \text{for $(x,t)\in\po\Om\X(0,T)$}.
\end{equation}
As we have anticipated above, an important feature about both problems, \eqref{e6.9}--\eqref{e6.11}, and \eqref{e6.12}--\eqref{e6.14}, is that they are well posed, that is, the weak solution in the sense of Definition~\ref{D:5.1} is unique.  This is a well known fact, but we  give here  a simple proof of that, for the sake of completeness of our discussion, and also
because we miss a precise reference for this specific case. We do that for problem \eqref{e6.12}--\eqref{e6.14}, which, of course, also applies to problem \eqref{e6.9}--\eqref{e6.11}. 
For simplicity, we will assume that $G_0(x)$ is symmetric.
 
Let us consider the bilinear form $a:H_0^1(\Om)\X H_0^1(\Om)\to\R$, given by
$$
a(u,v):=\la G_0(x)\cdot\nabla u,\nabla v\ra_{L^2(\Om)},
$$
and let $A: H_0^1(\Om)\to H^{-1}(\Om)$ be the corresponding operator, i.e., 
$$
Au(v):= a(u,v).
$$
 {}From \eqref{e6.8'} we easily deduce that $A$ is an isomorphism. Now, suppose $(w_1,V_1)$ and $(w_2,V_2)$ are two weak solutions of \eqref{e6.12}--\eqref{e6.14},
 and set $w=w_1-w_2$, $V=V_1-V_2$. By the Definition~\ref{D:5.1} we obtain that
 \begin{equation}\label{e6.15}
 \int_0^t\la \frac{\partial w}{\partial t}(s),v(s)\ra_{H^{-1},H_0^1}\,ds+\int_0^t\int_{\Om} [G_0(x)\cdot\nabla V(x,s)]\cdot\nabla v(x,s)\,dx\,ds=0,
 \end{equation}   
 for all $v\in L^2(0,T;H_0^1(\Om))$, or, equivalently,
 \begin{equation}\label{e6.16}
 \frac{\po w}{\po t} =- A V,\quad\text{in $L^2(0,T; H^{-1}(\Om))$}.
 \end{equation}
By applying $A^{-1}$ to both sides, we get
 \begin{equation}\label{e6.16'}
A^{-1} \frac{\po w}{\po t} =-  V,\quad\text{in $L^2(0,T; H_0^1(\Om))$}.
 \end{equation} 
 Therefore, we have
 $$
\int_0^t a\left(A^{-1} \frac{\po w}{\po t}(s), A^{-1}w(s)\right)\,ds=-\int_0^t a(V(s),A^{-1}w(s))\,ds,
$$
and so, since $a$ is symmetric,
 \begin{equation}\label{e6.17}
\int_0^t \frac{\po}{\po t}\left(\int_{\Om}[G(x)\cdot\nabla A^{-1}w(x,s)]\cdot( \nabla A^{-1}w(x,s))\,dx\right)\,ds=-\int_0^t \la V(s), w(s)\ra_{L^2(\Om)}\,ds.
\end{equation}
Now, $\nabla A^{-1}w\in C([0,T); L^2(\Om))$ and $\nabla A^{-1}w(0)=0$, so \eqref{e6.17} implies
 \begin{equation}\label{e6.18}
\int_{\Om}[G(x)\cdot\nabla A^{-1}w(x,t)]\cdot( \nabla A^{-1}w(x,t))\,dx=-\int_0^t \la V(s), w(s)\ra_{L^2(\Om)}\,ds,\quad\text{for a.e.\ $t\in(0,T)$}.
\end{equation}
But $w(x,s)=w_1(x,s)-w_2(x,s)\in \po\mv{\tilde\Psi}(x,V_1(x,s))-\po\mv{\tilde\Psi}(x,V_2(x,s))$, for a.e.\ $(x,s)\in\Om\X(0,T)$, therefore $V(x,s)w(x,s)\ge0$, for a.e.\ $(x,s)\in\Om\X(0,T)$, by monotonicity. Using also 
\eqref{e6.8'}, we then obtain from \eqref{e6.18}
\begin{equation}\label{e6.19}
\int_{\Om}|\nabla A^{-1} w(x,t)|^2\,dx\le 0, \quad\text{for a.e.\ $t\in(0,T)$},
\end{equation}
and since $A^{-1}w\in L^2(0,T;H_0^1(\Om))$,  we get that $w=0$, a.e.\ in $\Om\X(0,T)$, that is $w_1=w_2$, and, hence, $V_1=V_2$, as desired.

\section*{Acknowledgements}

H.~Frid gratefully acknowledges the support from CNPq, through grant proc.~303950/2009-9, and FAPERJ, through grant E-26/103.019/2011.



\bibliographystyle{plain}
\bibliography{referenceFSV1}

\end{document}